\numberwithin{equation}{section}
\newcommand{\beq}{\begin{equation}}
\newcommand{\eeq}{\end{equation}}
\newcommand{\beqs}{\begin{eqnarray*}}
\newcommand{\eeqs}{\end{eqnarray*}}
\newcommand{\beqn}{\begin{eqnarray}}
\newcommand{\eeqn}{\end{eqnarray}}
\newcommand{\beqa}{\begin{array}}
\newcommand{\eeqa}{\end{array}}
  \newcommand{\cE}{{\mathcal E}}
  \newcommand{\F}{{\mathcal F}}
\newcommand{\cH}{{\mathcal H}}
  \newcommand{\cM}{{\mathcal M}}
\newcommand{\sq}{{\sqrt{-1}}}
\newcommand{\psh}{\mathcal{PSH}}
\newcommand{\psho}{\mathcal{PSH}_0}
\newcommand{\pshk}{\mathcal{PSH}_k}
\newcommand{\pshko}{\mathcal{PSH}_{k,0}}
\newcommand{\ca}{\text{Cap}}
\newcommand{\bS}{{\mathbb S}}
\newcommand{\C}{{\mathbb C}}
\newcommand{\R}{{\mathbb R}}
\newcommand{\p}{\partial}
\newcommand{\eps}{\varepsilon}
\newcommand{\lam}{\lambda}
\newcommand{\Om}{\Omega}
\newcommand{\pom}{{\p\Om}}
\newcommand{\hu}{\hat u}
\newcommand\tbbint{{-\mkern -16mu\int}}  \newcommand\dbbint{{-\mkern -19mu\int}}   \newcommand\bbint{ {\mathchoice{\dbbint}{\tbbint}{\tbbint}{\tbbint}} }
\newtheorem{prop}{Proposition}[section]
\newtheorem{theo}[prop]{Theorem}
\newtheorem{lem}[prop]{Lemma}
\newtheorem{cor}[prop]{Corollary}
\newtheorem{rem}[prop]{Remark}
\newtheorem{ex}[prop]{Example}
\newtheorem{defi}[prop]{Definition}
\begin{document}

\title{Trace Inequalities, isocapacitary inequalities and regularity of the complex Hessian equations}
\author{Jiaxiang $\text{Wang}^1$, Bin $\text{Zhou}^{2}$}

\address{Jiaxiang Wang: 
Beijing International Center for Mathematical Research, Peking University,
Beijing, 100871, P. R. China}
\email{wangjx\underline{ }manifold@126.com}

\address{Bin Zhou:
	School of Mathematical Sciences, Peking
	University, Beijing 100871, China.} 
\email{bzhou@pku.edu.cn}

\thanks{$1$ Partially  supported by China post-doctoral grant BX2021015.}
\thanks {$2$ Partially supported by National Key R$\&$D Program of China SQ2020YFA0712800 and NSFC  Grant 11822101.}

\keywords{Complex Monge-Amp\`ere equations, plurisubharmonic functions, Sobolev inequality, Moser-Trudinger inequality.}

\begin{abstract}
In this paper, we study the relations between trace inequalities(Sobolev and Moser-Trudinger type), 
isocapacitary inequalities and the regularity of the complex Hessian and Monge-Amp\`ere equations with 
respect to a general positive Borel measure. We obtain a quantitative characterization for these relations through properties of
the capacity minimizing functions.
\end{abstract}

 \maketitle

%\bibliographystyle{plain}
%\tableofcontents

\baselineskip=16.4pt
\parskip=3pt

\section{introduction}

Sobolev and Moser-Trudinger  type inequalities play an important role in both PDE and geometry. On one hand, these inequalities are widely used in the study of existence and regularity of solutions to partial differential equations. On the other hand, people also discovered that they are equivalent to isoperimetric and isocapacitary inequalities \cite{Ma}. Despite the classical Sobolev and Moser-Trudinger inequalities, the analogous  inequalities for a series of fully nonlinear equations with variational structure have been developed, including both real and complex Hessian equations  \cite{W1, TrW, TiW, AC20}.  In particular, the Moser-Trudinger type inequality for the complex Monge-Amp\`ere equations has been established \cite{BB, C19, GKY, AC19,  WWZ1}. 
The Moser-Trudinger type inequality can also be related to the Skoda integrability of plurisubharmonic functions \cite{DNS, DN, Ka, DMV}.
% with respect to the complex Monge-Amp\`ere measures of the potential functions with certain properties.
In this paper, we study the relations between trace inequalities(Sobolev and Moser-Trudinger type), isocapacitary inequalities and the regularity of the complex Hessian and Monge-Amp\`ere equations with 
respect to a general nonnegative Borel measure $\mu$. Our results generalize the classical trace inequalities \cite{AH}.  Here `trace' refers to that $\mu$ lives on a domain $\Omega$  and is a surface measure on a smooth submanifold in $\Om$. The trace and isocapacitary inequalities for the real Hessian equations were obtained by \cite{XZ}.

Let $\Om\subset \C^{n}$ be a pseudoconvex domain with smooth boundary $\p \Om$ and $\omega$ be the K\"ahler form associated to the standard Euclidean metric.  
Let $\pshk(\Om)$ be the $k$-plurisubharmonic functions on $\Om$ and $\pshko(\Om)$ be the set of functions in $\pshko(\Om)$
with vanishing boundary value. Let $\mathcal F_k(\Omega)$ be set of $k$-plurisubharmonic functions which can be decreasingly approximated by functions in $\pshko(\Om)\cap C(\overline{\Om})\cap C^2(\Om)$ \cite{C06}. For $u\in\mathcal F_k(\Omega)$,
denote the $k$-Hessian energy by
$$\mathcal E_k(u)=\int_{\Om}(-u)(dd^cu)^k\wedge\omega^{n-k}$$
and the norm by
$$\|u\|_{\pshko(\Om)}:=\left(\int_{\Om}(-u)(dd^cu)^k\wedge\omega^{n-k}\right)^{\frac{1}{k+1}}.$$
In particular, when $k=n$, we write $\|u\|_{\psho(\Om)}=\|u\|_{\mathcal{PSH}_{n,0}(\Om)}$ for simplicity.

Let $\mu$ be a nonnegative Borel measure with  finite mass  on $\Omega \subset \C^n$.
We will consider the  trace inequalities and 
isocapacitary inequalities with respect to $\mu$ as in the classical case \cite{AH}. We recall the capacity for plurisubharmonic functions.
The relative capacity for plurisubharmonic functions was introduced by Bedford-Taylor \cite{BT, B}.
For a Borel subset $E\subset \Om$, the {\it $k$-capacity} is defined as
$$\ca_k(E,\Om)=\sup\left\{\left.\int_{E}(dd^cv)^k\wedge\omega^{n-k}\,\right|\, v\in\pshk(\Om),-1\le v\le 0\right\}.$$
Throughout the context, we will use $|\cdot|$ to denote the Lebesgue measure of a Borel subset.
We follow \cite{Ma} to define the {\it capacity minimizing function} with respect to $\mu$
$$\nu_k(s,\Omega,\mu):=\inf\left\{\left.\ca_k(K,\Om)\,\right|\,\mu(K)\ge s, K\Subset\Om\right\},\ \ \ 0<s<\mu(\Om).$$
Then we denote
\begin{align}\label{cond-sobolev0}
	I_{k, p}(\Omega,\mu):=\begin{cases}
	\int_0^{\mu(\Om)}\left(\frac{s}{\nu_k(s,\Omega,\mu)}\right)^{\frac{p}{k+1-p}}\,ds, & 0<p<k+1,\\[5pt]
	 \displaystyle\sup_{t>0}\left\{\frac{t}{\nu_k(t,\Omega,\mu)^{\frac{p}{k+1}}}\right\}, &  p\geq k+1,
\end{cases}
	\end{align}
	and
	\begin{align}\label{cond-MT}
I_{n}(\beta,\Omega,\mu):=\sup\left\{s\exp\left(\frac{\beta}{\nu_n(s,\Omega,\mu)^{\frac{q}{n+1}}}\right):
\ 0<s<\mu(\Om)\right\}.
\end{align}
Note that if $\mu(P)>0$ for a $k$-pluripolar subset $P\subset \Om$, then $I_{k, p}(\Omega,\mu)$, $I_{n}(\beta,\Omega,\mu)=+\infty$. Therefore, we only need to consider those measures which charge no mass on pluripolar subsets.
The main result of this paper is as follows.

\begin{theo}\label{main3}
	Suppose $\Om\subset\C^n$ is a smooth, $k$-pseudoconvex domain, where $1\le k\le n$. 
 \begin{enumerate}
\item[(i)] The Sobolev type trace inequality   
	\begin{align}\label{sobolev1}
	\sup\left\{\frac{\|u\|_{L^p(\Om,\mu)}}{\|u\|_{\pshko(\Om)}}: u\in \mathcal F_k(\Omega), 0<\|u\|_{\pshko(\Om)}<\infty\right\}<+\infty
	\end{align}
	holds if and only if $I_{k, p}(\Omega,\mu)<+\infty$. Moreover,  the Sobolev trace map 
$$Id:\,\F_k(\Om)\hookrightarrow L^p(\Om,\mu), \ 1<p<\infty$$ is a compact embedding if and only if
\begin{align}\label{cond-compact}
\lim_{s\to 0}\frac{s}{\nu_k(s,\Omega,\mu)^{\frac{p}{k+1}}}\to 0.
\end{align}

\item[(ii)] When $k=n$, for $q\in [1,\frac{n+1}{n}]$ and $\beta>0$, the Moser-Trudinger type trace inequality 
\begin{align}\label{MT}
\sup\left\{\int_{\Om}\exp\left(\beta\left(\frac{-u}{\|u\|_{\psho(\Om)}}\right)^{q}\right)\,d\mu: u\in \mathcal F_n(\Omega), 0<\|u\|_{\psho(\Om)}<\infty\right\}
\end{align}
holds if and only if $I_{n}(\beta,\Omega,\mu)<+\infty$.

\end{enumerate}
\end{theo}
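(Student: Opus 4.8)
The plan is to follow the classical strategy of Maz'ya, adapted to the nonlinear potential theory of $k$-plurisubharmonic functions, establishing the equivalence in two directions and treating the Sobolev and Moser-Trudinger cases in parallel where possible.

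\textbf{From the inequality to the isocapacitary condition.} Suppose \eqref{sobolev1} holds with constant $C$. Fix a compact $K\Subset\Om$ with $\mu(K)\ge s$. For any competitor $v\in\pshk(\Om)$ with $-1\le v\le 0$ entering the definition of $\ca_k(K,\Om)$, one passes to the relative extremal function $u_{K}:=\sup\{v\in\pshk(\Om):v\le -1 \text{ on }K,\ v\le 0\}^{*}$, which lies in $\F_k(\Om)$, satisfies $-1\le u_K\le 0$, equals $-1$ on the (quasi-)interior of $K$, and whose Monge-Amp\`ere/Hessian mass is exactly $\ca_k(K,\Om)$, i.e. $\mathcal E_k(u_K)=\int_\Om(dd^cu_K)^k\wedge\omega^{n-k}=\ca_k(K,\Om)$ by the standard Bedford-Taylor theory. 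Then $\|u_K\|_{L^p(\Om,\mu)}^p\ge\mu(K)\ge s$ while $\|u_K\|_{\pshko(\Om)}^{k+1}=\ca_k(K,\Om)$, so the inequality gives $s^{1/p}\le C\,\ca_k(K,\Om)^{1/(k+1)}$, hence $s/\ca_k(K,\Om)^{p/(k+1)}\le C^p$. Taking the infimum over such $K$ yields $s/\nu_k(s,\Om,\mu)^{p/(k+1)}\le C^p$ for all $s$; in the regime $p\ge k+1$ this is precisely the finiteness of the supremum defining $I_{k,p}$, and in the regime $p<k+1$ one upgrades the pointwise bound $\nu_k(s)\ge (s/C^p)^{(k+1)/p}$ and integrates to control $\int_0^{\mu(\Om)}(s/\nu_k(s))^{p/(k+1-p)}ds$ — here one must be slightly more careful and actually use a layer-cake/truncation argument on the sets $\{-u>2^j\}$ rather than a single $K$, exactly as in the real case \cite{XZ, Ma}, to extract the integral condition rather than merely the sup. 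The Moser-Trudinger case (ii) is identical with $u_K$ tested against the exponential: $\int_\Om\exp(\beta(-u_K/\ca_k(K)^{1/(n+1)})^q)d\mu\ge s\exp(\beta/\ca_k(K)^{q/(n+1)})$, giving $I_n(\beta,\Om,\mu)<\infty$.

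\textbf{From the isocapacitary condition to the inequality.} This is the substantive direction. Given $u\in\F_k(\Om)$, normalize $\|u\|_{\pshko(\Om)}=1$ and set $K_t:=\{-u\ge t\}$ for $t>0$; these are compact in $\Om$ (vanishing boundary values) and $k$-regular. The key nonlinear estimate is the Choquet/Cegrell-type capacity bound
\begin{equation*}
\ca_k(K_t,\Om)\le \frac{1}{t^{k+1}}\int_{\{-u\ge t\}}(-u)(dd^cu)^k\wedge\omega^{n-k}\le\frac{\mathcal E_k(u)}{t^{k+1}}=\frac{1}{t^{k+1}},
\end{equation*}
together with the more refined comparison that localizes the energy: if one writes $u_{(t)}:=\max(u,-t)/t+$ (suitable normalization) one gets $\ca_k(\{-u\ge 2t\},\Om)\le t^{-k}\int_{\{-u\ge t\}}(dd^c u)^k\wedge\omega^{n-k}$, and then $\sum_j 2^{jk}t^{-k}\cdot(\text{mass on annuli})$ telescopes against $\mathcal E_k(u)$. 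Writing the $\mu$-integral via the layer-cake formula $\int_\Om(-u)^p d\mu=p\int_0^\infty t^{p-1}\mu(\{-u\ge t\})dt$ and bounding $\mu(K_t)$ by inverting the definition of $\nu_k$ — namely $\mu(K_t)\le s$ whenever $\ca_k(K_t,\Om)<\nu_k(s)$ — one converts the energy control of $\ca_k(K_t)$ into a decay estimate for $\mu(K_t)$ and, after a dyadic summation and a Hardy-type inequality, recognizes the resulting bound as a constant multiple of $I_{k,p}(\Om,\mu)$. For (ii) one expands the exponential in its Taylor series $\sum \beta^m(-u)^{qm}/(m!\|u\|^{qm})$, applies the just-obtained $L^{qm}$ estimates with the sharp exponent $q\le(n+1)/n$ ensuring $qm<(n+1)$-growth is controlled by the $(m!)$-type factors coming from $I_n$, and sums the series; the restriction $q\le\frac{n+1}{n}$ is exactly what makes the series converge when $I_n(\beta,\Om,\mu)<\infty$.

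\textbf{Compactness.} For the compact embedding statement in (i) ($1<p<\infty$), one shows that \eqref{cond-compact} is equivalent to: given a bounded sequence in $\F_k(\Om)$, its images in $L^p(\Om,\mu)$ are uniformly $p$-integrable and tight near the ``small capacity'' sets. Concretely, split $\int_\Om(-u)^pd\mu=\int_{\{-u\le T\}}+\int_{\{-u>T\}}$; the first term is handled by weak-$*$ convergence of the $u_j$ (which is compact in $L^1_{loc}$) plus the absolute continuity of $\mu$ w.r.t.\ capacity, while the tail $\int_{\{-u>T\}}(-u)^p d\mu$ is bounded, via the layer-cake argument above, by $\sup_{s<\sigma(T)}s/\nu_k(s)^{p/(k+1)}$ where $\sigma(T)\to 0$ as $T\to\infty$; \eqref{cond-compact} forces this to be small uniformly in $j$. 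Conversely, if \eqref{cond-compact} fails, one picks $s_j\to 0$ with $s_j/\nu_k(s_j)^{p/(k+1)}\ge\delta>0$ and near-optimal compacta $K_j$, and the relative extremal functions $u_{K_j}$ (suitably normalized) form a bounded sequence in $\F_k(\Om)$ with no $L^p(\Om,\mu)$-convergent subsequence, defeating compactness.

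\textbf{Main obstacle.} The crux is the ``hard direction'' and specifically the nonlinear analogue of the truncation argument: unlike the linear Sobolev setting, the functional $u\mapsto\mathcal E_k(u)$ is not convex or even monotone under truncation in a naive way, so the telescoping estimate $\sum_j\ca_k(\{-u\ge 2^{j+1}t_0\})\lesssim\mathcal E_k(u)$ must be proved using the Bedford-Taylor comparison principle and the ``plurifine'' locality of the Hessian operator (that $(dd^c\max(u,-t))^k = \mathbf 1_{\{u>-t\}}(dd^cu)^k$ off a pluripolar set), together with integration by parts that is only valid in $\F_k(\Om)$ — this is where the hypothesis $u\in\F_k(\Om)$ (decreasing approximation by nice functions) is essential. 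Getting the constants in this chain to be dimensional and independent of $u$, and matching the dyadic sum precisely to the integral $I_{k,p}$ (rather than a larger quantity), is the technical heart of the proof.
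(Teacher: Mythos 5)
Your overall framework is the right one and matches the paper's: a two-direction argument in the spirit of Maz'ya, whose technical heart is a capacitary strong type inequality of the form $\int_0^\infty t^k\,\ca_k(\{u\le -t\},\Omega)\,dt\lesssim \|u\|_{\pshko(\Omega)}^{k+1}$, combined with the layer-cake formula. Your derivation of this estimate via $\ca_k(K_{2t},\Omega)\le t^{-k}\int_{K_t}(dd^cu)^k\wedge\omega^{n-k}$ (proved by the truncation $\max\{(u+t)/t,-1\}$ and the comparison principle) and dyadic telescoping against $\mathcal E_k(u)$ is a valid, slightly more combinatorial variant of the paper's continuous-integral argument with the $\log A$ kernel; both deliver the same content. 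That part is sound.

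There are, however, two genuine gaps.

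\textbf{Necessity of $I_{k,p}<\infty$ for $0<p<k+1$.} You correctly note that testing a single relatively extremal function $u_K$ only gives the pointwise bound $s\le C^p\,\nu_k(s)^{p/(k+1)}$, and that this does not imply $\int_0^{\mu(\Omega)}(s/\nu_k(s))^{p/(k+1-p)}\,ds<\infty$ (indeed, plugging the pointwise bound in produces an $s^{-1}$ integrand). But your proposed fix — ``a layer-cake/truncation argument on the sets $\{-u>2^j\}$'' — is not the correct repair: there is no single $u$ from which to take level sets here. What is required (and what the paper does) is to \emph{build} a multi-scale test function $u_{i,m}:=\sup_{i\le j\le m}\{\gamma_j u_{K_j}\}$ from near-optimal compacta $K_j$ with $\mu(K_j)\ge 2^j$ and $\ca_k(K_j,\Omega)\approx\nu_k(2^j)$, with weights $\gamma_j=(2^j/\nu_k(2^j))^{1/(k+1-p)}$ tuned so that both $\|u_{i,m}\|_{\pshko}$ and $\|u_{i,m}\|_{L^p(\mu)}$ are comparable to the dyadic partial sum of the integral $I_{k,p}$. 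Plugging this into the trace inequality then forces the integral to be finite. Without this construction the ``only if'' direction in the regime $p<k+1$ is not established.

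\textbf{Sufficiency in the Moser–Trudinger case (ii).} Expanding the exponential and ``applying the just-obtained $L^{qm}$ estimates'' does not close. The Sobolev constant you obtain from $I_n(\beta,\Omega,\mu)<\infty$ for the $L^{qm}$-norm grows like $\sup_s s\,(\log(I_n/s)/\beta)^{m}\sim I_n (m/e)^m\beta^{-m}$, and the differential $d(t^{qm})$ contributes an extra linear factor $m$. After multiplying by $\beta^m/m!$ and using Stirling one is left with a series $\sum_m m\cdot m^m e^{-m}/m! \sim \sum_m \sqrt{m}$, which diverges. The restriction $q\le\frac{n+1}{n}$ alone does not rescue this. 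The paper avoids the loss by \emph{not} iterating part (i): it plugs the exponential isocapacitary bound $\mu(K_t)\le I_n\exp(-\beta/\ca_n(K_t)^{q/(n+1)})$ directly into the layer-cake integral, uses the weak capacitary inequality to trade $\ca_n(K_t)^{qi/(n+1)-1}$ for $(\|u\|/t)^{qi-(n+1)}$, and then resums the Taylor series \emph{before} any supremum is taken, so that the isocapacitary inequality is invoked only once and no Stirling deficit accumulates. You should replace the iterated-power argument with this direct substitution.

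One smaller difference: for compactness you propose a direct uniform integrability argument, whereas the paper first applies the Poincaré-type inequality $\|u\|_{\mathcal{PSH}_{1,0}}\lesssim\|u\|_{\pshko}$ (and the corresponding capacity comparison $\ca_1\lesssim \ca_k^{2/(k+1)}$) to reduce everything to the classical linear compactness theorem of Maz'ya in $W^{1,2}_0$, and then interpolates. Your converse (building bounded sequences from near-optimal compacta when \eqref{cond-compact} fails) is essentially the paper's; the forward direction is cleaner via the Poincaré reduction and I would recommend it, since your ``weak-$*$ plus absolute continuity'' sketch does not by itself produce a convergent subsequence in $L^p(\Omega,\mu)$.
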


\begin{rem}\label{rem}
(a) For $p\geq k+1$, $\beta>0$, the conditions $I_{k, p}(\Omega,\mu), I_{n}(\beta,\Omega,\mu)<+\infty$ are equivalent to the following
 isocapacitary type inequalities
\begin{eqnarray}
&&\mu(K)\le I_{k, p}(\Omega,\mu)\cdot\ca_k(K,\Om)^{\frac{p}{k+1}}, \label{cond-sobolev1}\\
&&\mu(K)\leq I_{n}(\beta,\Omega,\mu)\cdot\exp\left(-\frac{\beta}{\ca_n(K,\Om)^{\frac{q}{n+1}}}\right) \label{cond-MT1}
\end{eqnarray}
for $K\Subset \Om$.
By \cite{K96, DK},
it is known that when $\mu$ is Lebesgue measure
\begin{eqnarray}
|E|&\leq& C_{\lambda,\Om}\cdot\ca_k(E,\Om)^{\lam},\ \lam<\frac{n}{n-k},\ 1\le k\le n-1,\label{isocap1}\\[3pt]
|E|&\leq& C_{\beta,n,\Om}\cdot \exp\left(-\frac{\beta}{\ca_n(E,\Om)^{\frac{1}{n}}}\right),\  0<\beta<2n. \label{isocap}
\end{eqnarray}
It is still open whether $\beta$ can attain $2n$. 
By a result in \cite{BB}, the conclusion is true for  subsets $E\Subset\Om$ with $\bS^1$-symmetry(invariant under the rotation $e^{\sq\theta}z$ for all $\theta\in \R^1$), when $\Om\subset \C^n$ is a ball centered at the origin. See Remark \ref{rem1} for more explanations.

(b) By the arguments of \cite[Section 5]{BB}, \eqref{MT} is equivalent to   
\begin{align*}
\sup\left\{\int_{\Om}\exp\left(k(-u)-\frac{n^nk^{1+n}\cE_n(u)}{(n+1)^{1+n}\beta^n}\right)^{q}\,d\mu: \forall k>0, u\in \mathcal F_n(\Omega)\right\}<+\infty.
\end{align*}

(c) We can also prove the quasi Moser-Trudinger type trace inequality is equivalent to the quasi  Brezis-Merle type trace inequality(see Theorem \ref{remB}).  
Note that the proof  for the case $\mu$ is Lebesgue measure in \cite{BB} used thermodynamical formalism and a dimension induction argument. Our proof here uses the isocapacitary inequality \eqref{cond-MT1}. 
\end{rem}

\begin{ex}
Let $\mu$ be the measure with singularities of Poincar\`e type, i.e.,
$$d\mu=\frac{1}{\prod_{j=1}^d|z^j|^2\left(1-\log |z^j|\right)^{1+\alpha}}\,dz^1\wedge d\bar z^1\wedge\cdots\wedge dz^n\wedge d\bar z^n,  \ \ \ \text{where }d\le n, \ \alpha>0.$$
nn the unit disk $\mathbb D^n\subset \mathbb C^n$, 
	According to \cite[Lemma 4.1]{DL}, we have
	$$\mu(K)\le C\cdot {\ca_n(K,\mathbb D^n)}^{\alpha}.$$
By Theorem \ref{main3},  a Sobolev type inequality with respect to $d\mu$ holds for plurisubharmonic functions.
\end{ex}

Now we turn to the corresponding equations.  
Consider the Dirichlet problem
	\begin{equation}\label{cMA}
\begin{cases}
(dd^cu)^k\wedge\omega^{n-k}=d\mu   \ \ &\text{in\ $\Omega$,}  \\
u=\varphi,   &\text{on }\p\Om
\end{cases}
\end{equation}  
for  a nonnegative Borel measure $\mu$.
In a seminal work \cite{K98},
Ko\l{}odziej obtained the $L^\infty$-estimate and existence of continuous solutions for the complex Monge-Amp\`ere equation when $d\mu$ is dominated by a suitable function of capacity, especially for $d\mu\in L^p(\Omega)$.
Furthermore, the solution  is shown to be H\"older continuous under certain assumptions on $\Omega$ and $\varphi$ \cite{GKZ}. 
Generalizations to the complex Hessian equations were made by \cite{DK, Ngu}. 
These results were established by pluripotential theory. 
In \cite{WWZ2}, the authors present a new PDE proof for the complex Monge-Amp\`ere equation with $d\mu\in L^p(\Omega)$ based on the Moser-Trudinger type inequality. 
By Theorem \ref{main3}, we have

\begin{theo}\label{main5}
Let $\mu$ be a non-pluripolar, nonnegative Radon measure with finite mass. Then the following statements are equivalent:
\begin{enumerate}
\item [(i)] There exist $0<\delta<\frac{1}{k}$ and a  constant $C>0$ depending on $\mu$ and $\Omega$ such that for any Borel subset $E\subset \Om$, 
the Dirichlet problem
\begin{align}\label{k-Dir}
\begin{cases}
(dd^cu)^k\wedge\omega^{n-k}=\chi_E\,d\mu,\ \ \ &\text{in }\Om,  \\
u=0,   &\text{on }\p \Om,
\end{cases}
\end{align}
 admits a continuous solution $u_E\in \pshko(\Om)$ such that 
\begin{align}\label{infty}
\|u_E\|_{L^{\infty}(\Om,\mu)}\le C\mu(E)^{\delta}.
\end{align}
Here $\chi_E$ is the characteristic function of $E$.

\item [(ii)] 
There exists  $p\ge k+1$ such that  
$I_{k,p}(\Om,\mu)<+\infty$.

\end{enumerate}

More precisely,  $\delta$ and $p$ can be determined mutually by $p=\frac{k+1}{1-k\delta}$.
\end{theo}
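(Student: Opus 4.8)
The plan is to read both implications directly off Theorem \ref{main3}, or more precisely off its isocapacitary reformulation in Remark \ref{rem}(a): for $p\ge k+1$ the finiteness $I_{k,p}(\Om,\mu)<+\infty$ is equivalent to the inequality $\mu(B)\le I_{k,p}(\Om,\mu)\,\ca_k(B,\Om)^{p/(k+1)}$ for all $B\Subset\Om$. Throughout I use the prescribed change of exponents $p=\tfrac{k+1}{1-k\delta}$, equivalently $\delta=\tfrac{p-k-1}{kp}$, under which $\delta\in(0,\tfrac1k)$ corresponds to $p>k+1$ and $\tfrac{p}{k+1}=\tfrac{1}{1-k\delta}$.

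For (i)$\Rightarrow$(ii) I would simply feed the solutions into the definition of capacity. Fix $K\Subset\Om$; we may assume $\mu(K)>0$, so that the solution $u_K$ of \eqref{k-Dir} with $E=K$ satisfies $0<\|u_K\|_{L^\infty(\Om)}\le C\mu(K)^\delta$. Then $v:=u_K/\|u_K\|_{L^\infty(\Om)}$ is admissible for $\ca_k(K,\Om)$, and since $(dd^cu_K)^k\wedge\omega^{n-k}=\chi_K\,d\mu$,
\begin{align*}
\ca_k(K,\Om)\ \ge\ \int_K(dd^cv)^k\wedge\omega^{n-k}\ =\ \frac{\mu(K)}{\|u_K\|_{L^\infty(\Om)}^{k}}\ \ge\ \frac{\mu(K)^{1-k\delta}}{C^{k}}.
\end{align*}
Hence $\mu(K)\le C^{k/(1-k\delta)}\,\ca_k(K,\Om)^{p/(k+1)}$ for all $K\Subset\Om$, and by Remark \ref{rem}(a) this is exactly $I_{k,p}(\Om,\mu)<+\infty$.

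For (ii)$\Rightarrow$(i), assume $I_{k,p}(\Om,\mu)<+\infty$ for some $p>k+1$, put $A:=I_{k,p}(\Om,\mu)$ and $\delta:=\tfrac{p-k-1}{kp}$. Since $\chi_E\,d\mu\le d\mu$ is then dominated by a superlinear power of $\ca_k$, the pluripotential solvability theory for the complex Hessian equation (\cite{K98,DK,Ngu}) provides a solution $u_E\in\pshko(\Om)\cap C(\bom)$ of \eqref{k-Dir}; the remaining and substantive task is the estimate \eqref{infty}. Set $a(t):=\ca_k(\{u_E<-t\},\Om)$, which is finite, decreasing, and satisfies $\{u_E<-t\}\Subset\Om$ for $t>0$ because $u_E\in C(\bom)$ vanishes on $\pom$. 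I would combine the standard capacity--mass comparison
\begin{align*}
s^{k}\,a(t+s)\ \le\ \int_{\{u_E<-t\}}(dd^cu_E)^k\wedge\omega^{n-k}\ =\ \mu\big(E\cap\{u_E<-t\}\big)\qquad(s,t>0)
\end{align*}
with the isocapacitary inequality applied to $B=\{u_E<-t\}$ to get $a(t+s)\le A\,s^{-k}a(t)^{1+\eps}$ with $\eps:=\tfrac{p-k-1}{k+1}>0$, together with the resulting bound $a(s)\le\mu(E)\,s^{-k}$ at $t=0$. Starting from $t_0:=\mu(E)^\delta$, where $a(t_0)\le\mu(E)^{1-k\delta}$, I would iterate with the geometric increments $s_j:=(2A)^{1/k}a(t_j)^{\eps/k}$ chosen so that $a(t_{j+1})\le\tfrac12 a(t_j)$; summing gives a finite limit $t_\infty\le t_0+C(A,k,\eps)\,a(t_0)^{\eps/k}$, and because $\tfrac{\eps}{k}(1-k\delta)=\delta$ both terms are bounded by a multiple of $\mu(E)^\delta$. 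Since $a(t_j)\to0$ and $a$ is decreasing, $\ca_k(\{u_E<-t_\infty\},\Om)=0$, hence $\{u_E<-t_\infty\}$ has zero Lebesgue measure, hence $u_E\ge -t_\infty$, so $\|u_E\|_{L^\infty(\Om,\mu)}\le t_\infty\le C\mu(E)^\delta$.

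The hard part is this last step: obtaining \eqref{infty} with the correct exponent requires choosing the initial level $t_0$ so that $t_0$ and the telescoped sum $\sum_j s_j$ carry the \emph{same} power of $\mu(E)$, which is precisely what forces the relation $p=\tfrac{k+1}{1-k\delta}$; the two inequalities fed into the iteration are themselves classical. One should also note the borderline $p=k+1$ ($\delta=0$, $\eps=0$), where this iteration no longer terminates and \eqref{infty} degenerates to a plain uniform bound $\|u_E\|_{L^\infty}\le C$; that case must be handled by Ko\l odziej's original scheme under the now-borderline domination of $\mu$ by $\ca_k$, rather than by the clean iteration above.
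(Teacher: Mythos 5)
Your proposal is correct, and the (i)$\Rightarrow$(ii) direction coincides with the paper's: both normalize the solution by its $L^\infty$-norm, feed it into the defining supremum for $\ca_k(K,\Om)$, and read off the isocapacitary inequality \eqref{cond-sobolev1}. For (ii)$\Rightarrow$(i), however, you take a genuinely different route to the $L^\infty$-estimate. The paper applies the Sobolev trace inequality from Theorem \ref{main3} together with H\"older's inequality to derive the decay estimate $t\,\hat\mu(K_{s+t})\le C\,\hat\mu(K_s)^{1+\delta}$ for $\hat\mu=\chi_E\,\mu$, and then runs a De~Giorgi iteration on the measure of the superlevel sets. You instead iterate on the capacity $a(t)=\ca_k(\{u_E<-t\},\Om)$: you combine the comparison-principle bound $s^k a(t+s)\le\int_{\{u_E<-t\}}(dd^cu_E)^k\wedge\omega^{n-k}$ with the isocapacitary inequality \eqref{cond-sobolev1} directly to obtain $a(t+s)\le A\,s^{-k}a(t)^{1+\eps}$, $\eps=\frac{p-k-1}{k+1}$, and then sum a geometric sequence of increments $s_j\sim a(t_j)^{\eps/k}$. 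Both are standard De~Giorgi schemes and both land on the same exponent relation $p=\frac{k+1}{1-k\delta}$ (your identity $(1-k\delta)\eps/k=\delta$ is exactly the book-keeping that makes the two powers of $\mu(E)$ match). Your version has the virtue of using the isocapacitary condition in its raw form rather than routing through the Sobolev trace inequality of Theorem~\ref{main3}, and is somewhat more self-contained; the paper's version is chosen to illustrate the Sobolev trace inequality, which is the main theme of the article. One further difference: for the existence and continuity of $u_E$, you appeal to Ko\l{}odziej--Dinew--Nguyen pluripotential solvability under the superlinear domination $\mu\le A\,\ca_k^{p/(k+1)}$, whereas the paper constructs $u_E$ by mollifying $\mu$, checking uniformly that the mollified measures satisfy \eqref{cond-sobolev1}, and passing to the limit via a stability lemma; both are legitimate. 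Your remark about the borderline $p=k+1$ is immaterial here, since $\delta\in(0,\tfrac1k)$ in statement (i) forces $p>k+1$, and the paper's Theorem~\ref{GPT-infty} is likewise stated only for $p>k+1$.
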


\begin{rem}
(1) The conclusion from (ii) to (i) in the above theorem also holds with general continuous boundary value.

(2) When $d\mu$ is an integrable function, once we have the $L^{\infty}$-estimate for  complex  Monge-Amp\`ere equation,  the $L^{\infty}$-estimate of 
many other equations, including the complex Hessian equations  and $p$-Monge-Amp\`ere operations \cite{HL09}, can be derived by a simple comparison. In real case, this is indicated by  \cite{W2}. For example, we consider the
complex $k$-Hessian equation
\beq\label{eqhe}
\begin{cases}
(dd^cu)^k\wedge \omega^{n-k}=f\,\omega^n,\ \ \ &\text{in }\Om,  \\
u=\varphi,   &\text{on }\p \Om.
\end{cases}
\eeq
Suppose $d\mu=f\,\omega^n$ with $f\in L^{\frac{n}{k}}(\log L^1)^{n+\eps}$. Let $v$ be the solution to the complex Monge-Amper\`e equation 
$$
\begin{cases}
(dd^cv)^n=f^{\frac{n}{k}}\,\omega^n,\ \ \ &\text{in }\Om,  \\
v=\varphi,   &\text{on }\p \Om.
\end{cases}
$$ 
By elementary inequalities, $v$ is a subsolution to \eqref{eqhe}. Then $\|u\|_{L^\infty(\Omega)}\leq \|v\|_{L^\infty(\Omega)}\leq C$.
However, when $\mu$ is a general measure, this comparison does not work.
\end{rem}

It is also interesting to ask when the solution is H\"older continuous.
In \cite{DKN}, Dinh, Ko\l{}odziej and Nguyen introduced a new condition on $d\mu$ and proved  that it is equivalent to the H\"older continuity for the complex Monge-Amp\`ere equation. We give a pure PDE proof as well as for the complex Hessian equations, based on the Sobolev type inequality for complex Hessian operators and the arguments in \cite{WWZ2}.
 
 As in \cite{DKN}, we denote by $W^*(\Om)$  the set of functions $f\in W^{1,2}(\Om)$ such that 
$$df\wedge d^cf\le T$$
for some closed positive $(1,1)$-current $T$ of finite mass  on $\Om$. Define a Banach norm by
$$\|f\|_*:=\|f\|_{L^1(\Om)}+\min\left\{\|T\|_{\Om}^{\frac{1}{2}}\,\big|\,T\text{ as above}\right\},$$
where the mass of $T$ is defined by $\|T\|_{\Om}:=\int_\Om T\wedge\omega^{n-1}$.
\begin{theo}\label{main6}
Suppose $1\le k\le n$, $\Om$ is a $k$-pseudoconvex domain with smooth boundary. Let $\mu$ be a Radon measure with finite mass.  Let $\gamma\in\left(\frac{(n-k)(k+1)}{2nk+n+k}, k+1\right]$. 
The following statements are equivalent:
\begin{enumerate}
	\item [(i)]The Dirichlet problem \eqref{cMA} admits a solution $u\in C^{0,\gamma'}(\Om)$ with $$0<\gamma'<\frac{(2nk+n+k)\gamma-(n-k)(k+1)}{(n+1)k\gamma+(n+1)k^2+nk+k}.$$

	\item [(ii)] There exists  $C>0$ such that for every smooth function $f\in W^*(\Om)$ with $\|f\|_*\le 1$,
\begin{align}\label{cond-DKN}
\mu(f):=\int_\Omega f\,d\mu\le C\|f\|_{L^1(\Om)}^{\gamma}.
\end{align}
\end{enumerate}
\end{theo}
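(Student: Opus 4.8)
The two implications are proved separately; both hinge on trading the analytic condition \eqref{cond-DKN} for capacitary estimates on $\mu$, with the Sobolev-type trace inequality of Theorem \ref{main3}(i) supplying the quantitative interpolation. For the direction (ii) $\Rightarrow$ (i) I would first deduce from \eqref{cond-DKN} a strong isocapacitary inequality $\mu(K)\le C\,\ca_k(K,\Om)^{\beta}$ with some $\beta>1$, for all $K\Subset\Om$ (note \eqref{cond-DKN} already forces $\mu$ to charge no pluripolar set). This is obtained by testing \eqref{cond-DKN} against (mollifications of) the powers $(-u_K^*)^{a}$, $a\ge 1$, of the relative $k$-extremal function $u_K^*$ of $K$ (so $-1\le u_K^*\le 0$, $u_K^*=-1$ q.e.\ on $K$, $\cE_k(u_K^*)\le\ca_k(K,\Om)$): one has $d\big((-u_K^*)^a\big)\wedge d^c\big((-u_K^*)^a\big)\le Ca^2\,du_K^*\wedge d^cu_K^*\le Ca^2\,dd^c e^{u_K^*}$, whose mass is controlled by the $1$-energy of $u_K^*$ and hence, through a mixed-form Hölder inequality together with $\|u_K^*\|_{L^1}\le C$, by a power of $\ca_k(K,\Om)$; while $\|(-u_K^*)^a\|_{L^1}\le C\|u_K^*\|_{L^{ap}(\Om)}^a$ is controlled by a power of $\ca_k(K,\Om)$ via Theorem \ref{main3}(i) for Lebesgue measure. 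Optimizing over $a$ within the range dictated by the $k$-Hessian Sobolev exponent produces $\beta>1$, and the assumption $\gamma>\frac{(n-k)(k+1)}{2nk+n+k}$ is precisely what makes this possible. With $\beta>1$ we get $I_{k,p}(\Om,\mu)<+\infty$ for some $p>k+1$, so Theorem \ref{main5} (and Remark (1) after it, for general continuous boundary data) furnishes a continuous solution of \eqref{cMA} together with an $L^\infty$-bound depending only on $\mu$ and $\Om$.

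Next I would regularize: set $\mu_j:=\chi_{\Om_j}(\mu*\rho_{1/j})$ for exhausting $\Om_j\Subset\Om$ and a standard mollifier $\rho_{1/j}$; then $\mu_j$ is smooth, $\mu_j\to\mu$ weakly, and since $\int f\,d\mu_j=\int(f*\rho_{1/j})\,d\mu$ with $\|f*\rho_{1/j}\|_*\le\|f\|_*$ and $\|f*\rho_{1/j}\|_{L^1}\le\|f\|_{L^1}$, each $\mu_j$ satisfies \eqref{cond-DKN} with the same $C$ and $\gamma$. Solving $(dd^cu_j)^k\wedge\omega^{n-k}=d\mu_j$ with $u_j=\varphi$ on $\p\Om$ gives smooth $u_j$, uniformly bounded by the previous paragraph. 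The crux is a uniform Hölder modulus. For $\delta>0$ put $u_{j,\delta}(z):=\sup_{|w|\le\delta}u_j(z+w)$ and let $\widehat u_{j,\delta}$ be the largest $k$-sh function on $\Om$ lying below $u_{j,\delta}$ and agreeing with it near $\p\Om$; then $0\le\widehat u_{j,\delta}-u_j$ and, by the comparison principle, $(dd^c\widehat u_{j,\delta})^k\wedge\omega^{n-k}$ is carried by the contact set $\{\widehat u_{j,\delta}=u_{j,\delta}\}$. A stability estimate — obtained from the comparison principle, the Sobolev-type trace inequality of Theorem \ref{main3}(i) (valid uniformly, since the $\mu_j$ satisfy the isocapacitary bound with $\beta>1$ uniformly), and a De Giorgi-type iteration — yields
\[
\|\widehat u_{j,\delta}-u_j\|_{L^\infty(\Om)}\le C\Big(\int_\Om(\widehat u_{j,\delta}-u_j)\,d\mu_j\Big)^{\theta}
\]
for an explicit $\theta=\theta(n,k,\gamma)>0$. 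Since $\widehat u_{j,\delta}-u_j\ge 0$ is a difference of uniformly bounded $k$-sh functions, it belongs to $W^*(\Om)$ with uniformly bounded norm (using $dv\wedge d^cv\le C\,dd^c e^{v}$ for bounded $k$-sh $v$), so \eqref{cond-DKN} (after rescaling) bounds the integral above by $C\|\widehat u_{j,\delta}-u_j\|_{L^1}^{\gamma}\le C\|u_{j,\delta}-u_j\|_{L^1}^{\gamma}\le C(\delta|\log\delta|)^{\gamma}$, the last step by interior regularity of bounded $k$-sh functions. Combining these and comparing $u_{j,\delta}$ with $\widehat u_{j,\delta}$ near $\p\Om$ (using the smoothness of $\p\Om$ and $\varphi$) gives $\|u_{j,\delta}-u_j\|_{L^\infty(\Om)}\le C\delta^{\gamma'}$, hence $\mathrm{osc}_{B(x,r)}u_j\le Cr^{\gamma'}$ uniformly in $j$; the stated formula for $\gamma'$ is exactly the output of the exponent arithmetic $\gamma'=\theta\gamma$ once $\theta$ has been computed. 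Extracting a uniformly convergent subsequence $u_j\to u\in C^{0,\gamma'}(\bom)$ with $u=\varphi$ on $\p\Om$ and invoking the Bedford--Taylor convergence theorem, $(dd^cu)^k\wedge\omega^{n-k}=d\mu$, which proves (i).

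For the direction (i) $\Rightarrow$ (ii), suppose $u\in C^{0,\gamma'}(\bom)$ solves \eqref{cMA}. A scaling argument using the Chern--Levine--Nirenberg inequality (subtract the constant $u(x)$ and rescale $B(x,r)$ to the unit ball) turns the Hölder bound into $\mu(B(x,r))\le Cr^{k\gamma'+2(n-k)}$, which via the relative extremal function upgrades to a decay estimate for $\mu$ on arbitrary Borel sets in terms of their $(k\gamma'+2(n-k))$-dimensional Hausdorff content. Now take a smooth $f\in W^*(\Om)$ with $\|f\|_*\le 1$, say $df\wedge d^cf\le T$; reducing to $f\ge 0$, write $\mu(f)=\int_\Om f\,(dd^cu)^k\wedge\omega^{n-k}$, integrate by parts once, and apply the Cauchy--Schwarz inequality for mixed forms to $\int_\Om df\wedge d^cu\wedge(dd^cu)^{k-1}\wedge\omega^{n-k}$; the factor $\int_\Om du\wedge d^cu\wedge(dd^cu)^{k-1}\wedge\omega^{n-k}=\cE_k(u)$ is bounded because the (continuous, finite-energy) solution $u$ is bounded, and $\int_\Om df\wedge d^cf\wedge(dd^cu)^{k-1}\wedge\omega^{n-k}\le\int_\Om T\wedge(dd^cu)^{k-1}\wedge\omega^{n-k}$ is controlled by $\|T\|_\Om\le 1$ via another integration by parts and Chern--Levine--Nirenberg. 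This already gives $\mu(f)\le C$; to upgrade it to $C\|f\|_{L^1}^{\gamma}$ I would split $f=\min(f,t)+(f-t)^+$, estimate $\int_\Om\min(f,t)\,d\mu\le t\,\mu(\Om)$, estimate $\int_\Om(f-t)^+\,d\mu$ using the content decay of $\mu$ on the superlevel sets $\{f>s\}$ together with the summability of their contents against $\|f\|_*$, and optimize in $t$; the resulting exponent is $\gamma$, obtained by inverting the relation between $\gamma$ and $\gamma'$ from the other direction.

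The main obstacle is the uniform Hölder estimate in the second paragraph. The difficulty is twofold: producing, for the complex $k$-Hessian operator, a stability estimate of the stated form with an \emph{effective} exponent $\theta>0$; and controlling the defect mass $\int_\Om(\widehat u_{j,\delta}-u_j)\,d\mu_j$ through \eqref{cond-DKN} with constants independent of $j$ (which forces the verification that bounded differences of $k$-sh functions lie in $W^*(\Om)$ with uniformly bounded norm, and careful handling of the boundary behaviour). For $k<n$ the $(k+1)$-homogeneity of the energy $\cE_k$ and the finiteness of the Sobolev exponent of $\ca_k$ genuinely constrain both the admissible range of test powers $a$ in the first step and the value of $\theta$ — this is the source of the lower bound $\gamma>\frac{(n-k)(k+1)}{2nk+n+k}$ and of the precise formula for $\gamma'$ — so that reconciling all the exponents, and making the various integrations by parts rigorous up to $\p\Om$, is where the real work lies.
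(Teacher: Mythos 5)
Your proposal takes a genuinely different route from the paper in the direction (ii)$\Rightarrow$(i), and that route has a quantitative gap that prevents it from covering the full stated range of $\gamma$.

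The paper does \emph{not} try to upgrade \eqref{cond-DKN} to an isocapacitary inequality $\mu(K)\le C\,\ca_k(K,\Om)^\beta$ with $\beta>1$ and then invoke Theorem~\ref{main5}. Instead it works directly with Lebesgue measure: it first shows $\|u\|_*\le C\,\cE_k(u)^{1/(k+1)}$ (Poincar\'e + classical Sobolev), feeds $f=-u/\|u\|_*$ into \eqref{cond-DKN} to get $\cE_k(u)^{1-\gamma/(k+1)}\le C\|u\|_*^{1-\gamma}$, and concludes $\|u\|_*\le c_0$; plugging $f=u/c_0$ back into \eqref{cond-DKN} then yields $\int(-u)\,d\mu\le C(\int(-u)\,\omega^n)^{\gamma}$. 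This converts the De Giorgi iteration for the measure $d\mu$ on the left into one driven by the \emph{Lebesgue} Sobolev inequality $\|u_s\|_{L^p(\Om_s,\omega^n)}\le C\cE_k(u_s)^{1/(k+1)}$ on the right, producing $\|u\|_{L^\infty}\le C|\Om|^\delta$ with $\delta=\frac{2\gamma p-\gamma-k-1}{(k+1-\gamma)p}$ and $p\le\frac{n(k+1)}{n-k}$. The H\"older modulus is then obtained by the standard $u_\eps/\hat u_\eps/\tilde u_\eps$ machinery of Guedj--Ko\l{}odziej--Zeriahi and Nguyen (Lemmas~\ref{interchange} and~\ref{lapalace-control}) applied with Lebesgue stability (Proposition~\ref{prep-holder}), combined with a Chern--Levine--Nirenberg bound showing $\triangle u\in L^1$. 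At no point is an isocapacitary inequality for $\mu$ established or used, and the decomposition never introduces a global upper envelope $\widehat u_{j,\delta}$.

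Now the gap. Your first step — testing \eqref{cond-DKN} against powers $(-u_K^*)^a$ and optimizing over $a$ — can at best give
$$
\mu(K)\le C\,a^{1-\gamma}\ca_k(K,\Om)^{\frac{1-\gamma+a\gamma}{k+1}},
$$
since $\|(-u_K^*)^a\|_{L^1}\lesssim\ca_k(K,\Om)^{a/(k+1)}$ is available only for $a$ up to the Lebesgue Sobolev exponent $a_{\max}=\frac{n(k+1)}{n-k}$ (for $k<n$), while $\|(-u_K^*)^a\|_*\lesssim a\,\ca_k(K,\Om)^{1/(k+1)}$. At $a=a_{\max}$ the exponent is $\frac{n-k+\gamma k(n+1)}{(n-k)(k+1)}$, and requiring this to exceed $1$ forces $\gamma>\frac{n-k}{n+1}$. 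But $\frac{n-k}{n+1}>\frac{(n-k)(k+1)}{2nk+n+k}$ strictly whenever $nk>1$, so your argument fails on a nonempty subinterval of the theorem's hypothesis $\gamma\in\left(\frac{(n-k)(k+1)}{2nk+n+k},k+1\right]$; it also cannot, without substantial extra exponent bookkeeping, recover the precise threshold for $\gamma'$. The paper's trick of passing through Lebesgue measure is precisely what avoids losing this factor. A secondary concern: for $1\le k<n$ the bound $du\wedge d^cu\le C\,dd^ce^{u}$ does not certify membership in $W^*(\Om)$, since $dd^ce^{u}$ need not be a \emph{positive} $(1,1)$-current when $u$ is only $k$-plurisubharmonic; your scheme relies on this both for the extremal-function test and for placing $\widehat u_{j,\delta}-u_j$ in $W^*(\Om)$, so it requires a different dominating current for $k<n$.

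For (i)$\Rightarrow$(ii), the paper simply cites the PDE argument of \cite[Prop.\ 2.4]{DKN}; your Hausdorff-content route is different and is not checked here against the exact exponent arithmetic.
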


The structure of the paper is as follows: Section \ref{pre} is devoted to a review on the relative capacity for $k$-plurisubharmonic functions. In particular, we obtain several equivalent definitions for the capacity. Section \ref{capest}, we establish the capacitary estimates for level sets of $k$-plurisubharmonic functions, which is the main tool in the proof of Theorem \ref{main3}. Theorem \ref{main3} will be proved case by case in Section \ref{mainpf}.  In the last section, we apply Theorem \ref{main3} to the Dirichlet problem for the complex Hessian equations to prove Theorem \ref{main5} and \ref{main6}.

\vskip 20pt

\section{On relatively capacities and relatively extremal functions}\label{pre}

In this section, we recall the relative capacity for plurisubharmonic functions \cite{BT, B}. 

For a Borel subset $E\subset \Om$, the {\it $k$-capacity} is defined as
$$\ca_k(E,\Om)=\sup\left\{\left.\int_{E}(dd^cv)^k\wedge\omega^{n-k}\,\right|\, v\in\pshk(\Om),-1\le v\le 0\right\}.$$
It is well known that the $k$-capacity can be characterized by
the {\it relatively $k$-extremal function} $u_{k,E,\Omega}^*$, which is the upper regularization of 
$$u_{k,E,\Omega}:=\sup\left\{v\,\left|v\in \pshk(\Om),v\le -1\text{ on }E, v<0\text{ on }\Om\right.\right\}.$$
We will usually write  $u_{k,E}^*$ for simplicity if there is no confusion.
When $E=K$ is a compact subset, we have $-1\le u_{k,K}^*\le 0$, the complex Hessian measure $(dd^cu_K^*)^k\wedge\omega^{n-k}=0$ on  $\Om\setminus K$. Moreover, $\ca_k(K,\Om)=0$ if $u_{k,K}^*>-1$ on $K$. 
The following well-known fact shows $u_{k,K}^*\in\pshko(\Om)$.
\begin{lem}\label{exb}
If $K\subset \Om$ is a compact subset,
 we have
$u_{k,K}^*\big|_{\p \Om}=0$. 
\end{lem}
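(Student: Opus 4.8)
The goal is to show that for a compact set $K\Subset\Om$, the relatively $k$-extremal function $u_{k,K}^*$ has vanishing boundary value. The plan is to construct explicit barriers from above and below. For the upper bound, since $0\in\pshk(\Om)$ is a competitor in the defining family only after we check $0<0$ fails—so instead we simply note that $u_{k,K,\Om}\le 0$ on $\Om$ by definition, and hence $u_{k,K}^*\le 0$; in particular $\limsup_{z\to\xi}u_{k,K}^*(z)\le 0$ for every $\xi\in\p\Om$. The substance is therefore the lower bound: we must produce a function $v\in\pshk(\Om)$ with $v\le -1$ on $K$, $v<0$ on $\Om$, and $v(z)\to 0$ as $z\to\p\Om$; then $v\le u_{k,K,\Om}\le u_{k,K}^*$ forces $u_{k,K}^*|_{\p\Om}=0$.

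To build such a $v$, I would use the $k$-pseudoconvexity (in fact, smoothness) of $\Om$: there is a smooth defining function $\rho$ with $\rho<0$ in $\Om$, $\rho=0$ on $\p\Om$, $d\rho\ne 0$ on $\p\Om$, and such that $A\rho$ is $k$-plurisubharmonic (strictly, after adding $\eps|z|^2$) for $A$ large on a neighborhood of $\bom$—this is the standard consequence of $k$-pseudoconvexity, and for ordinary pseudoconvexity it is classical. Since $K\Subset\Om$, we have $\rho\le -c_0<0$ on $K$ for some $c_0>0$. Now choose $B>0$ so large that $B\rho\le -1$ on $K$ (take $B=1/c_0$) while still $B\rho\in\pshk(\Om)$ after the usual modification; concretely set $v:=\max\{B\rho,\,-1+\text{(small psh correction)}\}$ or more simply just take $v=B\rho$ when $\rho$ itself is $k$-subharmonic, which holds for convex or smoothly bounded pseudoconvex $\Om$ by replacing $\rho$ with $e^{A\rho}-1$. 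Then $v\in\pshk(\Om)$, $v<0$ in $\Om$, $v\le -1$ on $K$, and $v(z)=B\rho(z)\to 0$ as $z\to\p\Om$. This gives the lower barrier.

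Combining: for each $\xi\in\p\Om$,
\[
0\ge \limsup_{z\to\xi}u_{k,K}^*(z)\ge \liminf_{z\to\xi}u_{k,K}^*(z)\ge \liminf_{z\to\xi}v(z)=0,
\]
so $u_{k,K}^*$ extends continuously to $\xi$ with value $0$. Since $\xi\in\p\Om$ was arbitrary, $u_{k,K}^*\big|_{\p\Om}=0$, and in conjunction with boundedness ($-1\le u_{k,K}^*\le 0$) and plurisubharmonicity this yields $u_{k,K}^*\in\pshko(\Om)$ as claimed.

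\textbf{Main obstacle.} The only delicate point is producing the smooth $k$-subharmonic barrier $\rho$ (equivalently $e^{A\rho}-1$) adapted to the hypothesis: for genuine pseudoconvexity ($k=n$) or for strictly pseudoconvex domains this is textbook, but for general smooth $k$-pseudoconvex $\Om$ one needs the known fact that $k$-pseudoconvexity is equivalent to the existence of a smooth bounded $k$-subharmonic exhaustion, or at least a local $k$-subharmonic defining function near $\p\Om$ that can be glued to a global one using $\max$ with a large constant multiple of $|z|^2$ shifted appropriately. I would either cite this directly from the $k$-pseudoconvexity literature (e.g. the references to \cite{B, C06} already in the paper) or, if a self-contained argument is wanted, patch a local barrier at each boundary point via a partition-of-unity/regularized-max construction, which is routine but slightly technical; everything else in the proof is immediate from the definition of $u_{k,K,\Om}$.
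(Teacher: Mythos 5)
Your proof is correct and follows essentially the same route as the paper's: construct a negative $k$-plurisubharmonic barrier on $\Om$ that vanishes on $\p\Om$ and, after scaling, is $\le -1$ on $K$, and then compare with $u_{k,K,\Om}$. The paper just cites Kerzman--Rosay (Proposition 1.2 in \cite{KR}) for a smooth bounded $k$-plurisubharmonic exhaustion $\psi\in\pshko(\Om)$ and rescales it by $1/a$ with $-a=\sup_K\psi<0$, which sidesteps the defining-function technicalities you flag (and note that $A\rho$ for large $A$ is not more $k$-plurisubharmonic than $\rho$ itself--you need the $e^{A\rho}-1$ correction you mention, or simply the cited exhaustion).
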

\begin{proof}	
By \cite[Proposition 1.2]{KR}, there exists an exhaustion function $\psi\in C^{\infty}(\Om)\cap \pshko(\Om)$.
By the maximum principle we have $-a:=\sup_K\psi<0$. Let $\hat \psi:=\frac{\psi}{a}\le -\chi_K$, then we get $\hat\psi\le u_{k,K}^*$ on $\Om$. By the fact $\hat\psi(\xi)\to 0$ as $\xi\to z\in \p\Om$, we get the result.
\end{proof}

Suppose $\Om$ is $k$-hyperconvex so that $\pshko(\Om)$ is non-empty. 
Inspired by \cite{XZ} for the real Hessian equations, we consider several capacities defined as follows. 

\begin{defi}\label{cap-2}
(i) Let $K$ be a compact subset of $\Om$. Define
\begin{align}
&\widetilde{\ca}_{k,1}(K,\Om)=\sup\left\{\left.\int_K (-v)(dd^cv)^k\wedge\omega^{n-k}\,\right|\, v\in \pshko(\Om), -1\le v\le 0\right\}
, \\
&\widetilde{\ca}_{k,2}(K,\Om):=\inf\left\{\left.\int_{\Om}(dd^cv)^k\wedge\omega^{n-k}\,\right|\,v\in \pshko(\Om), v\big|_{K}\le -1\right\},\\
&\widetilde{\ca}_{k,3}(K,\Om):=\inf\left\{\left.\int_{\Om}(-v)(dd^cv)^k\wedge\omega^{n-k}\,\right|\,v\in \pshko(\Om), v\big|_{K}\le -1\right\}.
\end{align}

(ii) For an open subset $O\subset \Om$ and $j=1$, $2$, $3$, let
$$\widetilde{\ca}_{k,j}(O,\Om):=\sup\left\{\left.\widetilde{\ca}_{k,j}(K,\Om)\,\right|\,\text{compact }K\subset O\right\}.$$

(iii) For a Borel subset $E\subset \Om$ and $j=1$, $2$, $3$, let
$$\widetilde{\ca}_{k,j}(E,\Om):=\inf\left\{\left.\widetilde{\ca}_{k,j}(O,\Om)\,\right|\,\text{open $O$ with }E\subset O\subset \Om\right\}.$$
\end{defi}

In order to show the equivalence of capacities defined above, we need the following well-known comparison principle.
\begin{lem}\label{compar}
	Suppose $\Om$ is a $k$-hyperconvex domain with $C^1$-boundary. 
	Let $u$, $v\in \F_k(\Om)$. If $\lim_{z\to \xi\in \p \Om}(u-v)\ge 0$, $u\le v$ in $\Om$, then there hold
	$$\int_{\Om}(dd^cu)^k\wedge\omega^{n-k}\ge \int_{\Om}(dd^cv)^k\wedge\omega^{n-k},\ \ \ \int_{\Om}(-u)(dd^cu)^k\wedge\omega^{n-k}\ge \int_{\Om}(-v)(dd^cv)^k\wedge\omega^{n-k}.$$
\end{lem}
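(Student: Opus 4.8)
\emph{Strategy and reduction.} The plan is to reduce to bounded, continuous $k$-plurisubharmonic functions with vanishing boundary values, settle both inequalities there, and pass to the limit. By the definition of $\F_k(\Om)$, pick $u_j,v_j\in\pshko(\Om)\cap C(\bom)\cap C^2(\Om)$ with $u_j\downarrow u$ and $v_j\downarrow v$. These approximants need not be comparable, so I set $\tilde v_j:=\max(v_j,u_j)$; then $\tilde v_j\in\pshko(\Om)\cap C(\bom)$, $u_j\le\tilde v_j\le 0$, and $\tilde v_j\downarrow\max(v,u)=v$ (using $u\le v$). It therefore suffices to prove both inequalities for a pair $(\phi,\psi)$ of bounded $k$-plurisubharmonic functions vanishing on $\p\Om$ with $\phi\le\psi\le0$; letting $j\to\infty$ and using the convergence of total Hessian masses and of Hessian energies along decreasing sequences in $\F_k(\Om)$ (see \cite{C06, DK}) gives the statement for $(u,v)$, the inequality being trivial if an energy limit is $+\infty$.

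\emph{The total mass inequality.} Fix $\eps\in(0,1)$ and apply the comparison principle for bounded $k$-plurisubharmonic functions \cite{BT, DK} to $\phi$ and $(1-\eps)\psi$ (both vanish on $\p\Om$):
$$(1-\eps)^k\int_{\{\phi<(1-\eps)\psi\}}(dd^c\psi)^k\wedge\omega^{n-k}\ \le\ \int_{\{\phi<(1-\eps)\psi\}}(dd^c\phi)^k\wedge\omega^{n-k}\ \le\ \int_{\Om}(dd^c\phi)^k\wedge\omega^{n-k}.$$
Since $\phi\le\psi\le(1-\eps)\psi\le0$, equality $\phi=(1-\eps)\psi$ forces $\psi=0$; and $\{\psi=0\}$ is open (a subharmonic function with an interior maximum is locally constant, and $k$-plurisubharmonic functions are subharmonic) with $dd^c\psi=0$ there, so $(dd^c\psi)^k\wedge\omega^{n-k}$ puts no mass on $\{\psi=0\}$. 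Hence the first integral above equals $\int_\Om(dd^c\psi)^k\wedge\omega^{n-k}$, and letting $\eps\to0$ yields $\int_\Om(dd^c\psi)^k\wedge\omega^{n-k}\le\int_\Om(dd^c\phi)^k\wedge\omega^{n-k}$.

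\emph{The energy inequality.} For $0\le j\le k$ put $A_j:=\int_\Om(-\psi)\,(dd^c\phi)^j\wedge(dd^c\psi)^{k-j}\wedge\omega^{n-k}$, so that $A_0=\mathcal E_k(\psi)$ and, because $0\le-\psi\le-\phi$, also $A_k=\int_\Om(-\psi)(dd^c\phi)^k\wedge\omega^{n-k}\le\mathcal E_k(\phi)$. It remains to check $A_j\le A_{j+1}$. Writing $R:=(dd^c\phi)^j\wedge(dd^c\psi)^{k-j-1}\wedge\omega^{n-k}$, a closed positive $(n-1,n-1)$-current, two integrations by parts (the boundary terms vanish since $\psi$ and $\phi-\psi$ vanish on $\p\Om$ and $R$ is closed) give
$$A_{j+1}-A_j=\int_\Om(-\psi)\,dd^c(\phi-\psi)\wedge R=\int_\Om(\phi-\psi)\,dd^c(-\psi)\wedge R=-\int_\Om(\phi-\psi)\,(dd^c\phi)^j\wedge(dd^c\psi)^{k-j}\wedge\omega^{n-k},$$
which is $\ge0$ because $\phi-\psi\le0$ and the current is positive. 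Chaining $\mathcal E_k(\psi)=A_0\le A_1\le\cdots\le A_k\le\mathcal E_k(\phi)$ finishes the bounded case, and hence the proof.

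\emph{Main difficulty.} The delicate point is not the two bounded-case arguments, which are routine, but the limiting step: one needs the mixed Hessian currents $(dd^c\phi)^j\wedge(dd^c\psi)^{k-j}\wedge\omega^{n-k}$ to be well defined and needs continuity of both $\int_\Om(dd^c\,\cdot\,)^k\wedge\omega^{n-k}$ and $\int_\Om(-\,\cdot\,)(dd^c\,\cdot\,)^k\wedge\omega^{n-k}$ along decreasing sequences in $\F_k(\Om)$. This is precisely where the hypotheses that $\Om$ be $k$-hyperconvex with $C^1$ boundary (so $\pshko(\Om)$ is nonempty and the integration-by-parts identities on it hold) and the Cegrell-type foundational results for $\F_k(\Om)$ are used; the condition $\lim_{z\to\xi}(u-v)\ge0$ together with $u\le v$ is what makes the comparable approximating scheme above available.
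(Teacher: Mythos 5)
Your proof is correct and follows the same route the paper has in mind: the paper's own proof of this lemma is a single sentence asserting that the result is ``a direct consequence of the integration by parts and the smooth approximation for functions in $\F_k(\Om)$,'' and your argument (enforcing comparability by replacing $v_j$ with $\max(v_j,u_j)$, an $\eps$-perturbed comparison principle for the total-mass inequality, an integration-by-parts interpolation $\mathcal E_k(\psi)=A_0\le\cdots\le A_k\le\mathcal E_k(\phi)$ for the energy inequality, and continuity of these functionals along decreasing sequences in $\F_k(\Om)$) is precisely a careful unfolding of that one-liner.
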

\begin{proof}
	It is a direct consequence of the integration by parts and the smooth approximation for functions in $\F_k(\Om)$.
\end{proof}

\begin{lem}\label{cap-extre}
	Suppose $1\le k\le n$, and $K\subset \Om$ is a compact subset. Then
\begin{align}
\widetilde{\ca}_{k,1}(K,\Om)=\int_K(-u_{k,K}^*)(dd^cu_{k,K}^*)^k\wedge\omega^{n-k}.
\end{align}
\end{lem}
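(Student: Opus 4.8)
The plan is to prove the identity by establishing the two inequalities separately, both of which are short. For the inequality ``$\ge$'', note that $u^*:=u_{k,K}^*$ is itself an admissible competitor in the supremum defining $\widetilde{\ca}_{k,1}(K,\Om)$: indeed $-1\le u^*\le 0$ by the standard properties of the relatively extremal function recalled above, and $u^*\in\pshko(\Om)$ by Lemma~\ref{exb}; being bounded, it lies in $\F_k(\Om)$ and its Hessian measure $(dd^cu^*)^k\wedge\omega^{n-k}$ is well defined (Bedford--Taylor). Plugging $v=u^*$ into the supremum gives $\widetilde{\ca}_{k,1}(K,\Om)\ge\int_K(-u_{k,K}^*)(dd^cu_{k,K}^*)^k\wedge\omega^{n-k}$.

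For the inequality ``$\le$'', fix an arbitrary $v\in\pshko(\Om)$ with $-1\le v\le 0$. Since $0\le -v\le 1$ and $(dd^cv)^k\wedge\omega^{n-k}$ is a nonnegative measure, dropping the weight and then using the very definition of the $k$-capacity (of which $v$ is one of the admissible test functions) yields
$$\int_K(-v)(dd^cv)^k\wedge\omega^{n-k}\ \le\ \int_K(dd^cv)^k\wedge\omega^{n-k}\ \le\ \ca_k(K,\Om).$$
It then remains to rewrite $\ca_k(K,\Om)$ in the asserted form. By the well-known characterization of the relative $k$-capacity through the extremal function, together with the fact that $(dd^cu^*)^k\wedge\omega^{n-k}$ vanishes on $\Om\setminus K$, one has $\ca_k(K,\Om)=\int_K(dd^cu^*)^k\wedge\omega^{n-k}$; and since $\{u^*>-1\}\cap K$ is $k$-pluripolar while the Hessian operator of the bounded function $u^*$ charges no $k$-pluripolar set, $-u^*=1$ almost everywhere with respect to $(dd^cu^*)^k\wedge\omega^{n-k}$ on $K$, so $\ca_k(K,\Om)=\int_K(-u^*)(dd^cu^*)^k\wedge\omega^{n-k}$. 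Taking the supremum over all admissible $v$ gives the reverse inequality and completes the proof (in particular this shows $\widetilde{\ca}_{k,1}(K,\Om)=\ca_k(K,\Om)$).

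The only step that requires care — and the closest thing to an obstacle — is the last chain of equalities: the passage from $\int_K(dd^cu^*)^k\wedge\omega^{n-k}$ to $\int_K(-u^*)(dd^cu^*)^k\wedge\omega^{n-k}$ uses that $u_{k,K}^*=-1$ quasi-everywhere on $K$ and that Hessian operators of locally bounded plurisubharmonic functions put no mass on pluripolar sets; both are classical (Bedford--Taylor theory and its $k$-Hessian analogue, cf.\ \cite{B, DK}). Note that the comparison principle, Lemma~\ref{compar}, is not needed for this particular identity; it will instead be the main tool for the corresponding statements about $\widetilde{\ca}_{k,2}$ and $\widetilde{\ca}_{k,3}$, which is presumably why it is recorded at this point.
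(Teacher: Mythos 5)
Your proof is correct, and it takes a genuinely different route from the paper. The ``$\ge$'' direction is identical (plug in $u^*:=u_{k,K}^*$ as a competitor). For ``$\le$'', the paper approximates $K$ from outside by a decreasing sequence of compacts $K_j$ with smooth boundary, so that $u_{k,K_j}^*$ is continuous with $u_{k,K_j}^*\big|_{K_j}\equiv -1$, and then invokes the comparison principle (Lemma~\ref{compar}) and the weak continuity of $(-u_j)(dd^cu_j)^k\wedge\omega^{n-k}$ along $u_j\uparrow u_{k,K}^*$. You instead drop the weight $-v\le 1$ to bound $\widetilde{\ca}_{k,1}(K,\Om)\le\ca_k(K,\Om)$ and then identify the right-hand side with $\int_K(-u^*)(dd^cu^*)^k\wedge\omega^{n-k}$ via two classical inputs: the Bedford--Taylor characterization $\ca_k(K,\Om)=\int_K(dd^cu^*)^k\wedge\omega^{n-k}$, and the fact that $\{u^*>-1\}\cap K$ is $k$-pluripolar while Hessian measures of bounded $k$-psh functions charge no pluripolar set. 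This avoids the exterior smooth approximation and the comparison principle entirely, and as a bonus it directly yields the stronger statement $\widetilde{\ca}_{k,1}(K,\Om)=\ca_k(K,\Om)$ (one of the three equalities of Lemma~\ref{equal}). The trade-off is that your route leans on the negligibility theorem and the ``no mass on pluripolar sets'' property as black boxes, whereas the paper's argument is closer to self-contained at the level of the quoted lemmas; the paper also needs the same exterior approximation machinery in the proof of Lemma~\ref{equal} anyway, which is presumably why it develops it here.
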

\begin{proof}
First, by definition we have 
$$\widetilde{\ca}_{k,1}(K,\Om)\ge \int_K(-u_{k,K}^*)(dd^cu_{k,K}^*)^k\wedge\omega^{n-k}.$$ 

To reach the reversed inequality, 
we choose $\{K_j\}$ to be a sequence of compact subsets of $\Om$ with smooth boundaries $\p K_j$ such that
$$K_{j+1}\subset K_j,\ \ \ \bigcap_{j=1}^{\infty}K_j=K.$$
Using the smoothness of $\p K_j$,  the relatively extremal function
$u_j:=u_{k,K_j}^*=u_{k,K_j}\in C(\overline \Om)$.
Note that $u_j\uparrow v$ and $u_{k,K}^*=v^*$.
Then for  $u\in \pshko(\Om)$ such that $-1\le u\le 0$, we have $u\ge u_j$ on $K_j$. Therefore, by Lemma \ref{compar},
\begin{eqnarray*}
\int_{K}(-u)(dd^cu)^k\wedge\omega^{n-k}
&\le& \int_{\{u_j\le u\}}(-u)(dd^cu)^k\wedge\omega^{n-k}\\
&\le& \int_{\{u_j\le u\}}(-u_j)(dd^cu_j)^k\wedge\omega^{n-k}  \\
&\le& \int_{\Om}(-u_j)(dd^cu_j)^k\wedge\omega^{n-k}= \int_{K_j}(-u_j)(dd^cu_j)^k\wedge\omega^{n-k}.
\end{eqnarray*}
Taking $j\to \infty$, we obtain $$\int_{K}(-u)(dd^cu)^k\wedge\omega^{n-k}\le \int_{\Om}(-u_{k,K}^*)(dd^cu_{k,K}^*)^k\wedge\omega^{n-k}=\int_{K}(-u_{k,K}^*)(dd^cu_{k,K}^*)^k\wedge\omega^{n-k}.$$
This yields
$$\widetilde{\ca}_{k,1}(K,\Om)\le \int_{K}(-u_K^*)(dd^cu_K^*)^k\wedge\omega^{n-k},$$
thereby completing the proof. 
\end{proof}

\begin{lem}\label{equal}
	For any Borel set $E\subset \Om$, we have
$$\widetilde{\ca}_{k,j}(E,\Om)=\ca_k(E,\Om),\ \ \ j=1,2,3.$$
\end{lem}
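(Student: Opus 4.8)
The plan is to prove the chain of equalities $\widetilde{\ca}_{k,1}(E,\Om) = \widetilde{\ca}_{k,2}(E,\Om) = \widetilde{\ca}_{k,3}(E,\Om) = \ca_k(E,\Om)$ by first establishing it for compact sets $K$, then propagating to open sets and finally to general Borel sets via the monotone/regularity structure built into Definition \ref{cap-2}(ii)--(iii). The key point for compact $K$ is that all four quantities are computed by the single relatively extremal function $u^*_{k,K}$. By Lemma \ref{cap-extre} we already know $\widetilde{\ca}_{k,1}(K,\Om) = \int_K (-u^*_{k,K})(dd^c u^*_{k,K})^k\wedge\omega^{n-k}$. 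Since $-1\le u^*_{k,K}\le 0$ and the Hessian measure of $u^*_{k,K}$ is supported on $K$ (as recalled after the definition of the extremal function), the integral on the right equals $\int_\Om (-u^*_{k,K})(dd^c u^*_{k,K})^k\wedge\omega^{n-k}$, which is sandwiched between $\int_K (dd^c u^*_{k,K})^k\wedge\omega^{n-k} = \ca_k(K,\Om)$ (using $-u^*_{k,K}\equiv 1$ q.e. on $K$, a standard fact, so the weight does not change the mass on $K$) and itself; thus $\widetilde{\ca}_{k,1}(K,\Om) = \ca_k(K,\Om)$.

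Next I would handle $\widetilde{\ca}_{k,2}$ and $\widetilde{\ca}_{k,3}$ for compact $K$. For the upper bounds: the competitor $u^*_{k,K}$ itself is admissible in both infima (it lies in $\pshko(\Om)$ by Lemma \ref{exb}, and $u^*_{k,K}\le -1$ q.e. on $K$; one approximates from inside by slightly shrinking to neighborhoods $K_j$ as in the proof of Lemma \ref{cap-extre} to deal with the quasi-everywhere issue, or uses $(1+\eps)u^*_{k,K}$), giving $\widetilde{\ca}_{k,2}(K,\Om)\le \ca_k(K,\Om)$ and $\widetilde{\ca}_{k,3}(K,\Om)\le \ca_k(K,\Om)$, where for the latter one uses $-v\le 1$ on the set where the mass concentrates after truncating $v$ at level $-1$, i.e. replace $v$ by $\max(v,-1)\in\pshko(\Om)$ which only decreases the energy integral and still equals $-1$ on $K$. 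For the lower bounds: given any admissible $v\in\pshko(\Om)$ with $v|_K\le -1$, truncation $\tilde v := \max(v,-1)$ satisfies $-1\le\tilde v\le 0$, $\tilde v|_K = -1$, hence $\tilde v\le u^*_{k,K}$ is false in general — rather $\tilde v$ is itself a competitor so $u^*_{k,K}\ge \tilde v$, and by the comparison principle Lemma \ref{compar} applied on the region $\{\tilde v\le u^*_{k,K}\}$ (or just on $\Om$, since $\tilde v\le u^*_{k,K}$ with equal boundary values $0$) one gets $\int_\Om (dd^c\tilde v)^k\wedge\omega^{n-k}\ge \int_\Om(dd^c u^*_{k,K})^k\wedge\omega^{n-k} = \ca_k(K,\Om)$ and likewise for the energy; since truncation only decreases both integrals, $\int_\Om(dd^c v)^k\wedge\omega^{n-k}\ge \ca_k(K,\Om)$ and $\int_\Om(-v)(dd^c v)^k\wedge\omega^{n-k}\ge \ca_k(K,\Om)$. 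Taking infima over $v$ finishes the compact case: all three $\widetilde{\ca}_{k,j}(K,\Om) = \ca_k(K,\Om)$.

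Finally I would pass to open and Borel sets. For an open $O$, by definition $\widetilde{\ca}_{k,j}(O,\Om) = \sup_{K\subset O}\widetilde{\ca}_{k,j}(K,\Om) = \sup_{K\subset O}\ca_k(K,\Om) = \ca_k(O,\Om)$, the last equality being the standard inner regularity of the $k$-capacity on open sets (Bedford--Taylor / Błocki theory, which may be cited). For a Borel set $E$, $\widetilde{\ca}_{k,j}(E,\Om) = \inf_{O\supset E}\widetilde{\ca}_{k,j}(O,\Om) = \inf_{O\supset E}\ca_k(O,\Om) = \ca_k(E,\Om)$, again by the outer regularity of the $k$-capacity. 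The main obstacle I anticipate is the quasi-everywhere bookkeeping: $u^*_{k,K} = -1$ only holds on $K$ up to a pluripolar set, and $v|_K\le -1$ in Definition \ref{cap-2} must be interpreted correctly (everywhere on $K$, not q.e.), so one must be slightly careful that the shrinking-neighborhood approximation $K_j\downarrow K$ with smooth boundary (as in Lemma \ref{cap-extre}) is legitimately used to match the two conventions, and that truncation interacts well with the comparison principle on domains with only $C^1$ boundary. These are technical but standard; no conceptually new ingredient beyond Lemmas \ref{exb}, \ref{compar}, \ref{cap-extre} and classical capacity regularity is needed.
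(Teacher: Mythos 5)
Your proof is correct, but it is structured differently from the paper's. The paper closes a single cyclic chain
\[
\widetilde{\ca}_{k,1}(K,\Om)\ \le\ \ca_k(K,\Om)\ \le\ \widetilde{\ca}_{k,2}(K,\Om)\ \le\ \widetilde{\ca}_{k,3}(K,\Om)\ \le\ \widetilde{\ca}_{k,1}(K,\Om),
\]
where the first two inequalities are immediate from the definitions and the standard identity $\ca_k(K,\Om)=\int_K(dd^cu^*_{k,K})^k\wedge\omega^{n-k}$, and the last two are obtained by descending through smooth-boundary compacts $K_j\downarrow K$ (exactly as in Lemma \ref{cap-extre}), on which $u_j:=u^*_{k,K_j}$ is continuous and equals $-1$ on $K_j$, so that the mass and energy of $u_j$ coincide and Lemma \ref{compar} can be applied directly; the equalities then follow in the limit. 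This sidesteps all quasi-everywhere bookkeeping and the fact that Hessian measures of bounded $k$-psh functions do not charge pluripolar sets. You instead prove each identity $\widetilde{\ca}_{k,j}(K,\Om)=\ca_k(K,\Om)$ separately, invoking ``$u^*_{k,K}=-1$ q.e.\ on $K$'' together with ``the Hessian measure charges no pluripolar set'' to dispose of the weight in $\widetilde{\ca}_{k,1}$, and using the truncation $\tilde v=\max(v,-1)$ (which, by Lemma \ref{compar} applied to $v\le\tilde v$, decreases both the total mass and the energy while keeping $\tilde v|_K=-1$) to handle $\widetilde{\ca}_{k,2}$ and $\widetilde{\ca}_{k,3}$. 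Both routes work; yours is conceptually more direct but imports more off-the-shelf facts from Bedford--Taylor theory, whereas the paper's chain is more self-contained, reusing only Lemma \ref{compar} and the approximation already built in Lemma \ref{cap-extre}. Two small cosmetic points in your write-up: the sentence ``$\tilde v\le u^*_{k,K}$ is false in general --- rather $\tilde v$ is a competitor so $u^*_{k,K}\ge\tilde v$'' contradicts itself (the second clause is the correct one, since $\tilde v$ is admissible in the envelope defining $u_{k,K}$); and for the reduction of Borel to open to compact you are implicitly using that $\ca_k$ is an outer-regular Choquet capacity with inner regularity on open sets, which is the same appeal to Bedford--Taylor regularity that the paper makes in its opening one-liner, so this is fine, but it should be stated rather than only gestured at.
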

\begin{proof}
	By definition, it suffices to prove the equalities when $E=K$ is a compact subset of $\Om$.
Note that for $u$, $v\in \pshko(\Om)$ such that $-1\le u\le 0$, $v\big|_{K}\le -1$, we have
$$\int_K(-u)(dd^cu)^k\wedge\omega^{n-k}\le \int_K(dd^cu)^k\wedge\omega^{n-k}\le  \int_K(dd^cu_{k,K}^*)^k\wedge\omega^{n-k}\le \int_{\Om}(dd^cv)^k\wedge\omega^{n-k}.$$
Hence we obtain $$\widetilde{\ca}_{k,1}(K,\Om)\le \ca_k(K,\Om)\le \widetilde{\ca}_{k,2}(K,\Om).$$ 
It suffices to  prove
\beq\label{capeq}
\widetilde{\ca}_{k,2}(K,\Om)\le \widetilde{\ca}_{k,3}(K,\Om)\le \widetilde{\ca}_{k,1}(K,\Om).
\eeq

We still choose $\{K_j\}$ to be a sequence of compact subsets in $\Om$ with smooth boundaries $\p K_j$ such that
$$K_{j+1}\subset K_j,\ \ \ \bigcap_{j=1}^{\infty}K_j=K.$$
Then $u_j:=u_{k,K_j}^*=u_{k,K_j}\in C(\overline{\Om})$, $u_j\uparrow v$ with $v^*=u_{k,K}^*$, and $u_j\big|_{K_j}\equiv-1$. For any $j$, we have 
\beqs
\widetilde{\ca}_{k,2}(K,\Om)&\le& \int_{\Om}(dd^cu_j)^k\wedge\omega^{n-k}\\
&=&\int_{\Om}(-u_j)(dd^cu_j)^k\wedge\omega^{n-k}\le \int_{\Om}(-v_j)(dd^cv_j)^k\wedge\omega^{n-k},
\eeqs
where $v_j$ is an arbitrary function in $\pshko(\Om)$ such that $v_j\big|_{K_j}\le -1$. This implies $\widetilde{\ca}_{k,2}(K,\Om)\le \widetilde{\ca}_{k,3}(K_j,\Om)$. Then by Lemma \ref{compar}, we have
$$\widetilde{\ca}_{k,3}(K_{j+1},\Om)\le \int_{\Om}(-u_{j})(dd^cu_{j})^k\wedge\omega^{n-k}=\widetilde{\ca}_{k,1}(K_{j},\Om)=\ca_{k}(K_j,\Om).$$
Letting $j\to\infty$, \eqref{capeq} holds by the convergence of $\ca_{k}(K_j,\Om)$ and the weak continuity of $(-u_j)(dd^cu_j)^k\wedge\omega^{n-k}$.
\end{proof}

\vskip 20pt

\section{Capacitary estimates for level sets of plurisubharmonic functions}\label{capest}
In this section, we establish a capacitary estimate for level sets of $k$-plurisubharmonic functions, which will play 
an important role in the proof of Theorem \ref{main3}. The analogous estimate for the real Hessian equation was obtained by \cite{XZ}. This estimate is a generalization of the capacitary estimates for the Wiener capacity \cite[Chaper 7]{AH}.

First, we recall the {\it capacitary weak type inequality}
		\begin{align}\label{w-ineq}
		t^{k+1}\ca_k(\left\{z\in \Om\,\left|\,u(z)\le -t\right.\right\},\Om)\le \|u\|_{\pshko(\Om)}^{k+1},\ \ \ \forall t>0.
		\end{align}
This inequality was proved by \cite{ACKZ} for $k=n$, and by \cite{Lu} for general $k$.  We prove the {\it capacitary strong type inequality} as follows.
%{\color{blue}As indicated in Theorem 9 \cite{GKY}, using \eqref{w-ineq}, one can prove the boundedness of $\int_{\Om}\exp\left(\lam\left(\frac{-u}{\|u\|_{\psho(\Om)}}\right)^{q}\right)\,d\mu$ for $\lam<\beta$, where $\beta$ is the constant in \eqref{isocap}. To improve $\lam$, we need a stronger capacitary estimate.} 
\begin{theo}
	Suppose $u\in \pshko(\Om)\cap C^2(\Om)\cap C(\overline{\Om})$. For any $A>1$, we have
		\begin{align}\label{s-ineq}
		\int_0^{\infty}t^k\ca_k(\left\{z\in\Om\,\left|\,u<-t\right.\right\},\Om)\,dt\le \left(\frac{A}{A-1}\right)^{k+1}\log A \cdot\|u\|_{\pshko(\Om)}^{k+1}.
		\end{align}
		\end{theo}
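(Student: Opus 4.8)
The plan is to reduce the strong-type inequality \eqref{s-ineq} to the weak-type inequality \eqref{w-ineq} by a dyadic decomposition in the level parameter, exactly as in the classical Wiener-capacity argument of \cite[Chapter 7]{AH}. Fix $A>1$ and set $t_j = A^j$ for $j\in\Z$, so that the level sets $U_j := \{z\in\Om : u(z) < -t_j\}$ form a decreasing nested family. The left-hand integral splits as $\int_0^\infty t^k\,\ca_k(\{u<-t\},\Om)\,dt = \sum_{j\in\Z}\int_{t_j}^{t_{j+1}} t^k\,\ca_k(\{u<-t\},\Om)\,dt$, and on the interval $[t_j,t_{j+1}]$ we bound $\ca_k(\{u<-t\},\Om)\le \ca_k(U_j,\Om)$ and $\int_{t_j}^{t_{j+1}} t^k\,dt \le t_{j+1}^{k+1}\cdot\frac{A-1}{A}\cdot\frac{1}{k+1}\cdot$ (roughly; the precise constant is $\frac{t_{j+1}^{k+1}-t_j^{k+1}}{k+1}$). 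This gives a bound of the shape $\sum_j C(A)\, t_j^{k+1}\,\ca_k(U_j,\Om)$.

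The key point is then to estimate $\sum_j t_j^{k+1}\,\ca_k(U_j,\Om)$ by $\log A\cdot\|u\|^{k+1}_{\pshko(\Om)}$ up to the stated factor. Rather than applying the weak-type inequality termwise (which would lose a divergent factor), I would apply it to a \emph{rescaled and truncated} function adapted to the dyadic annulus $\{-t_{j+1}< u < -t_j\}$: consider $w_j := \max\{\min\{u+t_j,\,0\},\,-(t_{j+1}-t_j)\}$, normalized, which lies in $\pshko(\Om)$, equals $-(t_{j+1}-t_j)$ on $U_{j+1}$, and is supported where $u<-t_j$. Applying \eqref{w-ineq} to $w_j$ at the level $t = t_{j+1}-t_j$ yields $(t_{j+1}-t_j)^{k+1}\,\ca_k(U_{j+1},\Om)\le \|w_j\|^{k+1}_{\pshko(\Om)} = \int_{\{-t_{j+1}<u<-t_j\}}(-w_j)(dd^cw_j)^k\wedge\omega^{n-k} + (\text{boundary contributions from }U_{j+1})$. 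Summing in $j$, the energies of the $w_j$ localize to disjoint annuli and telescope against $\int_\Om(-u)(dd^cu)^k\wedge\omega^{n-k}$, while the factor $(t_{j+1}-t_j)^{k+1} = t_j^{k+1}(A-1)^{k+1}$ produces the constant $\left(\frac{A}{A-1}\right)^{k+1}$ after comparison with $t_{j+1}^{k+1}$, and the number of dyadic scales contributing to a fixed energy range produces the $\log A$.

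The main obstacle I anticipate is making the ``localization of the energy to disjoint annuli'' rigorous: the Hessian measure $(dd^cw_j)^k\wedge\omega^{n-k}$ is not literally the restriction of $(dd^cu)^k\wedge\omega^{n-k}$ to $\{-t_{j+1}<u<-t_j\}$, because truncation $\max/\min$ operations change the measure. The standard fix is a careful integration-by-parts / comparison argument (using Lemma \ref{compar} and the $C^2$ regularity of $u$) showing $\int_\Om(-w_j)(dd^cw_j)^k\wedge\omega^{n-k}\le \int_{\{u<-t_j\}}(-u-t_j)\,(dd^cu)^k\wedge\omega^{n-k}$ plus controllable terms, and then that these right-hand sides, summed over $j$, are dominated by a fixed multiple of $\mathcal E_k(u)$; one typically uses the identity $\int_\Om(-u)(dd^cu)^k\wedge\omega^{n-k} = \sum_j\int_{\{-t_{j+1}\le u\le -t_j\}}(-u)(dd^cu)^k\wedge\omega^{n-k}$ together with the monotonicity $t_j\le -u\le t_{j+1}$ on each piece to extract the geometric-series bound with the claimed constant. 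A secondary technical point is justifying convergence of the $j\to-\infty$ tail, which follows from $u$ being bounded (as $u\in C(\bom)$ with $u|_{\pom}=0$, so $U_j=\emptyset$ for $t_j$ large and the small-$t$ tail is integrable against $t^k$).
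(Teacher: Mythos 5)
Your overall strategy---reduce the strong-type inequality to the weak-type \eqref{w-ineq} via rescaled truncations of $u$, and bound their energies through the identity $\mathcal E_k(v)=\int_\Om dv\wedge d^cv\wedge(dd^cv)^{k-1}\wedge\omega^{n-k}$---is the right blueprint, and you have correctly flagged the genuine technical obstacle: the $\max/\min$ truncation changes the Hessian measure, so the energy of the truncated function is not literally the restriction of $\mathcal E_k(u)$ to the level-set annulus. The paper handles this exactly as you anticipate, by an integration by parts on $\Om_t\setminus K_{At}$ combined with the sign condition $\frac{\p\tilde u^t}{\p\nu}\ge0$ on $\p K_{At}$.

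However, the dyadic discretization $t_j=A^j$ cannot produce the stated constant $\left(\frac{A}{A-1}\right)^{k+1}\log A$, and your heuristic for where $\log A$ comes from (``the number of dyadic scales contributing to a fixed energy range'') is not correct. With ratio $A$, the annuli $\{t_j< -u< t_{j+1}\}$ are pairwise disjoint, so $\sum_j\mathcal E_k(w_j)\le\mathcal E_k(u)$ with no overlap and hence no $\log A$ anywhere. Carrying out your scheme carefully, the weak-type inequality applied to each $w_j$ gives $\sum_m A^{m(k+1)}\ca_k(K_{t_m},\Om)\le\frac{A^{k+1}}{(A-1)^{k+1}}\mathcal E_k(u)$, while the dyadic majorization of the integral yields
\begin{align*}
\int_0^\infty t^k\ca_k(\{u<-t\},\Om)\,dt\ \le\ \frac{A^{k+1}-1}{k+1}\sum_m A^{m(k+1)}\ca_k(K_{t_m},\Om)\ \le\ \frac{A^{k+1}(A^{k+1}-1)}{(k+1)(A-1)^{k+1}}\,\|u\|_{\pshko(\Om)}^{k+1}.
\end{align*}
Since $\frac{A^{k+1}-1}{k+1}=\int_1^A s^k\,ds>\int_1^A\frac{ds}{s}=\log A$ for every $A>1$, this constant is strictly worse than the one in the statement (and grows like $A^{k+1}$ instead of $\log A$ as $A\to\infty$). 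This is still a finite constant and suffices for the applications later in the paper, but it does not prove the theorem as written.

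The paper obtains $\log A$ by parameterizing the truncations \emph{continuously} in $t$: it sets $\tilde u^t:=\max\left\{\frac{u+t}{(A-1)t},-1\right\}$ and first establishes by Fubini that $\int_0^\infty\phi(\Om_t\setminus K_{At})\,\frac{dt}{t}\le\log A$, where $\phi$ is the normalized energy density. The point is that each $z$ with $u(z)<0$ lies in $\Om_t\setminus K_{At}$ exactly for $t\in\bigl(-u(z)/A,\,-u(z)\bigr)$, an interval of $dt/t$-measure $\log A$; it is this \emph{overlap} of the continuously sliding annuli, not the number of dyadic scales, that creates the $\log A$. After that, $\ca_k(K_{At},\Om_t)\le\mathcal E_k(\tilde u^t)\le(A-1)^{-(k+1)}t^{-(k+1)}\phi(\Om_t\setminus K_{At})\,\|u\|^{k+1}$, and integrating against $t^k\,dt$ and substituting $\lambda=At$ gives the claimed constant. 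To repair your argument, either switch to this continuous parameterization or accept the larger constant and prove a nominally weaker statement.
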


\begin{proof} 
We use similar arguments as \cite{Ma} for the Laplacian and \cite{XZ} for the real Hessian case. 
	For $t>0$, denote 
	$$K_t:=\left\{z\in \Om\,\left|\,u(z)\le -t\right.\right\}, \ \Om_t:=\left\{z\in \Om\,\left|\,u(z)< -t\right.\right\}$$
	 and $v_t:=u_{k,K_t}^*$. For a Borel subset $E\subset \Om$, we denote
	$$\phi(E):=\frac{\int_E(-u)(dd^cu)^k\wedge\omega^{n-k}}{\int_{\Om}(-u)(dd^cu)^k\wedge\omega^{n-k}}.$$
	For $A>1$, 
	\begin{align}
	\int_0^{\infty}\phi(\Om_t\setminus K_{At})\,\frac{dt}{t} \le &  \int_0^{\infty}\phi(K_t\setminus K_{At})\,\frac{dt}{t}  \nonumber \\
	= & \int_0^{\infty}\left(\int_t^{At}\frac{d\phi(K_s)}{ds}\,ds\right)\,\frac{dt}{t}   \nonumber\\
	= & \int_0^{\infty}\left(\int_s^{\frac{s}{A}}\frac{dt}{t}\right)\frac{d\phi(K_s)}{ds}\,ds  \nonumber \\
	= & -\log A\int_0^{\infty}\frac{d\phi(K_s)}{ds}\,ds  \nonumber\\
	= & \lim_{t\to 0^+}\phi(K_t)\cdot\log A   \nonumber \\
	\le & \log A.  \nonumber
	\end{align}
	This implies
	\begin{eqnarray}\label{key-1}
	\int_0^{\infty}\left\|u\cdot \chi_{\Om_t\setminus K_{At}}\right\|_{\pshko(\Om)}^{k+1}\, \frac{dt}{t}&=& \int_{0}^{\infty}\left(\int_{\Om_t\setminus K_{At}}(-u)(dd^cu)^k\wedge\omega^{n-k}\right)\,\frac{dt}{t} \nonumber\\[4pt]
	&\le& \|u\|_{\pshko(\Om)}^{k+1}\log A.
	\end{eqnarray}
	Then for $\forall t>0$, we consider 
	$$u^t:=\frac{u+t}{(A-1)t},\ \ \ \tilde{u}^t:=\max\{u^t, -1\}.$$
It is clear that	$u^t,\tilde u^t\in \pshko(\Om_t)\cap C^{0,1}(\overline\Om_t)$ and $\tilde u^t= -1$ on $K_{At}$.
We have
	\begin{align*}
	\|\tilde u^t\|_{\pshko(\Om_t)}^{k+1}= & \int_{\Om_t}(-\tilde{u}^t)(dd^c\tilde{u}^t)^k\wedge\omega^{n-k}  \\
	= & \int_{\Om_t}d\tilde{u}^t\wedge d^c\tilde{u}^t\wedge (dd^c\tilde{u}^t)^{k-1}\wedge\omega^{n-k}  \\
	= & \int_{\Om_t\setminus K_{At}}d\tilde{u}^t\wedge d^c\tilde{u}^t\wedge (dd^c\tilde{u}^t)^{k-1}\wedge\omega^{n-k}   \\
	\le & \int_{\Om_t\setminus K_{At}}\left(-\frac{u}{(A-1)t}\right)(dd^c\tilde{u}^t)^{k}\wedge\omega^{n-k} \\
	= & (A-1)^{-k-1}t^{-k-1}\int_{\Om_t\setminus K_{At}}\left(-u\right)(dd^cu)^k\wedge\omega^{n-k},
	\end{align*}
	where we have used $\frac{\p \tilde{u}^t}{\p \nu}\ge 0$ almost everywhere on $\p K_{At}$ for $t>0$ at the fourth line. That is,
	\begin{eqnarray*}
\int_{\Om_t}(-\tilde{u}^t)(dd^c\tilde{u}^t)^k\wedge\omega^{n-k}\le (A-1)^{-k-1}t^{-k-1}\phi(\Om_t\setminus K_{At})\cdot \|u\|_{\pshko(\Om)}^{k+1}.
	\end{eqnarray*}
	Now we denote by $\hat v_t:=u_{k,K_{At},\Om_t}^*$ the relatively extremal function of $K_{At}$ with respect to $\Om_t$.  Note that $\hat v_t\ge \tilde{u}^t$ in $\Om_t$, and $\hat v_t=\tilde{u}^t=0$ on $\p \Om_t$. By comparison principle, we have
	\begin{align}\label{key-2}
	\ca_k(K_{At},\Om_t)=  & \int_{K_{At}}(-\hat v_t)(dd^c\hat v_t)^k\wedge\omega^{n-k}  
	\le  \int_{\Om_t}(-\tilde{u}^t)(dd^c\tilde{u}^t)^k\wedge\omega^{n-k}  \nonumber\\
%	\le & \int_{K_{At}}(-\tilde{u}^t)(dd^c\tilde{u}^t)^k\wedge\omega^{n-k}+\frac{1}{A-1}\int_{\Om_t\setminus K_{At}}(dd^c\tilde{u}^t)^k\wedge\omega^{n-k}  \\
%	\le  & \|\tilde u^t\|_{\pshko(\Om_t)}^{k+1}+\frac{1}{A-1}\int_{\Om_t\setminus K_{At}}(dd^c\tilde{u}^t)^{k}\wedge\omega^{n-k} \nonumber \\
	\le & (A-1)^{-k-1}t^{-k-1}\phi(\Om_t\setminus K_{At})\cdot \|u\|_{\pshko(\Om)}^{k+1}.
	\end{align}
	Finally, by \eqref{key-1}, \eqref{key-2} with $\lambda=At$, we obtain
	\begin{eqnarray*}
	\int_0^{\infty}\lambda^k \ca_k(K_\lambda,\Om)\,d\lambda &\le & A^{k+1}\int_0^{\infty}t^k \ca_k(K_{At},\Om_t)\,dt  \\
	&\le & A^{k+1}(A-1)^{-k-1}\log A \cdot\|u\|_{\pshko(\Om)}^{k+1}.
	\end{eqnarray*}
\end{proof}

\vskip 10pt

\section{The Trace Inequalities}\label{mainpf}

 In this section, we are going to prove the trace inequalities  in Theorem \ref{main3}.
  \subsection{The Sobolev type trace inequality(the case $0<p<k+1$).} First, we show $I_{k,p}(\Omega,\mu)<+\infty$
  implies the Sobolev type inequality.
For $u\in \mathcal F_k(\Om)$, denote 
 $$S_{k,p}(\mu,u):=\sum_{j=-\infty}^{\infty}\frac{[\mu(K_{2^j}^u)-\mu(K_{2^{j+1}}^u)]^{\frac{k+1}{k+1-p}}}{\ca_k(K_{2^j}^u,\Omega)^{\frac{p}{k+1-p}}},\ \ \ K_s^u:=\{u\le -s\}.$$
By the elementary inequality $a^c+b^c\le (a+b)^c$ for $a$, $b\ge 0$ and $c\ge 1$, we get
\begin{align*}
S_{k,p}(\mu,u) 
\le & \sum_{j=-\infty}^{\infty}\frac{[\mu(K_{2^j}^u)-\mu(K_{2^{j+1}}^u)]^{\frac{k+1}{k+1-p}}}{[\nu_k(2^j,\Omega,\mu)]^{\frac{p}{k+1-p}}}  \\
\le & \sum_{j=-\infty}^{\infty}\frac{[\mu(K_{2^j}^u)]^{\frac{k+1}{k+1-p}}-[\mu(K_{2^{j+1}}^u)]^{\frac{k+1}{k+1-p}}}{[\nu_k(2^j,\Omega,\mu)]^{\frac{p}{k+1-p}}}  \\
\le & \int_0^{\infty}\frac{1}{[\nu_k(s,\Omega,\mu)]^{\frac{p}{k+1-p}}}\,d\left(s^{\frac{k+1}{k+1-p}}\right)  
=  \frac{k+1}{k+1-p}I_{k,p}(\Omega,\mu).
\end{align*}
Then by the strong capacitary inequality \eqref{s-ineq} with $A=n$ and by integration by parts, 
\begin{align*}
\int_{\Om}(-u)^p\,d\mu  
= & \int_{\Om}\int_0^{\infty}\chi_{[0,-u]}(s^p)\,d\left(s^p\right)\,d\mu  \\
= & \int_0^{\infty}\mu(K_s^u)d\left(s^p\right)  \\
= & \int_0^{\infty} s^p\,d\mu(K_s^u)  \\
\le & \sum_{j=-\infty}^{\infty}2^{(j+1)p}\cdot [\mu(K_{2^j}^u)-\mu(K_{2^{j+1}}^u)]  \\
\le & {S_{k,p}(\mu,u)}^{\frac{k+1-p}{k+1}}\cdot\left(\sum_{j=-\infty}^{\infty}2^{j(k+1)}\ca_k(K_{2^{j(k+1)}}^u,\Om)\right)^{\frac{p}{k+1}}  \\
\le & (k+1){S_{k,p}(\mu,u)}^{\frac{k+1-p}{k+1}}\cdot \left(\int_0^{\infty}s^k\ca(K_s^u,\Om)\,ds\right)^{\frac{p}{k+1}}  \\
\le & C(n,k,\mu,p)\cdot\|u\|_{\pshko(\Om)}^p,
\end{align*}
thereby completing the proof.\qed

On the other hand, suppose there exists $p\in(0,k+1)$ such that the inequality \eqref{sobolev1} holds.
That is, for any $u\in\F_k(\Om)$
$$\sup_{t>0}\{t\mu(K_t^u)^{\frac{1}{p}}\}\le \|u\|_{L^p(\Om,\mu)}\le C\|u\|_{\pshko(\Om)}.$$
By the definition of $\nu_k$, for any integer $j<\frac{\log \mu(\Om)}{\log 2}$, we can choose a compact subset $K_j\subset \Om$  such that 
\begin{eqnarray*}
\mu(K_j)\ge 2^j,  \ \ca_k(K_j,\Om)\le 2\nu_k(2^j,\Om,\mu).
\end{eqnarray*}
Then by Lemma \ref{equal}, we can choose  $u_j\in\F_k(\Om)$ such that
\begin{eqnarray*}
u_j\big|_{K_j}\le -1,\
\cE_k(u_j)\le 2\ca_k(K_j,\Om).
\end{eqnarray*}
Then for integers $i$, $m$ with $-\infty< i< m$ and $\frac{\log \mu(\Om)}{\log 2}\in (m,m+1]$, let
$$\gamma_j:=\left(\frac{2^j}{\nu_k(2^j,\Omega,\mu)}\right)^{\frac{1}{k+1-p}}, \ \ \forall j\in [i,m] \text{\ and \ }  u_{i,m}:=\sup_{i\le j\le m}\{\gamma_ju_j\}.$$
Since $u_{i,m}\in \F_k(\Om)\cap C(\overline{\Om})$, we have
$$\cE_k(u_{i,m})\le C_{n,k}\sum_i^m\gamma_j^{k+1}\cE_k(u_j)\le C_{n,k}\sum_i^m\gamma_j^{k+1}\nu_k(2^j,\Omega,\mu).$$
Note that for $i\le j\le m$ we have 
$$2^j<\mu(K_j)\le \mu(K_{\gamma_j}^{u_{i,m}}).$$
This implies
\begin{eqnarray*}
\|u_{i,m}\|^p_{\pshko(\Om)}&\ge & C^{-p}C_{n,k,p}\int_{\Om}|u_{i,m}|^p\,d\mu  \\
&\ge & C\int_0^\infty\left(\inf\left\{t\,\left|\,\mu(K_t^{u_{i,m}})\le s\right.\right\}\right)^p\,ds  \\
&\ge & C\sum_{j=i}^m \left(\inf\left\{t\,\left|\,\mu(K_t^{u_{i,m}})\le 2^j\right.\right\}\right)^p \cdot 2^j  \\
&\ge & C\sum_{j=i}^m\gamma_j^p2^j  \\
&\ge & C\frac{\sum_{j=i}^m\gamma_j^p2^j}{\sum_{j=i}^m\gamma_j^{k+1}\nu_k(2^j,\Omega,\mu)^{\frac{p}{k+1}}}
\|u_{i,m}\|^p_{\pshko(\Om)}  \\
&\ge & C\frac{\sum_{j=i}^m2^{\frac{j(k+1)}{k+1-p}}\nu_k(2^j,\Omega,\mu)^{-\frac{p}{k+1-p}}}{\left(\sum_{j=i}^m2^{\frac{j(k+1)}{k+1-p}}\nu_k(2^j,\Omega,\mu)^{-\frac{p}{k+1-p}}\right)^{\frac{p}{k+1}}}
\|u_{i,m}\|^p_{\pshko(\Om)}  \\
&= & C\left(\sum_{j=i}^m2^{\frac{j(k+1)}{k+1-p}}\nu_k(2^j,\Omega,\mu)^{-\frac{p}{k+1-p}}\right)^{\frac{k+1-p}{k+1}} \|u_{i,m}\|^p_{\pshko(\Om)} .
\end{eqnarray*}
Consequently,
\begin{eqnarray*}
I_{k,p}(\Omega,\mu)&\le &  \lim_{i\to -\infty}\sum_{j=i}^m 2^{\frac{(j+1)(k+1)}{k+1-p}}\nu_k(2^j,\Omega,\mu)^{-\frac{p}{k+1-p}}+\int_{2^m}^{\mu(\Om)}  \left(\frac{s}{\nu_k(s,\Omega,\mu)}\right)^{\frac{p}{k+1-p}}\,ds\\
&\le & (1+\mu(\Om))\lim_{i\to -\infty}\sum_{j=i}^m 2^{\frac{(j+1)(k+1)}{k+1-p}}\nu_k(2^j,\Omega,\mu)^{-\frac{p}{k+1-p}}<+\infty.\qed
\end{eqnarray*}

 \subsection{The Sobolev type trace inequality(the case $p\ge k+1$).}
 First, we assume $I_{k,p}(\Omega,\mu)<+\infty$ and prove the Sobolev type inequality.  
For $u\in \mathcal F_k(\Om)$, we have
\begin{align*}
\int_{\Om}(-u)^p\,d\mu= &   p\int_0^{\infty}\mu(K_{t})t^{p-1}\,dt  \\
\le & pI_{k,p}(\Omega,\mu)\int_0^{\infty}{\ca_k(K_t, \Om)}^{\frac{p}{k+1}}t^{p-1}\,dt  \\
= & pI_{k,p}(\Omega, \mu)\int_0^{\infty}t^{p-1-k}t^k{\ca_k(K_t, \Om)}^{\frac{p-1-k}{k+1}+1}\,dt   \\
\le & pI_{k,p}(\Omega,\mu)\cdot\|u\|_{\pshko(\Om)}^{p-1-k}\int_0^{\infty}t^k\ca_k(K_t,\Om)\,dt   \\
\le & \left[p I_{k,p}(\Omega,\mu)\left(\frac{A}{A-1}\right)^{k+1}\log A\right]\cdot\|u\|_{\pshko(\Om)}^{p}.
\end{align*}
The sufficient part is proved.
\vskip 10pt

On the contrary, let $u=u_{k,K}^*$ the relatively capacitary potential with respect to a compact set $K\subset \Om$, then
$$\mu(K)^{\frac{1}{p}}\le \|u_{k,K}^*\|_{L^p(\Om)}\le C\cdot\ca_k(K,\Om)^{\frac{1}{k+1}}.$$ 
By Remark \ref{rem} (c), $I_{k,p}(\Omega,\mu)<+\infty$.

\subsection{Compactness}
In this section, we are going to consider the compactness of the embedding induced by inequalities \eqref{sobolev1}.
First, we recall the Poincar\'e type inequality for complex Hessian operators.
\begin{theo}\label{quotient1}\cite{Hou, AC20}
Suppose $\Om$ is a pseudoconvex domain with smooth boundary, and $1\le l< k\le n$. Then there exists a uniform constant $C>0$
depending on $k$, $l$ and $\Om$ such that
	\begin{align}
\|u\|_{\psh_{l,0}(\Om)}\le C\|u\|_{\pshko(\Om)},\ \ \ \forall u\in \pshko(\Om).
	\end{align}

\end{theo}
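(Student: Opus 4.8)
\textbf{Proof plan for Theorem \ref{quotient1}.}
The statement is the Poincar\'e (or Sobolev quotient) inequality $\|u\|_{\psh_{l,0}(\Om)}\le C\|u\|_{\pshko(\Om)}$ for $l<k$, i.e. one controls a lower-order Hessian energy by a higher-order one with vanishing boundary data. Since the norms are $(l+1)$-th and $(k+1)$-th roots of the respective energies, this is equivalent to the energy estimate $\int_\Om(-u)(dd^cu)^l\wedge\omega^{n-l}\le C\big(\int_\Om(-u)(dd^cu)^k\wedge\omega^{n-k}\big)^{\frac{l+1}{k+1}}$. The plan is to prove the one-step inequality $l\mapsto l+1$ (that is, for consecutive exponents $m$ and $m+1$) and then iterate, since composing the constants gives the general $l<k$ case.

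First I would reduce to $u\in\pshko(\Om)\cap C^2(\Om)\cap C(\bom)$ by the defining approximation of $\F_k(\Om)$ and monotone convergence of the energies, so that all integration by parts is legitimate. For the one-step estimate, write $E_m(u)=\int_\Om(-u)(dd^cu)^m\wedge\omega^{n-m}=\int_\Om du\wedge d^cu\wedge(dd^cu)^{m-1}\wedge\omega^{n-m}$. The key algebraic input is the mixed-form Cauchy--Schwarz / Hodge-type inequality: for a fixed $k$-positive $u$, the bilinear form $(\alpha,\beta)\mapsto\int_\Om d\alpha\wedge d^c\beta\wedge(dd^cu)^{m-1}\wedge\omega^{n-m}$ type pairing — more precisely one applies the pointwise inequality relating $(dd^cu)^m\wedge\omega^{n-m}$, $(dd^cu)^{m-1}\wedge\omega^{n-m+1}$ and $(dd^cu)^{m+1}\wedge\omega^{n-m-1}$ (a Newton–Maclaurin / Gårding-type inequality for $k$-positive forms), which gives
\begin{align*}
\big((dd^cu)^m\wedge\omega^{n-m}\big)^2\le (dd^cu)^{m-1}\wedge\omega^{n-m+1}\cdot(dd^cu)^{m+1}\wedge\omega^{n-m-1}
\end{align*}
as measures, valid since $m+1\le k$. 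Integrating $(-u)$ against this, together with the elementary bound $\omega^{n-m+1}\le C\,\omega^{n-m}$ on the bounded domain $\Om$ (absorbing a constant depending on $\mathrm{diam}\,\Om$, $n$, $m$) and a Cauchy--Schwarz in the integral, yields
\begin{align*}
E_m(u)^2\le C\,E_{m-1}(u)\cdot E_{m+1}(u).
\end{align*}

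Then I would run the standard convexity/interpolation argument: the log-convexity relation $E_m(u)^2\le C E_{m-1}(u)E_{m+1}(u)$ together with the base case $E_0(u)=\int_\Om(-u)\,\omega^n\le C\,E_1(u)$ (the ordinary Sobolev/Poincar\'e inequality for the Laplacian with zero boundary values, or equivalently $\|u\|_{L^1}\lesssim\|u\|_{\psh_{1,0}}$) lets one prove by induction that $E_l(u)^{1/(l+1)}\le C\,E_k(u)^{1/(k+1)}$ for all $l<k$; taking $(l+1)$-th roots gives the claimed norm inequality. The main obstacle is establishing the mixed Hessian measure inequality with $(-u)$ as a weight in the needed generality — one has to make sense of the products as positive currents for merely continuous (not smooth) $u$ via Bedford--Taylor theory and justify that the pointwise Gårding inequality passes to the wedge products and survives integration by parts against $(-u)\ge 0$; the boundedness of $\Om$ is what converts the extra power of $\omega$ into a harmless constant. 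Since this is a known inequality (it is precisely the content cited to \cite{Hou, AC20}), I would invoke their computation for the one-step bound and present the iteration explicitly.
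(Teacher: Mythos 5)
The paper does not give its own proof of this theorem; it is stated with the citation \cite{Hou, AC20}, so the comparison below is between your sketch and the actual arguments in those references.

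Your plan has a decisive gap: the key pointwise inequality you invoke is false, and in fact goes the wrong way. You claim, as a Newton--Maclaurin/G\aa{}rding type fact,
\begin{align*}
\bigl((dd^cu)^m\wedge\omega^{n-m}\bigr)^2\le (dd^cu)^{m-1}\wedge\omega^{n-m+1}\cdot(dd^cu)^{m+1}\wedge\omega^{n-m-1},
\end{align*}
i.e.\ $\sigma_m(\lambda)^2\lesssim \sigma_{m-1}(\lambda)\sigma_{m+1}(\lambda)$ for $\lambda$ the eigenvalues of $dd^cu$ relative to $\omega$. Newton's inequality gives precisely the reverse: $\sigma_m^2\ge c_{n,m}\,\sigma_{m-1}\sigma_{m+1}$, with $c_{n,m}\ge 1$. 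Concretely, take $\lambda=(1,\varepsilon,0,\dots,0)$ with $\varepsilon\downarrow 0$ (this lies in the closure of the $2$-cone, so it is attainable for $k\ge 2$, $m=1$): then $\sigma_1^2\approx 1$ while $\sigma_0\sigma_2=\varepsilon\to 0$, so no constant can make your inequality hold. The integral analogue fails for the same structural reason: the mixed–Monge--Amp\`ere/Khovanskii--Teissier (Alexandrov--Fenchel) inequality states log-\emph{concavity} of $m\mapsto\int T\wedge S^{m-1}\wedge\omega^{n-m}$ in the exponent, i.e.\ $\mathcal E_m(u)^2\ge \mathcal E_{m-1}(u)\mathcal E_{m+1}(u)$, again opposite to the $\mathcal E_m^2\le C\mathcal E_{m-1}\mathcal E_{m+1}$ you need for the iteration. (Your stated base case $E_0\le CE_1$ is also not scale invariant --- $E_0$ scales like $t$ and $E_1$ like $t^2$ under $u\mapsto tu$ --- so it cannot hold as written; presumably you meant $E_0^2\le CE_1$, which is the Poincar\'e inequality. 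That part is a typo, but the main step is not.)

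Because the one-step interpolation inequality you want to iterate is the reverse of what actually holds, the entire reduction collapses; you cannot repair it by ``invoking their computation for the one-step bound,'' because that computation does not exist --- the Hou and \AA{}hag--Czyz proofs do not proceed through such a log-convexity relation. Hou's argument is a variational/convexity analysis of the Hessian integral functional itself (not of the family $m\mapsto\mathcal E_m$), and \AA{}hag--Czyz's complex-Hessian version uses a different potential-theoretic route. If you want to present a self-contained argument, you would need to follow one of those proofs rather than a Newton-type interpolation; as it stands, the ``key algebraic input'' in your sketch is the exact opposite of the true inequality and the proof does not go through.
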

\begin{cor}\label{ccor}
	Suppose $K$ is a compact subset of $\Om$ with smooth boundary $\p K$, and $1\le l < k\le n$. Then we have 
\begin{align}\label{P}
	\ca_l(K,\Om)\le C\cdot {\ca_k(K,\Om)}^{\frac{l+1}{k+1}}.
\end{align}
\end{cor}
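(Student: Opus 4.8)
The plan is to deduce the capacity inequality \eqref{P} directly from the Poincar\'e type inequality in Theorem \ref{quotient1}, using the characterization of the $k$-capacity via the relatively extremal function established in Section \ref{pre}. Concretely, let $K\Subset\Om$ be a compact subset with smooth boundary $\p K$. Because $\p K$ is smooth, the relatively $k$-extremal function $u:=u_{k,K}^*=u_{k,K,\Om}$ is continuous on $\bom$, satisfies $-1\le u\le 0$, equals $-1$ on $K$, and lies in $\pshko(\Om)$ by Lemma \ref{exb}; moreover $u$ is $k$-plurisubharmonic, hence also $l$-plurisubharmonic since $l<k$, so $u$ is an admissible competitor for $u_{l,K,\Om}$. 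This gives $u\le u_{l,K}^*\le 0$ with $u\le -1$ on $K$.

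First I would express $\ca_k(K,\Om)$ in terms of the energy of $u$: by Lemma \ref{cap-extre} together with Lemma \ref{equal}, $\ca_k(K,\Om)=\widetilde{\ca}_{k,1}(K,\Om)=\int_K(-u)(dd^cu)^k\wedge\omega^{n-k}$, and since $u$ is the relatively extremal function the Hessian measure $(dd^cu)^k\wedge\omega^{n-k}$ is supported on $K$ where $u=-1$, so in fact $\ca_k(K,\Om)=\cE_k(u)=\|u\|_{\pshko(\Om)}^{k+1}$. Next I would bound $\ca_l(K,\Om)$ from above by the energy of the \emph{same} function $u$: since $u$ is $l$-plurisubharmonic with $u|_K\le -1$ and $u\in\psh_{l,0}(\Om)$, using the comparison-type characterization $\widetilde{\ca}_{l,2}(K,\Om)=\ca_l(K,\Om)$ from Lemma \ref{equal} we get $\ca_l(K,\Om)\le\int_\Om(dd^cu)^l\wedge\omega^{n-l}=\cE_l(u)=\|u\|_{\psh_{l,0}(\Om)}^{l+1}$, where the middle equality again uses that $u$ is itself an extremal-type function so $(-u)$ may be dropped/inserted on the set $\{u=-1\}$. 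Finally, applying Theorem \ref{quotient1} to $u\in\pshko(\Om)$ gives $\|u\|_{\psh_{l,0}(\Om)}\le C\|u\|_{\pshko(\Om)}$, and raising to the power $l+1$ yields
\begin{align*}
\ca_l(K,\Om)\le\|u\|_{\psh_{l,0}(\Om)}^{l+1}\le C^{l+1}\|u\|_{\pshko(\Om)}^{l+1}=C^{l+1}\,\ca_k(K,\Om)^{\frac{l+1}{k+1}},
\end{align*}
which is \eqref{P} after renaming the constant.

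The main obstacle I anticipate is the bookkeeping needed to justify that the same function $u=u_{k,K}^*$ computes both $\ca_k(K,\Om)$ as its $k$-energy and dominates $\ca_l(K,\Om)$ by its $l$-energy; this requires knowing that the $k$-Hessian measure of the relatively extremal function lives on $K$ (so that $(-u)$ equals $1$ there) and that $u$, being $k$-psh, is genuinely $l$-admissible — both are standard but must be invoked carefully, particularly the passage through the equivalent capacities $\widetilde{\ca}_{l,2}$ and $\widetilde{\ca}_{k,1}$ from Lemma \ref{equal}. A secondary point is that Theorem \ref{quotient1} is stated for $u\in\pshko(\Om)$ while a priori one only knows $u\in\F_k(\Om)$; since $\p K$ is smooth we have $u\in\pshko(\Om)\cap C(\bom)$, so this causes no difficulty, but if one wanted \eqref{P} for general compact $K$ one would approximate $K$ by smooth $K_j\downarrow K$ as in the proofs of Lemma \ref{cap-extre} and Lemma \ref{equal} and pass to the limit using continuity of both capacities.
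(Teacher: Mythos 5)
Your proof rests on the same two ingredients as the paper's --- the equivalent capacities of Lemma \ref{equal} and the Poincar\'e type inequality of Theorem \ref{quotient1} --- and the overall idea is right. The paper, however, never singles out the extremal function: it fixes an \emph{arbitrary} competitor $u\in\pshko(\Om)$ with $u\le-\chi_K$, observes that $\pshko(\Om)\subset\psh_{l,0}(\Om)$ makes $u$ admissible for $\widetilde{\ca}_{l,3}(K,\Om)$, deduces $\ca_l(K,\Om)=\widetilde{\ca}_{l,3}(K,\Om)\le\cE_l(u)\le C\,\cE_k(u)^{\frac{l+1}{k+1}}$ directly from Theorem \ref{quotient1}, and takes the infimum over $u$ to close the loop with $\widetilde{\ca}_{k,3}(K,\Om)=\ca_k(K,\Om)$. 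That infimum avoids any discussion of where the Hessian measure of $u_{k,K}^*$ is supported and also sidesteps the smoothness of $\p K$.

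There is one genuine, though easily repaired, gap in your version. You go through $\widetilde{\ca}_{l,2}$, writing $\ca_l(K,\Om)\le\int_\Om(dd^cu)^l\wedge\omega^{n-l}=\cE_l(u)$ and justifying the equality by ``$(-u)$ may be dropped/inserted on the set $\{u=-1\}$.'' But for $u=u_{k,K}^*$ what is known to vanish on $\Om\setminus K$ is the $k$-Hessian measure $(dd^cu)^k\wedge\omega^{n-k}$, not the $l$-Hessian measure. For $l<k$ the measure $(dd^cu)^l\wedge\omega^{n-l}$ may well charge $\Om\setminus K$, where $-1<u<0$, and there $\cE_l(u)=\int_\Om(-u)(dd^cu)^l\wedge\omega^{n-l}\le\int_\Om(dd^cu)^l\wedge\omega^{n-l}$ --- the inequality runs the wrong way for your chain, so the ``middle equality'' is unjustified and in general false. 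The fix, as the paper does, is to use $\widetilde{\ca}_{l,3}$ instead: since $u\in\psh_{l,0}(\Om)$ with $u|_K\le-1$, the definition of $\widetilde{\ca}_{l,3}$ yields $\ca_l(K,\Om)=\widetilde{\ca}_{l,3}(K,\Om)\le\int_\Om(-u)(dd^cu)^l\wedge\omega^{n-l}=\cE_l(u)$ in one step, after which Theorem \ref{quotient1} and the identity $\cE_k(u_{k,K}^*)=\ca_k(K,\Om)$ finish as you describe.
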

\begin{proof}
Let $u\in \pshko(\Om)$ such that $u\le -\chi_K$. By $\pshko(\Om)\subset \psh_{l,0}(\Om)$, we have
\begin{eqnarray*}
\ca_l(K,\Om)=\widetilde{\ca}_{l,3}(K,\Om)&\le& \int_\Om(-u)(dd^cu)^l\wedge\omega^{n-l}\\
&\le& C\left[\int_{\Om}(-u)(dd^cu)^{k}\wedge\omega^{n-k}\right]^{\frac{l+1}{k+1}}.
\end{eqnarray*}
Then \eqref{P} follows by taking the infimum. 
\end{proof}

Now, we are in position to show the compactness of the Sobolev trace inequality. 
First we need the following compactness theorem for classical Sobolev embedding. 
\begin{theo}\label{Maz}\cite{Ma}
Suppose $p\ge 2$, the embedding
$$Id:\,W^{1,2}_0(\Om)\to L^{p}(\Om,\mu)$$
is compact if and only if
\begin{align}\label{cond-com-1}
\lim_{s\to 0}\frac{s}{\nu_1(s,\mu)^{\frac{p}{2}}}\to 0.
\end{align}
\end{theo}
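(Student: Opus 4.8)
\emph{Proof plan (following Maz'ya \cite{Ma}).}
For $k=1$ the cone $\mathcal{PSH}_1(\Om)$ is that of subharmonic functions on $\Om\subset\C^n\cong\R^{2n}$, the norm $\|\cdot\|_{\mathcal{PSH}_{1,0}(\Om)}$ coincides up to a dimensional constant with the Dirichlet norm $\|\nabla\cdot\|_{L^2(\Om)}$, and $\ca_1(\cdot,\Om)$ with the classical condenser capacity; I will use the capacitary weak type inequality \eqref{w-ineq} and strong type inequality \eqref{s-ineq} in the case $k=1$ (the classical ones of \cite{Ma} for $W^{1,2}_0(\Om)$). A reduction common to both implications: since $s\mapsto\nu_1(s,\Om,\mu)$ is non-decreasing, the vanishing condition $\lim_{s\to0}s/\nu_1(s,\Om,\mu)^{p/2}=0$ forces $\nu_1(\cdot,\Om,\mu)>0$ on $(0,\mu(\Om))$, and comparing the behaviour near $0$ with the estimate $\mu(\Om)/\nu_1(\tau,\Om,\mu)^{p/2}$ valid on $[\tau,\mu(\Om))$ gives $I_{1,p}(\Om,\mu)=\sup_{0<s<\mu(\Om)}s/\nu_1(s,\Om,\mu)^{p/2}<\infty$; by the computation of the subsection treating the case $p\ge k+1$ (with $k=1$) this yields a \emph{bounded} trace $W^{1,2}_0(\Om)\hookrightarrow L^p(\Om,\mu)$, so that $\mu$ charges no polar set (hence the trace is unambiguous on quasicontinuous representatives) and $\mu$ defines a bounded linear functional on $W^{1,2}_0(\Om)$. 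Conversely, compactness trivially gives boundedness, hence the same two consequences.

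\emph{Necessity.} Assume the embedding is compact and, for contradiction, that $\limsup_{s\to0}s/\nu_1(s,\Om,\mu)^{p/2}>0$: choose $s_m\downarrow0$ with $s_m/\nu_1(s_m,\Om,\mu)^{p/2}\ge c_0>0$ and compact $K_m\Subset\Om$ with $\mu(K_m)\ge s_m$ and $\ca_1(K_m,\Om)\le2\nu_1(s_m,\Om,\mu)$; monotonicity of $\nu_1$ forces $\ca_1(K_m,\Om)\to0$. Set $w_m:=u_{1,K_m}^*/\ca_1(K_m,\Om)^{1/2}$, so that $\|\nabla w_m\|_{L^2}$ is bounded and $|w_m|=\ca_1(K_m,\Om)^{-1/2}$ $\mu$-almost everywhere on $K_m$. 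Then $\int_{E_m}|w_m|^p\,d\mu\ge\ca_1(K_m,\Om)^{-p/2}\mu(K_m)\ge2^{-p/2}c_0$ on the set $E_m:=\{|w_m|\ge\ca_1(K_m,\Om)^{-1/2}\}\supseteq K_m$, while $\mu(E_m)\le C\,\ca_1(K_m,\Om)^{p/2}\to0$ by the bounded trace. Compactness yields a subsequence $w_{m_j}\to w$ in $L^p(\Om,\mu)$, whence $\int_{E_{m_j}}|w|^p\,d\mu=\int_{E_{m_j}}|w_{m_j}|^p\,d\mu+o(1)\ge\tfrac12\,2^{-p/2}c_0$ for $j$ large; but $\mu(E_{m_j})\to0$ and $|w|^p\in L^1(\Om,\mu)$ force the left-hand side to $0$ --- a contradiction.

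\emph{Sufficiency.} It suffices to show $\|f_n\|_{L^p(\Om,\mu)}\to0$ whenever $f_n\rightharpoonup0$ weakly in $W^{1,2}_0(\Om)$ with $\|\nabla f_n\|_{L^2}\le1$; by the Rellich--Kondrachov theorem, $f_n\to0$ in $L^2(\Om,dx)$. Fix $\eps>0$ and $\tau_\eps>0$ with $s/\nu_1(s,\Om,\mu)^{p/2}\le\eps$ for $0<s\le\tau_\eps$, equivalently $\mu(K)\le\eps\,\ca_1(K,\Om)^{p/2}$ for every $K\Subset\Om$ with $\mu(K)\le\tau_\eps$. Since $\ca_1(\{|f_n|>t\},\Om)\le C_n\,t^{-2}$ by \eqref{w-ineq}, there is a threshold $T_\eps$, independent of $n$, beyond which $\mu(\{|f_n|>t\})\le\tau_\eps$; repeating the computation of the $p\ge k+1$ subsection over $t\ge T_\eps$, with $\mu(K)\le\eps\,\ca_1(K,\Om)^{p/2}$ in place of the global isocapacitary bound and \eqref{s-ineq} at the last step, gives the uniform tail estimate $\int_{\{|f_n|>T_\eps\}}|f_n|^p\,d\mu\le C\eps$ for all $n$. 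For the complementary part, $\int_{\{|f_n|\le T_\eps\}}|f_n|^p\,d\mu\le p\int_0^{T_\eps}t^{p-1}\mu(\{|f_n|>t\})\,dt$; the integral over $(0,\delta)$ is at most $\mu(\Om)\,\delta^p$ uniformly in $n$, and on $[\delta,T_\eps]$ the integrand is dominated by $T_\eps^{p-1}\mu(\Om)$ and, as shown next, tends pointwise in $t$ to $0$, so it tends to $0$ by dominated convergence. Indeed, for fixed $t>0$ the capacitary cut-offs $g_n^t:=t^{-1}\min(|f_n|,t)$ belong to $W^{1,2}_0(\Om)$ with $0\le g_n^t\le1$, $g_n^t=1$ quasi-everywhere on $\{|f_n|\ge t\}$, $\|\nabla g_n^t\|_{L^2}\le1/t$, and $0\le g_n^t\le|f_n|/t\to0$ in $L^2(\Om,dx)$; hence $g_n^t\rightharpoonup0$ weakly in $W^{1,2}_0(\Om)$, and pairing with the functional $\mu\in\bigl(W^{1,2}_0(\Om)\bigr)^{*}$ gives $\mu(\{|f_n|\ge t\})\le\int_\Om g_n^t\,d\mu\to0$. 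Letting $n\to\infty$, then $\delta\to0$, then $\eps\to0$ yields $\|f_n\|_{L^p(\Om,\mu)}\to0$, so the embedding is compact.

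\emph{Main obstacle.} The boundedness statement and the necessity direction are routine once the capacitary inequalities are available; the delicate point is the low-level estimate in the sufficiency, and it is delicate exactly because $\mu$ need not be absolutely continuous with respect to Lebesgue measure, so Rellich--Kondrachov compactness of $W^{1,2}_0(\Om)\hookrightarrow L^q(\Om,dx)$ does not by itself produce convergence in $L^p(\Om,\mu)$ or $\mu$-almost-everywhere convergence. What rescues the argument is that boundedness of the trace makes $\mu$ a continuous functional on $W^{1,2}_0(\Om)$, so weak $W^{1,2}_0$-convergence is automatically tested against $\mu$; the remainder is bookkeeping with the dyadic and capacitary inequalities in the spirit of \cite{Ma}.
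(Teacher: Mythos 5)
This statement is not proved in the paper at all: Theorem \ref{Maz} is quoted from Maz'ya \cite{Ma} and used as a black box in the compactness part of Theorem \ref{main3}(i), so there is no in-paper argument to compare against. Your proposal is, in effect, a reconstruction of Maz'ya's classical proof, and as far as I can check it is correct. The two points you got right and that really matter are: (a) you work with the \emph{classical} weak and strong capacitary inequalities for arbitrary $W^{1,2}_0(\Om)$ functions rather than the paper's \eqref{w-ineq}--\eqref{s-ineq}, which are only stated for $k$-plurisubharmonic functions — this is unavoidable, since the embedding concerns all of $W^{1,2}_0(\Om)$, and the identification of $\ca_1$ and of $\|\cdot\|_{\mathcal{PSH}_{1,0}(\Om)}$ with the condenser capacity and the Dirichlet norm (up to dimensional constants) is what makes the transfer legitimate; and (b) in the sufficiency direction you resolve the genuinely delicate low-level part by observing that boundedness of the trace makes $\mu$ act as a bounded positive functional on $W^{1,2}_0(\Om)$ (on quasicontinuous representatives), so that the truncations $g_n^t\rightharpoonup 0$ give $\mu(\{|f_n|\ge t\})\to 0$ for each fixed $t$ — Rellich--Kondrachov alone would not suffice since $\mu$ may be singular. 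The necessity argument via rescaled equilibrium potentials and absolute continuity of the integral of $|w|^p$ is the standard one and is sound, given that compactness implies the isocapacitary bound and hence that $\mu$ charges no sets of zero capacity. Two small points you gloss over, both routine: the isocapacitary bound $\mu(K)\le \eps\,\ca_1(K,\Om)^{p/2}$ must be transferred from compact sets to the (quasi-open) level sets $\{|f_n|>t\}$ by inner regularity of $\mu$ and monotonicity of capacity; and the classical strong capacitary inequality you invoke is an external ingredient from \cite{Ma}, not something the paper's \eqref{s-ineq} provides — acceptable here, since the statement being proved is itself a cited classical theorem, but worth stating explicitly.
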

By Corollary \ref{ccor}, condition \eqref{cond-compact} implies \eqref{cond-com-1}.

\noindent{\it Proof of compactness in Theorem \ref{main3}(i).}
First, we consider the 'if' part.
For a sequence $u_j\in \mathcal F_k(\Omega)$ with bounded $\|u_j\|_{\pshko(\Omega)}$, we can obtain the boundedness of $\|u_j\|_{W^{1,2}_0(\Om)}$ by Theorem \ref{quotient1}.
Then by Theorem \ref{Maz}, we can find a subsequence $\{u_{j_l}\}$ such that  $u_{j_l}$ converges to $u$ in $L^{\frac{p+2}{2}}(\Om,\mu)$. Hence, we have
\begin{align}
\int_{\Om}|u_{j_l}-u|^p\,d\mu\le &  \left(\int_{\Om}|u_{j_l}-u|^{\frac{p+2}{2}}\,d\mu\right)^{\frac{1}{2}}\cdot \left(\int_{\Om}|u_{j_l}-u|^{\frac{3p-2}{2}}\,d\mu\right)^{\frac{1}{2}}  \nonumber\\
\le & C\|u_{j_l}-u\|_{L^{\frac{p+2}{2}}(\Om,\mu)}\cdot (\|u_{j_l}\|_{\pshko(\Om)}^{\frac{3p-2}{4}}+\|u\|^{\frac{3p-2}{4}}_{\pshko(\Om)})\to 0,  
\end{align}
thereby completing the proof.

For the 'only if' part, we assume $\pshko(\Om)$ is compactly embedded  into $L^p(\Om,\mu)$. Similar to the linear case \cite{Ma}, we will show that for any $s>0$, there exists $\eps(s)$ satisfying $\eps(s)\to 0$, as $s\to 0$, such that for any compact subset $K\subset\Om$ with $\mu(K)<s$,
it holds
\begin{align}\label{equi-norm}
\left(\int_K(-u)^p\,d\mu\right)^{\frac{1}{p}}\le \eps(s)\|u\|_{\pshko(\Om)}
\end{align}
holds for all $u\in \pshko(\Om)$.

We show this by contradiction. Assume there exists a sequence of compact subsets $K_i$ such that $\mu(K_i)\to 0$ and $\{u_j\}_{j=1}^{\infty}\subset \pshko(\Om)$, and a uniform constant $c>0$ such that
$$\int_{K_i}(-u_j)^p\,d\mu\ge c\|u_j\|_{\pshko(\Om)}^p.$$
By scaling, we may assume $\|u_j\|_{\pshko(\Om)}=1$. Then by the compactness of the embedding, there is a subsequence,   still denoted by $\{u_j\}$, which converges to $u\in L^p(\Om,\mu)$ in $L^p$-sense. Now we consider a sequence of cut-off functions $\eta_j\in C^{\infty}_0(\Om)$ such that $\text{supp}(\eta_j)\subset K_j$. Then by $\mu(K_i)\to 0$, $\eta_j^{\frac{1}{p}}u_j$ converges to $0$ in $L^p(\Om,\mu)$ sense. Hence, $\|u_j\|_{L^p(K_j,\mu)}\to 0$ as $j\to \infty$, which makes a contradiction.  

Finally, for any $K\Subset\Om $ with $\mu(K)<s$, by letting $u=u_{k,K}^*$ in the \eqref{equi-norm}, we have
$$\frac{\mu(K)}{\ca_k(K,\Om)^{\frac{p}{k+1}}}\le \eps(s)\to 0.\qed$$

\subsection{The Moser-Trudinger type trace inequality(Proof of Theorem \ref{main3}(ii))}
Let $\mu$ be a positive Randon measure,
\begin{eqnarray*}
&&\int_{\Om}\exp\left(\beta\left(\frac{-u}{\|u\|_{\psho(\Om)}}\right)^{q}\right)\,d\mu \\
&=&   \sum_{i=0}^{\infty}\frac{\beta^i}{i!}\int_{\Om}\left(\frac{-u}{\|u\|_{\psho(\Omega)}}\right)^{q i}\,d\mu  \\
&= & \sum_{i<\frac{n+1}{q}}\frac{\beta^i}{i!}\int_{\Om}\left(\frac{-u}{\|u\|_{\psho(\Omega)}}\right)^{q i}\,d\mu 
+\sum_{i\ge \frac{n+1}{q}}\frac{\beta^i}{i!}\int_{\Om}\left(\frac{-u}{\|u\|_{\psho(\Omega)}}\right)^{q i}\,d\mu   \\
&= :& I+II.
\end{eqnarray*}
Since $I_{n}(\beta,\Om,\mu)<+\infty$ implies $I_{k,p}(\Omega,\mu)$,  we have  $I\leq C$.
It suffices to estimate $II$. We have
\begin{align}\label{key-3}
II= & \sum_{i\ge \frac{n+1}{q}}\frac{\beta^i}{i!}\|u\|_{\psho(\Omega)}^{-q i}\int_{\Om}(-u)^{q i}\,d\mu   \nonumber\\
=  & \sum_{i\ge \frac{n+1}{q}}\frac{\beta^i}{i!}\|u\|_{\psho(\Omega)}^{-q i}\int_{0}^{\infty}t^{q i}\frac{d\mu(K_t)}{dt}\,dt  \nonumber\\
= & \sum_{i\ge \frac{n+1}{q}}\frac{\beta^i}{i!}\|u\|_{\psho(\Omega)}^{-q i}\int_0^{\infty}\mu(K_t)\,d(t^{q i})  \nonumber \\
\le & \sum_{i\ge \frac{n+1}{q}}\frac{\beta^i}{i!}\int_0^{\infty}\frac{\ca(K_t,\Om)}{t^{-n}\|u\|^{n+1}}\left(\frac{\mu(K_t)}{\ca(K_t,\Om)^{\frac{q i}{n+1}}}\right)\,dt    \nonumber\\
\le & \beta q \|u\|^{-n-1}\int_0^{\infty}\sum_{i=0}^{\infty}\frac{\beta^i}{i!}\left(\frac{\mu(K_t)}{\ca(K_t,\Om)^{\frac{q i}{n+1}}}\right)\,t^n \ca(K_t,\Om)\,dt \nonumber \\
= & \beta q \|u\|^{-n-1}\int_0^{\infty}\mu(K_t)\exp\left(\frac{\beta}{\ca(K_t,\Om)^{\frac{q}{n+1}}}\right)t^n \ca(K_t,\Om)\, dt, \nonumber \\
\le & \beta q I_{n}(\beta,\Omega,\mu)\left(\frac{A}{A-1}\right)^{k+1}\log A,\nonumber
\end{align}
where we have used \eqref{w-ineq} at the fourth line and \eqref{s-ineq} at the last line. \qed

\begin{rem}\label{rem1}
When the measure $d\mu$ is Lebesgue measure and $\Om=B_1$ is the unit ball centered at the origin,
as proved in \cite{K96}, there holds 
\beq\label{isomoser}
|K|\le C_{\lambda,\Omega,n}\exp\left\{-\frac{\lambda}{\ca_n(K,\Om)^{\frac{1}{n}}}\right\}
\eeq
for any $0<\lam<2n$.	
In particular, when $K=B_r$ with $r<1$, by standard computations,
$$|B_r|e^{\frac{\lambda}{\ca_n(B_r,\Om)^{\frac{1}{n}}}}=C_nr^{2n-\lam}.$$
Hence \eqref{isomoser} does not hold when $\lam>2n$. 
It is  natural to ask  if $\lam$ can attain the optimal constant $2n$.

As shown in \cite{BB}, for those $u\in \psho(\Om)$ with $\bS^1$-symmetry, i.e. $u(z)=u(e^{\sq \theta}z)$ for all $\theta\in\R^1$,
 \eqref{MT} holds with $\beta=2n$. Then by the proof of Theorem \ref{main3} (ii) with $\bS^1$-symmetry, there holds
$$|E|\le C\exp\left\{-\frac{2n}{\ca_n(E,\Om)^{\frac{1}{n}}}\right\}$$
for any $\bS^1$-invariant subset $E$.
Moreover,
by \cite{BB14}, the Schwartz symmetrization $\hat u$ of  $u_{E}^*$ is plurisubharmonic and has smaller $\|\cdot\|_{\psho(
\Omega)}$-norm. This leads to 
$$|E|e^{\frac{2n}{\ca_n(E,\Om)^{\frac{1}{n}}}}\le |B_r|e^{\frac{2n}{\ca_n(B_r,\Om)^{\frac{1}{n}}}}\equiv C_n|B_1|.$$

%Note that 
%	the capacity of $B_r(0)\subset B_1(0)=\Om$ can be written down explicitly. First, by direct computations, $u_{B_r}^*=-\frac{\log |z|^2}{\log r}$.
%	Then as shown in \cite{MM}, 
%	$$\ca(B_r,\Om)=\int_\Om(dd^cu_{B_r}^*)^n=n!\cdot \frac{1}{2n}\int_{\p \Om}\cK_{\p\Om}^{(n-1)}(z)\cdot |\p u|^n\,d\sigma_{\p\Om}=\left(\frac{\pi}{-\log r}\right)^n.$$
%	Therefore,
%	$$\mu(B_r)e^{\beta\left(\frac{1}{\ca(B_r,\Om)^{\frac{1}{n}}}\right)}=\frac{\pi^n}{n!}r^{2n-\frac{\beta}{\pi}}.$$
%	When $\beta>2n\pi$, it cannot be bounded above. It is very interesting if $\beta$ can achive the optimal constant $2n\pi$.
\end{rem}

\subsection{The Brezis-Merle type trace inequality}
Similar to \cite{BB}, we can obtain a relationship between the Brezis-Merle type trace inequality and the Moser-Trudinger type inequality.

\begin{theo}\label{remB}
The Moser-Trudinger type trace inequality \eqref{MT} holds for any $0<\lam<\beta$ 
for some  $\beta>0$ and $q\in [1,\frac{n+1}{n}]$ if and only if for any $0<\lam<\beta$, the Brezis-Merle type trace inequality
\begin{align}\label{BM}
\sup\left\{\int_{\Om}\exp\left(\lam\left(\frac{-u}{\cM[u]^{\frac{1}{n}}}\right)^{\frac{nq}{n+1}}\right)\,d\mu: u\in \mathcal F_n(\Omega), 0<\|u\|_{\psho(\Om)}<\infty\right\}<+\infty
\end{align}
holds.
Here $\cM[u]:=\int_{\Om}(dd^cu)^n$.
\end{theo}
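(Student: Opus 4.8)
The plan is to follow the strategy of \cite{BB}, exploiting the fact that the quantity $\cM[u] = \int_\Om (dd^cu)^n$ is comparable, but not equal, to $\|u\|_{\psho(\Om)}^{n+1} = \cE_n(u)$, and that the two inequalities \eqref{MT} and \eqref{BM} differ only by which of these two normalizations appears in the exponent. The elementary link is the capacitary weak type inequality \eqref{w-ineq} with $k=n$, which can be read as $\cM[\max\{u,-t\}] \le t^{-1}\cE_n(u)$ for every $t>0$, combined with the monotonicity $\cM[u]\le \cM[\max\{u,-t\}] + (\text{mass on }\{u<-t\})$; more directly, for $u\in\mathcal F_n(\Om)$ one has the comparison $\cE_n(u) \ge$ (roughly) $\|u\|_{L^\infty}\cdot\cM[u]$ on sublevel sets, so large values of $-u$ are controlled by $\cM[u]$ while the total energy controls the decay of the distribution function. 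The point is that replacing $\|u\|_{\psho(\Om)}^{n+1}$ by $\cM[u]$ in the exponent is a \emph{stronger} normalization on the tail, so \eqref{BM} $\Rightarrow$ \eqref{MT} is the easy direction after checking $\cM[u]\le \cE_n(u)^{?}\cdots$ fails in general and one must instead argue locally.

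First I would prove the implication \eqref{BM} $\Rightarrow$ \eqref{MT}. Given $u\in\mathcal F_n(\Om)$ normalized by $\|u\|_{\psho(\Om)}=1$, split $\Om = \{u>-t_0\}\cup\{u\le -t_0\}$ for a threshold $t_0$ to be chosen. On $\{u>-t_0\}$ the integrand in \eqref{MT} is bounded by $\exp(\beta t_0^q)$, contributing at most $\mu(\Om)\exp(\beta t_0^q)<\infty$. On the deep sublevel set, truncate: set $u_{t_0}:=\max\{u,-t_0\} - (-t_0)\cdot(\text{shift})$ — more precisely work with $v := \max\{u+t_0,\,? \}$ restricted to $\Om_{t_0}=\{u<-t_0\}$, which lies in $\psho(\Om_{t_0})$ with $\cM[v] = \int_{\Om_{t_0}}(dd^cu)^n \le t_0^{-n}\cE_n(u) = t_0^{-n}$ by \eqref{w-ineq} (here is where the weak type inequality enters). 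Then apply \eqref{BM} on the domain $\Om_{t_0}$ to $v$: since $\cM[v]^{1/n}\le t_0^{-1}$, the exponent $\lambda(-v/\cM[v]^{1/n})^{nq/(n+1)} \ge \lambda (t_0(-v))^{nq/(n+1)}$, and one checks that for $t_0$ large this dominates $\beta(-u)^q$ on $\Om_{t_0}$ for any prescribed $\lambda<\beta$ — this is just an elementary inequality between the two powers once $t_0$ is large, using $nq/(n+1)\le 1\le q$. Uniformity of the \eqref{BM} constant over subdomains must be extracted from the statement; this requires noting that \eqref{BM} for $\Om$ implies the same bound for any $\Om'\Subset\Om$ since $\psho(\Om')\hookrightarrow\mathcal F_n(\Om)$ via extension by a suitable exhaustion function, or alternatively re-deriving the capacitary inequality \eqref{cond-MT1} directly for $\Om_{t_0}$ from $I_n(\beta,\Om,\mu)<\infty$.

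For the converse \eqref{MT} $\Rightarrow$ \eqref{BM}, I would argue the contrapositive-style: if \eqref{MT} holds for all $\lambda<\beta$, then by Theorem \ref{main3}(ii), $I_n(\lambda,\Om,\mu)<\infty$ for all $\lambda<\beta$, equivalently the isocapacitary inequality \eqref{cond-MT1} holds with constant $I_n(\lambda,\Om,\mu)$. Then repeat verbatim the computation in the proof of Theorem \ref{main3}(ii) but expanding $\exp(\lambda(-u/\cM[u]^{1/n})^{nq/(n+1)})$ into its power series, bounding $\int_\Om (-u)^{ni/(n+1)}\,d\mu$ via the layer-cake formula and \eqref{cond-MT1}, and — crucially — using $\cM[u]$ rather than $\cE_n(u)^{n/(n+1)}$ as the homogenizing factor. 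The decisive estimate is again \eqref{w-ineq}: $t^n\cM[\{u<-t\}]\le t^{-1}\cE_n(u)$ must be traded for a bound in terms of $\cM[u]$ alone, and here one uses that on each level set $\ca_n(K_t,\Om)\le t^{-n}\cE_n(u)$ while the total $\cM[u]$ controls the normalization; the strong capacitary inequality \eqref{s-ineq} then closes the sum over $i$ exactly as in \eqref{key-3}. The main obstacle I expect is precisely this bookkeeping: making sure the homogeneity in $\cM[u]$ versus $\cE_n(u)$ matches on both sides so that the series $\sum_i \beta^i/i!$ still converges, and handling the uniformity of constants under passing to subdomains $\Om_t$ — both are the same technical heart as in \cite[Section 5]{BB}, now routed through the isocapacitary inequality \eqref{cond-MT1} instead of the thermodynamic/dimension-induction machinery used there.
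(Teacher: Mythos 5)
Your proposal correctly identifies the two directions and the general idea of routing everything through the isocapacitary inequality \eqref{cond-MT1}, but both halves diverge from the paper, and the direction BM $\Rightarrow$ MT has a genuine gap.

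For BM $\Rightarrow$ MT, your truncation-and-pointwise-comparison argument breaks down. You fix $t_0$, pass to $v$ on $\Om_{t_0}$ with $\cM[v]\le t_0^{-n}$, and claim that $\lam\big(t_0(-v)\big)^{\frac{nq}{n+1}}$ dominates $\beta(-u)^q$ on $\Om_{t_0}$ once $t_0$ is large. But $q\in[1,\tfrac{n+1}{n}]$ gives $q>\tfrac{nq}{n+1}$, so as $-u\to\infty$ (with $-v\approx -u-t_0$) the right side grows like $(-u)^q$ and the left only like $(-u)^{\frac{nq}{n+1}}$: the inequality reverses for $-u$ large, no matter how big $t_0$ is. The paper avoids this entirely. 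It applies \eqref{BM} to the single bounded function $u=u_{E}^{*}$ for a Borel set $E$; since $-1\le u_E^*\le 0$ and $\cM[u_E^*]=\ca_n(E,\Om)$, the integrand on $E$ is $\exp\big(\lam/\ca_n(E,\Om)^{\frac{q}{n+1}}\big)$, so \eqref{BM} immediately gives $\mu(E)\exp\big(\lam/\ca_n(E,\Om)^{\frac{q}{n+1}}\big)\le C$, i.e.\ \eqref{cond-MT1}, and Theorem \ref{main3}(ii) then yields \eqref{MT}. No splitting, truncation, or uniformity-over-subdomains issues arise.

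For MT $\Rightarrow$ BM, your sketch has the right shape (isocapacitary inequality, power series expansion, level-set integration) but misstates the decisive estimate. You write something like $t^n\cM[\{u<-t\}]\le t^{-1}\cE_n(u)$; what is actually needed, and what the paper uses, is the comparison-principle bound
\begin{equation*}
\cM[u]=\int_\Om (dd^cu)^n\ \ge\ t^n\int_\Om (dd^cu_{K_t}^*)^n\ =\ t^n\,\ca_n(K_t,\Om),\qquad K_t=\{u\le -t\},
\end{equation*}
which converts the isocapacitary estimate $\mu(K_t)\le C\exp\big(-\lam/\ca_n(K_t,\Om)^{\frac{q}{n+1}}\big)$ into the $\cM[u]$-normalized tail bound $\mu(K_t)\le C\exp\big(-\lam\, t^{\frac{nq}{n+1}}/\cM[u]^{\frac{q}{n+1}}\big)$. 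Once this is in hand, inserting it into the layer-cake expansion of $\exp\big((\lam-\eps)(-u)^{\frac{nq}{n+1}}/\cM[u]^{\frac{q}{n+1}}\big)$ gives a geometric series summing to $\lam/\eps -1$ directly; the strong capacitary inequality \eqref{s-ineq} is not needed here, contrary to what you suggest. So the substitution of $\cM[u]$ for $\cE_n(u)^{n/(n+1)}$ is not delicate bookkeeping — it is one explicit comparison-principle inequality, after which the series converges without further work.
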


\begin{proof}
First, we prove the if part. By Theorem \ref{main3}(iv), there is $C>0$ such that for any compact subset $K\subset \Om$ $$\mu(K)e^{\lam\frac{1}{\ca_n(K,\Om)^{\frac{q}{n+1}}}}\le C.$$
For $u\in\F_k(\Om)$, we denote $K_t:=\{u\le -t\}$, $t>0$. By comparison principle, 
we have 
$$\cM[u]=\int_{\Om}(dd^cu)^n\ge t^n\int_{\Om}(dd^cu_{K_t})^n=t^n\ca_n(K_t,\Om), \ \forall t>0.$$  
Therefore, 
$$\mu(K_t)\le Ce^{-\lam\frac{1}{\ca_n(K,\Om)^{\frac{q}{n+1}}}}\le Ce^{-\lam\frac{t^{\frac{nq}{n+1}}}{\cM[u]^{\frac{q}{n+1}}}}.$$
Then for every  $0<\eps<\lam$, 
\begin{eqnarray*}
\int_{\Om}e^{(\lam-\eps)\frac{(-u)^{\frac{nq}{n+1}}}{\cM[u]^{\frac{q}{n+1}}}}\,d\mu
&=& \sum_{j=0}^{\infty}\frac{(\lam-\eps)^j}{j!}\int_{\Om}\frac{(-u)^{\frac{nqj}{n+1}}}{\cM[u]^{\frac{qj}{n+1}}}\,d\mu  \\
&= & \sum_{j=0}^{\infty}\frac{(\lam-\eps)^j}{j!}\int_0^{\infty}\frac{\mu(K_t)}{\cM[u]^{\frac{qj}{n+1}}}\,d\left(t^{\frac{nqj}{n+1}}\right)  \\
&\le & C\sum_0^{\infty}\frac{(\lam-\eps)^j}{j!}\int_0^{\infty}e^{-\lam\frac{t^{\frac{nq}{n+1}}}{\cM[u]^{\frac{q}{n+1}}}}d\left(\frac{t^{\frac{nqj}{n+1}}}{\cM[u]^{\frac{qj}{n+1}}}\right)  \\
&\le & \sum_{j=0}^{\infty}\frac{(\lam-\eps)^j}{\lam^j}=\frac{\lam}{\eps}-1.
\end{eqnarray*}

Next, we show the only if part. For a Borel subset $E\subset \Om$, we can apply the Brezis-Merle trace inequality to $u_E^*$ to obtain the condition \eqref{cond-MT1}, which implies the Moser-Trudinger type inequality.
\end{proof}

\vskip 30pt

\section{The Dirichlet problem}

\subsection{The continuous solution(Proof of Theorem \ref{main5})}
First, we prove (ii) $\Rightarrow$ (i).
Suppose $E\subset \Om$ is a Borel subset, and $u$ solves
\begin{align}\label{diri1}
\begin{cases}
(dd^cu)^k\wedge\omega^{n-k}=\chi_E\,d\mu,\ \ \ &\text{in }\Omega,  \\
u=0,  &\text{on }\p \Om,
\end{cases}
\end{align}
where $\mu$ is a positive Radon measure.
\begin{theo}\label{GPT-infty}
Assume there exists $p>k+1$ such that  
$I_{k,p}(\Om,\mu)<+\infty$.
Let $u$ be a solution to \eqref{diri1}.
Then there exists $C_1>0$ depending on $k$, $p$ and $I_{k,p}(\Om,\mu)$ such that
	\beq\label{inftyest}
	\|u\|_{L^{\infty}(\Omega,\mu)}\le C_1\mu(E)^{\delta}
	\eeq
	where $\delta=\frac{p-k-1}{kp}$.
\end{theo}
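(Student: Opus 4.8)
The plan is to derive the $L^\infty$ bound from the Sobolev-type trace inequality for $k$-plurisubharmonic functions, using a Moser–type iteration on the level sets of $u$. Since $I_{k,p}(\Om,\mu)<+\infty$ with $p>k+1$, part (ii) of Theorem \ref{main3} (the case $p\ge k+1$) gives a constant $C_0$, depending only on $k$, $p$ and $I_{k,p}(\Om,\mu)$, with
\[
\int_\Om(-v)^p\,d\mu\le C_0\,\|v\|_{\pshko(\Om)}^{p}\qquad\text{for all }v\in\F_k(\Om);
\]
equivalently, by Remark \ref{rem}(a), $\mu(K)\le C_0\,\ca_k(K,\Om)^{p/(k+1)}$ for every compact $K\Subset\Om$. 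The first step is to observe that $u\le 0$ in $\Om$, so it suffices to bound $\sup_\Om(-u)$, and to set up the level sets $K_t^u=\{u\le-t\}$ and the function $\varphi(t):=\mu(K_t^u)$, which is non-increasing with $\varphi(0)\le\mu(E)<\infty$.

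The core estimate is a differential/recursive inequality for $\varphi$. For $t>0$ the truncation $(u+t)^+$ (or rather the shifted function on $\Om_t=\{u<-t\}$) lies in $\F_k(\Om_t)$, and comparison with the relatively extremal function of $K_s^u$ inside $\Om_t$ together with the equation $(dd^cu)^k\wedge\omega^{n-k}=\chi_E\,d\mu$ gives, for $s>t$,
\[
(s-t)^{k+1}\,\ca_k(K_s^u,\Om)\ \le\ \int_{\Om_t}(-(u+t))(dd^c u)^k\wedge\omega^{n-k}\ \le\ \int_{K_t^u}(-(u+t))\,\chi_E\,d\mu\ \le\ s\,\mu(K_t^u).
\]
Actually I would run this more carefully: writing the energy of the truncation over $\Om_t$ against $(dd^cu)^k$ and using the capacitary characterization, one obtains the Chebyshev-type bound
\[
\ca_k(K_s^u,\Om)\ \le\ \frac{1}{(s-t)^{k+1}}\int_{\Om_t}(-u-t)(dd^cu)^k\wedge\omega^{n-k}\ \le\ \frac{s\,\varphi(t)}{(s-t)^{k+1}}.
\]
Feeding this into the isocapacitary inequality $\mu(K_s^u)\le C_0\,\ca_k(K_s^u,\Om)^{p/(k+1)}$ yields
\[
\varphi(s)\ \le\ C_0\left(\frac{s}{(s-t)^{k+1}}\right)^{\frac{p}{k+1}}\varphi(t)^{\frac{p}{k+1}}\qquad(0<t<s).
\]
Since $p/(k+1)>1$, this is a super-linear recursion in $\varphi$, which is exactly the situation in which the classical De Giorgi/Kolodziej lemma (a "fast geometric convergence" lemma for a non-increasing function satisfying $\psi(s)\le \frac{C}{(s-t)^{\alpha}}\psi(t)^{1+\varepsilon}$) forces $\varphi$ to vanish past a finite level $t_\infty$, with
\[
t_\infty\ \le\ C_1\,\varphi(0)^{\frac{p-k-1}{kp}}\ =\ C_1\,\mu(E)^{\delta},\qquad \delta=\frac{p-k-1}{kp}.
\]
Choosing the dyadic levels $t_j$ and tracking the constant through the iteration produces $C_1$ depending only on $k$, $p$, $C_0$ (hence on $I_{k,p}(\Om,\mu)$), and $\varphi(t_\infty)=0$ means $\mu(\{-u>t_\infty\})=0$, i.e. $\|u\|_{L^\infty(\Om,\mu)}\le t_\infty\le C_1\mu(E)^\delta$, which is \eqref{inftyest}.

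The main obstacle is the rigorous justification of the truncation/integration-by-parts step that produces $\int_{\Om_t}(-u-t)(dd^cu)^k\wedge\omega^{n-k}\le s\,\varphi(t)$: one must check that the shifted truncation belongs to the right class on the sublevel domain $\Om_t$ (which is only pseudoconvex, not smooth, and may be disconnected), that the comparison principle Lemma \ref{compar} applies there, and that the boundary terms on $\p\Om_t$ vanish — for this I would approximate $u$ by smooth functions in $\F_k$ and $\Om_t$ by smooth subdomains, use that the relatively extremal function of $K_s^u$ in $\Om_t$ dominates the truncation with matching zero boundary values, and pass to the limit using weak continuity of Hessian measures along monotone sequences. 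A secondary technical point is verifying the hypotheses of the geometric iteration lemma (monotonicity, finiteness, the exponent $p/(k+1)>1$) and bookkeeping the constant so that it depends on $\mu$ and $\Om$ only through $I_{k,p}(\Om,\mu)$; this is routine once the recursion above is in hand.
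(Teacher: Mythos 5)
Your overall strategy — a De~Giorgi iteration on $\varphi(t):=\mu(K_t^u)$ driven by the isocapacitary form $\mu(K)\le C_0\,\ca_k(K,\Om)^{p/(k+1)}$ of Theorem~\ref{main3} — is sound, and it is essentially the paper's route (the paper invokes the Sobolev trace inequality plus H\"older where you invoke the equivalent isocapacitary inequality plus a weak-type capacitary bound). Your exponent bookkeeping is also consistent: tracking the $s$-factor through the dyadic levels, the recursion you display does produce $\delta=(p-k-1)/(kp)$. However, the key inequality you assert,
\[
\int_{\Om_t}\bigl(-(u+t)\bigr)\,(dd^cu)^k\wedge\omega^{n-k}\ \le\ s\,\mu(K_t^u),
\]
is false as written, and this is not the technical approximation/boundary-term issue you flag in your last paragraph — it is a circularity. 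On $\Om_t$ the factor $-(u+t)$ is \emph{not} bounded by $s$; that would require $u\ge-(s+t)$ a.e.\ with respect to $(dd^cu)^k\wedge\omega^{n-k}=\chi_E\,d\mu$, which is precisely the conclusion \eqref{inftyest} you are trying to establish. Since $(dd^cu)^k\wedge\omega^{n-k}$ is absolutely continuous with respect to $\mu$, a pointwise upper bound on $-u$ on the support of the Hessian measure is exactly the content of the theorem.

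Two standard repairs close this gap. \textbf{(a) Truncate before computing the energy.} Set $\tilde u:=\max\{u+t,\,-(s-t)\}$ on $\Om_t$. Then $\tilde u/(s-t)$ is admissible for $\ca_k(K_s^u,\Om_t)$, Lemma~\ref{compar} (applied to $u+t\le\tilde u$, equal near $\p\Om_t$) gives $\int_{\Om_t}(dd^c\tilde u)^k\wedge\omega^{n-k}\le\int_{\Om_t}(dd^cu)^k\wedge\omega^{n-k}=\hat\mu(\Om_t)\le\varphi(t)$, and $0\le-\tilde u\le s-t$ yields
\[
(s-t)^{k+1}\,\ca_k(K_s^u,\Om)\ \le\ \int_{\Om_t}(-\tilde u)\,(dd^c\tilde u)^k\wedge\omega^{n-k}\ \le\ (s-t)\,\varphi(t),
\]
so $\ca_k(K_s^u,\Om)\le\varphi(t)/(s-t)^{k}$; raising to the power $p/(k+1)$ gives a valid recursion $\varphi(s)\le C_0\varphi(t)^{p/(k+1)}/(s-t)^{pk/(k+1)}$ with the same $\delta$. \textbf{(b) Keep $u+t$ untruncated and replace Chebyshev by H\"older.} This is the paper's route: from $\|u+s\|_{\pshko(K_s)}^{k+1}=\int_{K_s}(-u-s)\,d\hat\mu$ and the trace inequality on $K_s$ one deduces $\|u+s\|_{\pshko(K_s)}^{k}\le C\hat\mu(K_s)^{1-1/p}$, and then the equation gives $t\,\hat\mu(K_{s+t})\le C\hat\mu(K_s)^{1+\delta}$, after which the iteration runs. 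Without one of these two steps, the recursion you feed into the iteration lemma is not justified.
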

\begin{proof}
The proof is similar to \cite{WWZ2}.
	We denote $\hat \mu:=\chi_{E}\cdot\mu$. For any $s>0$, let
$K_s:=\{u\le -s\}$ and $u_s=u+s$.  Denote $p=\frac{k+1}{1-k\delta}$. By $I_{k,p}(\Om,\mu)<+\infty$, 
we can apply Theorem \ref{main3} to get 
$$\left(\int_{K_s}\left(\frac{-u _s}{\|u _s\|_{\pshko(K_s)}}\right)^pd\hat\mu \right)^{\frac{1}{p}}\leq C.$$
Then by the equation, 
\begin{eqnarray*}
t\int_{K_{s+t}}(dd^cu _s)^k\wedge\omega^{n-k} & \le  & \int_{K_s}(-u _s)\,d\hat\mu  \\
&\le& \left(\int_{K_s}(-u _s)^pd\hat\mu \right)^{\frac{1}{p}}\left(\int_{K_s}d\hat\mu \right)^{1-\frac{1}{p}}   \\
&=&\left(\int_{K_s}\left(\frac{-u _s}{\|u _s\|_{\pshko(K_s)}}\right)^pd\hat\mu \right)^{\frac{1}{p}}\left(\int_{K_s}d\hat\mu \right)^{1-\frac{1}{p}}\cdot \|u _s\|_{\pshko(K_s)},
\end{eqnarray*} 
which implies 
\begin{align}\label{ite}
t\hat\mu (K_{s+t})\le C\hat\mu (K_s)^{(1-\frac{1}{p})\frac{k+1}{k}}=C\hat\mu (K_s)^{1+\delta}.
\end{align}
In particular,
$$\hat\mu (K_s)\le \frac{C}{s}\hat\mu (\Om)^{1+\delta}$$
for some $C>1$.
Then choose $s_0=2^{1+\frac{1}{\delta }}C^{1+\frac{1}{\delta }}\hat\mu (\Om)^{\delta }$, we get $\hat\mu (K_{s_0})\leq \frac{1}{2}\hat\mu (\Om)$. For any $l\in \mathbb{Z}_+$, define 
\begin{align}\label{iteration}
s_l=s_0+\sum_{j=1}^l2^{-\delta  j}\hat\mu (\Om)^{\delta },\ \ u^l=u+s^l, \ \ K_l=K_{s_l}.
\end{align}
Then
$$2^{-\delta  (l+1)}\hat\mu (\Om)^{\delta }\hat\mu (K_{l+1})=(s_{l+1}-s_l)\hat\mu (K_{l+1})\le C\hat\mu (K_l)^{1+\delta}.$$
% Then $u^l$ solves 
%\begin{align}\label{l-dirichlet}
%\begin{cases}
%(dd^cu)^n=d\hat\mu \ \ \ & \text{in\ $\Omega_l$,}  \\
%u=0                 \ \ \ & \text{on\ $\p \Omega_l$.}
%\end{cases}
%\end{align}
%Recall the Brezis-Merle inequality
%$$\int_{\Om_{s_l}}(-u^l)(dd^cu)^n\le C$$
%Applying it to $u^l$, we have 
%$$\|u^l\|_{L^1(\Omega_l)}\leq C|\Omega_l|^{1+\delta}.$$
%Note that the constants in the Sobolev inequalities depend on the upper bound of diameters of the domains. Hence the constants are uniform for $l$. 

We claim that $|K_{l+1}|\leq \frac{1}{2}|K_l|$ for any $l$. By induction, we assume the inequality holds for $l\leq m$. 
Then
\begin{align*}
\hat\mu (K_{m+1})\leq &  C\hat\mu (K_m)^{1+\delta }\frac{2^{\delta  (m+1)}}{\hat\mu (\Om)^{\delta }}    \\
\leq &  C\left[\left(\frac{\hat\mu (K_0)}{2^m}\right)^{\delta }\frac{2^{\delta  (m+1)}}{\hat\mu (\Om)^{\delta }}\right]\cdot |K_m|  \\
\leq & \left[C^{1+\delta }\frac{2^{\delta }}{s_0^{\delta }}\hat\mu (\Om)^{\delta ^2}\right]\cdot \hat\mu (K_m)   
\leq \frac{1}{2}\hat\mu (K_m). 
\end{align*}
This implies that the set 
$$\hat\mu \left(\left\{x\in \Omega\big|u<-s_0-\sum_{j=1}^{\infty}\left(\frac{1}{2^{\delta}}\right)^j\hat\mu (\Omega)^{\delta }\right\}\right)=0.$$
Hence, 
\beqs
\|u\|_{L^{\infty}(\Omega, \mu)}&\leq& s_0+\sum_{j=1}^{\infty}\left(\frac{1}{2^{\delta }}\right)^j\hat\mu (\Omega)^{\delta }\\
&=&2^{1+\frac{1}{\delta }}C^{1+\frac{1}{\delta }}\hat\mu (\Omega)^{\delta }+\frac{1}{2^{\delta }-1}\hat\mu (\Omega)^{\delta }\\
&\leq& C\hat\mu (\Omega)^{\delta }=C\mu(E)^{\delta}.
\eeqs
\vskip -30pt
\end{proof}

Next, by similar arguments as in \cite{WWZ2}, we can get a stability result as well as the existence of the unique continuous solution $u_E$ for \eqref{k-Dir}. We need the stability lemma.
\begin{lem}\label{stab-holder-mu}
	Let $u$, $v$ be  bounded $k$-plurisubharmonic functions in $\Omega$ satisfying $u\geq v$ on $\p \Omega$. 
	Assume $(dd^cu)^k\wedge\omega^{n-k}=d\mu$ and $\mu$ satisfies the condition in Theorem \ref{main5} (ii).
	Then   $\forall\ \eps>0$, there exists $C>0$ depending on $k$, $p$ and $I_{k,p}(\Om,\mu)$, 
	\beq\label{vu2-mu}
	\sup\limits_{\Omega}(v-u)\leq \eps+C\mu(\{v-u>\eps\})^{\delta}.
	\eeq
\end{lem}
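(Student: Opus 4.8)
The plan is to mimic the classical Ko\l{}odziej–type stability argument, but in its "capacity-free" form where the measure $\mu$ plays the role of capacity directly, using the $L^\infty$-estimate of Theorem \ref{GPT-infty} as the replacement for Ko\l{}odziej's comparison with capacity. First I would set $w := v - u$ and, for $t > 0$, let $U(t) := \{w > t\}$; since $u \ge v$ on $\p\Om$ and both functions are bounded, each $U(t)$ is relatively compact in $\Om$, and $w$ is bounded above, so it suffices to control $\sup_\Om w$ in terms of $\mu(U(\eps))$ for fixed $\eps > 0$. Fix $\eps > 0$ and for $t \ge \eps$ consider the auxiliary function $\phi_t := (1-\eta) u + \eta v$ type comparison; more cleanly, for small $\eta \in (0,1)$ set $\psi := (1-\eta)\big( \max\{u, v - t\} \big) + \text{const}$ adjusted so that on $\{w > t\}$ one has $\psi = (1-\eta)(v - t) + c$. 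The point of the construction is that $\psi$ and $u$ agree near $\p\Om$, $\psi \ge u$ there, $\psi \le u$ inside a sublevel set, and the Monge–Ampère mass of $\psi$ concentrated on $\{w > t\}$ is comparable to $(1-\eta)^k \mu(\{w > t\})$; running the comparison principle (Lemma \ref{compar}) against $u$, whose mass is $d\mu$, forces a bound of the shape $\mu(\{w > (1+\kappa) t - \kappa\eps \text{ or similar}\}) \le C\,\mu(\{w > t\})^{1+\delta}$, an iteration inequality of exactly the same form as \eqref{ite} in Theorem \ref{GPT-infty}.

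With that iteration inequality in hand, the second step is identical to the bootstrap already performed in the proof of Theorem \ref{GPT-infty}: one checks that $\mu(\{w > \eps\})$ small enough (which can always be arranged by shrinking, or absorbed into the constant) feeds a geometric decay $\mu(U(t_{l+1})) \le \tfrac12 \mu(U(t_l))$ along an increasing sequence $t_l \uparrow t_\infty$ with $t_\infty - \eps \le C\,\mu(\{w > \eps\})^{\delta}$, from which $\mu(\{w > t_\infty\}) = 0$ and hence $\sup_\Om w \le t_\infty \le \eps + C\,\mu(\{w > \eps\})^{\delta}$. The constant $C$ depends only on $k$, $p$ and $I_{k,p}(\Om,\mu)$ through the Sobolev trace inequality of Theorem \ref{main3}(i) used in the derivation of the iteration inequality, exactly as in Theorem \ref{GPT-infty}.

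The main obstacle is the first step: constructing the comparison function $\psi$ so that simultaneously (a) $\psi$ is $k$-plurisubharmonic and lies in $\F_k(\Om)$ so that Lemma \ref{compar} applies, (b) $\psi \ge u$ on $\p\Om$ (one must use the hypothesis $u \ge v$ on $\p\Om$ here, together with $\eta > 0$ to create strict room), (c) on the relevant sublevel set $\psi \le u$ so the comparison principle yields an inequality of Monge–Ampère masses in the correct direction, and (d) the mass $(dd^c\psi)^k \wedge \omega^{n-k}$ on $\{w > t\}$ is bounded below by a constant multiple of the $\mu$-mass there — this last point is where the $\eta$-perturbation and the bound $(dd^c(v-t))^k\wedge\omega^{n-k} \le$ (mass of $v$) enter, and it is the delicate calibration of the scaling parameters $\eta$ and $t$ that produces the exponent $1+\delta$. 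Once the iteration inequality is obtained the rest is the routine bootstrap copied from Theorem \ref{GPT-infty}, so I would present step one in full detail and merely refer to the proof of Theorem \ref{GPT-infty} for step two.
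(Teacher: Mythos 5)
The idea of reducing to an iteration inequality of the form $\mu(\{v-u>t'\})\le C\,\mu(\{v-u>t\})^{1+\delta}$ is sound in spirit, but the construction you sketch for step one has a genuine gap and, as a result, the whole approach would not go through as written. On the set $\{v-u>t\}$ your comparison function $\psi$ equals $(1-\eta)(v-t)+c$, so its $k$-Hessian measure there is $(1-\eta)^k\,(dd^cv)^k\wedge\omega^{n-k}$. For step (d) you need this to be bounded below by a constant multiple of $d\mu=(dd^cu)^k\wedge\omega^{n-k}$ on $\{v-u>t\}$, but the lemma makes no assumption whatsoever on the Hessian measure of $v$ --- $v$ is just an arbitrary bounded $k$-psh function. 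There is no reason $(dd^cv)^k\wedge\omega^{n-k}$ dominates any positive multiple of $d\mu$; it could even vanish identically on $\{v-u>t\}$. So the claimed lower bound on the mass of $\psi$, and hence the iteration inequality, cannot be extracted this way.

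The paper avoids this entirely by invoking Theorem \ref{GPT-infty} rather than re-running its iteration. Set $u_\eps:=u+\eps$ and $\Omega_\eps:=\{v-u_\eps>0\}$; this is relatively compact since $u\ge v$ on $\p\Omega$. Let $u_0\in\pshko(\Omega)$ solve $(dd^cu_0)^k\wedge\omega^{n-k}=\chi_{\Omega_\eps}\,d\mu$ with zero boundary data, so that Theorem \ref{GPT-infty} directly gives $\|u_0\|_{L^\infty}\le C\,\mu(\Omega_\eps)^\delta$. Now compare $u_0+v$ with $u_\eps$ on $\Omega_\eps$: both are $k$-psh, and the Hessian measure of $u_0+v$ is $\ge (dd^cu_0)^k\wedge\omega^{n-k}=d\mu=(dd^cu_\eps)^k\wedge\omega^{n-k}$ on $\Omega_\eps$ (the extra contribution from $v$ only helps --- no lower bound on $(dd^cv)^k$ is needed), while $u_0+v\le v=u_\eps$ on $\p\Omega_\eps$. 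The comparison principle yields $u_0\le u_\eps-v\le 0$ on $\Omega_\eps$, whence $\sup_{\Omega_\eps}(v-u_\eps)\le\sup|u_0|\le C\,\mu(\Omega_\eps)^\delta$, and \eqref{vu2-mu} follows. Note that in this argument the Hessian measure of $v$ appears only on the favorable side of the inequality, which is precisely what your construction fails to arrange.Your overall strategy —reduce to an iteration inequality $\mu(\{v-u>t'\})\le C\,\mu(\{v-u>t\})^{1+\delta}$ and bootstrap as in Theorem~\ref{GPT-infty}— is plausible in spirit, but the construction you sketch for step one has a genuine gap. On the set $\{v-u>t\}$ your comparison function $\psi$ equals $(1-\eta)(v-t)+c$, so its $k$-Hessian measure there is $(1-\eta)^k(dd^cv)^k\wedge\omega^{n-k}$. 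For your step~(d) you need this to dominate a positive multiple of $d\mu=(dd^cu)^k\wedge\omega^{n-k}$ on $\{v-u>t\}$, but the lemma makes no assumption at all on the Hessian measure of $v$ --- it is just an arbitrary bounded $k$-plurisubharmonic function, and $(dd^cv)^k\wedge\omega^{n-k}$ could vanish identically on $\{v-u>t\}$. So the claimed lower bound on the mass of $\psi$, and with it the iteration inequality, cannot be extracted from this construction.

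The paper avoids this entirely by invoking Theorem~\ref{GPT-infty} rather than re-deriving its iteration. Set $u_\eps:=u+\eps$ and $\Omega_\eps:=\{v-u_\eps>0\}$, which is relatively compact since $u\ge v$ on $\p\Omega$. Let $u_0\in\pshko(\Omega)$ solve $(dd^cu_0)^k\wedge\omega^{n-k}=\chi_{\Omega_\eps}\,d\mu$ with zero boundary data; Theorem~\ref{GPT-infty} immediately gives $\|u_0\|_{L^\infty}\le C\,\mu(\Omega_\eps)^\delta$. Now compare $u_0+v$ against $u_\eps$ on $\Omega_\eps$: both are $k$-plurisubharmonic, the Hessian measure of $u_0+v$ is $\ge (dd^cu_0)^k\wedge\omega^{n-k}=d\mu=(dd^cu_\eps)^k\wedge\omega^{n-k}$ on $\Omega_\eps$ by superadditivity (the contribution from $v$ can only help, and no lower bound on $(dd^cv)^k\wedge\omega^{n-k}$ is ever needed), and $u_0+v\le v=u_\eps$ on $\p\Omega_\eps$. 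The comparison principle then yields $u_0\le u_\eps-v\le 0$ on $\Omega_\eps$, so $\sup_{\Omega_\eps}(v-u_\eps)\le\sup_{\Omega_\eps}|u_0|\le C\,\mu(\Omega_\eps)^\delta$, which is \eqref{vu2-mu}. The key structural difference from your plan is that here the Hessian measure of $v$ appears only on the favorable side of the comparison, which is exactly what your $\psi$ fails to arrange.
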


\begin{proof}
We may suppose $d\mu\in L^\infty(\Omega)$ for the estimate since we will consider the approximation of $\mu$ later in the proof of existence and
continuity.
	Denote $u_{\eps}:=u+\eps$ and $\Omega_{\eps}:=\{v-u_{\eps}>0\}$.
	It suffices to estimate $\sup_{\Omega_{\eps}}|u_{\eps}-v|$.
	
	Note that $\Omega_{\eps}\Subset\Omega$ and $u_{\eps}$ solves 
	$$
	\begin{cases}
		 (dd^cu_{\eps})^k\wedge\omega^{n-k}=d\mu \  \  &\text{\ in $\Omega_{\eps}$,} \\[-3pt]
		u_{\eps}=v           & \text{\ on $\pom_{\eps}$.}
		\end{cases}
	$$
	Let
	$u_0$ be the solution to the Dirichlet problem
	$$
	\begin{cases}
		(dd^cu_{0})^k\wedge\omega^{n-k}=\chi_{\Om_{\eps}}d\mu \ \ &\text{\ in $\Omega$,} \\[-5pt]
		 \ u_{0}=0           \hskip55pt      \ \ &\text{\ on $\pom$.}
		\end{cases}
	$$
	By the comparison principle we have 
	$$u_0\leq u_{\eps}-v\leq 0 \ \ \text{in\ $\Omega_{\eps}$}.$$ 
	Hence we obtain
	$$\sup_{\Omega_{\eps}}|u_{\eps}-v|
	\leq \sup_{\Omega_{\eps}}|u_0|\leq C\mu(\Om_\eps)^{\delta}.$$
\end{proof}

\begin{prop}\label{prep-holder-mu}
	Let $u$, $v$ be bounded $k$-plurisubharmonic functions in $\Omega$ 
	satisfying $u\geq v$ on $\p \Omega$. 
	Assume that $(dd^cu)^k\wedge\omega^{n-k}=d\mu$ and $\mu$ satisfies  the condition in Theorem \ref{main5} (ii).
	Then for $r\geq 1$ and $0\leq \gamma'<\frac{\delta r}{1+\delta r}$, it holds 
	\beq\label{vu-mu}
	\sup\limits_{\Omega}(v-u)\leq C\|\max(v-u,0)\|_{L^r(\Omega,d\mu)}^{\gamma}
	\eeq
	for a uniform constant $C=C(\gamma',\|v\|_{L^{\infty}(\Omega,d\mu)})>0$. 
\end{prop}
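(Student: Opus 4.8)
The plan is to interpolate between the $L^\infty$–$L^\infty$ stability estimate of Lemma \ref{stab-holder-mu} and a crude $L^\infty$-bound on $v-u$, using only the distribution function $\mu(\{v-u>\eps\})$ together with the Chebyshev inequality. First I would set $M:=\sup_\Omega(v-u)$; we may assume $M>0$, otherwise there is nothing to prove. The starting point is the conclusion of Lemma \ref{stab-holder-mu}: for every $\eps>0$,
\begin{align*}
M\le \eps + C\,\mu(\{v-u>\eps\})^{\delta}.
\end{align*}
Next I would feed in the Chebyshev inequality with respect to the measure $\mu$: writing $N:=\|\max(v-u,0)\|_{L^r(\Omega,d\mu)}$, one has $\mu(\{v-u>\eps\})\le \eps^{-r}N^r$, so that
\begin{align*}
M\le \eps + C\,\eps^{-r\delta}N^{r\delta}.
\end{align*}

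The core of the argument is then a one-variable optimization over $\eps>0$. I would choose $\eps$ of the form $\eps = N^{\alpha}$ for an exponent $\alpha$ to be determined (this is legitimate provided $N$ is small; if $N\ge 1$ the estimate is trivial since $M\le \|v\|_{L^\infty}+\|u\|_{L^\infty}\le C$ is bounded a priori, and one absorbs this into the constant because $\gamma'\le 1$). Balancing the two terms forces $\alpha = r\delta\,\alpha - r\delta\cdot(\text{something})$; concretely the second term is $C N^{r\delta(1-\alpha)}$, equal to the first when $\alpha = r\delta(1-\alpha)$, i.e. $\alpha = \frac{r\delta}{1+r\delta}$. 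With this choice $M\le (1+C)N^{\frac{r\delta}{1+r\delta}}$. Since the target exponent satisfies $0\le\gamma'<\frac{r\delta}{1+r\delta}$, and $N$ can be assumed $\le 1$, we have $N^{\frac{r\delta}{1+r\delta}}\le N^{\gamma'}$, which yields \eqref{vu-mu} with $\gamma=\gamma'$ (reconciling the notation: the statement writes the exponent as $\gamma$ but constrains $\gamma'$, so one simply takes any admissible $\gamma'$ and relabels). The constant depends on $\gamma'$ through the threshold at which one switches to the trivial bound, and on $\|v\|_{L^\infty(\Omega,d\mu)}$ (hence on $\|u\|_{L^\infty(\Omega,d\mu)}$ via the equation, using Theorem \ref{GPT-infty} applied with $E=\Omega$) to control $M$ in the regime $N\ge 1$.

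The main obstacle, such as it is, is bookkeeping rather than a genuine difficulty: one must make sure Lemma \ref{stab-holder-mu} is applicable (its hypotheses — $u,v$ bounded $k$-psh, $u\ge v$ on $\partial\Omega$, $(dd^cu)^k\wedge\omega^{n-k}=d\mu$ with $\mu$ as in Theorem \ref{main5}(ii) — are exactly those of the Proposition, so this is free), and one must handle the two regimes $N\le 1$ and $N>1$ uniformly, since the optimization $\eps=N^{\frac{r\delta}{1+r\delta}}$ is only advantageous for small $N$. A secondary point to check is that the exponent constraint $\gamma'<\frac{\delta r}{1+\delta r}$ is strict precisely because passing from $N^{\frac{r\delta}{1+r\delta}}$ to $N^{\gamma'}$ on $\{N\le 1\}$ needs $\gamma'\le \frac{r\delta}{1+r\delta}$, while the strictness gives room to absorb the additive constant from the $\{N>1\}$ regime (where $N^{\gamma'}\ge 1$) into $C$. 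No estimate beyond Lemma \ref{stab-holder-mu}, Chebyshev, and elementary calculus is needed.
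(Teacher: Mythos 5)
Your proof is correct and follows essentially the same route as the paper: Chebyshev's inequality to control $\mu(\{v-u>\eps\})$ by $\eps^{-r}\|\max(v-u,0)\|_{L^r(\Omega,d\mu)}^r$, plug into Lemma~\ref{stab-holder-mu}, and optimize $\eps=N^{\gamma'}$ (the paper chooses $\gamma'\le\frac{\delta r}{1+\delta r}$ outright while you solve for the balancing exponent, but these are the same choice). Your explicit handling of the regime $N\ge 1$ via the a priori $L^\infty$-bound is a step the paper leaves implicit, but it is not a different method.
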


\begin{proof} Note that for any $\eps>0$,
	$$\mu(\{v-u>\eps\})
	\leq  \eps^{-r}\int_{\{v-u>\eps\}}|v-u|^{r} \,d\mu  \leq \eps^{-r}\int_\Omega[\max(v-u,0)]^r\,d\mu.$$
	Let $\eps:=\|\max(v-u,0)\|_{L^r(\Omega,d\mu)}^{\gamma'}$, 
	where $\gamma'$ is to be determined. 
	By Lemma \ref{stab-holder-mu}, we have
	\begin{eqnarray}
	\sup\limits_{\Omega}(v-u)
	&\leq & \eps+C\mu(\{v-u>\eps\})^{\delta} \label{vu3-mu} \\
	&\leq & \|\max(v-u,0)\|_{L^r(\Omega,d\mu)}^{\gamma'}+C\|\max(v-u,0)\|_{L^r(\Omega,d\mu)}^{\delta(r-\gamma' r)}.  \nonumber
	\end{eqnarray}
	Choose $\gamma'\leq \frac{\delta r}{1+\delta r}$, where $0<\delta<\frac{1}{k}$,
	\eqref{vu-mu} follows from \eqref{vu3-mu}.
\end{proof}

Now we can show the existence of the unique continuous solution for \eqref{k-Dir} when $\mu$ satisfies 
 the condition in Theorem \ref{main5} (ii).  We consider
$\tilde\mu_{\eps}:=\rho_{\eps} *d\mu$ defined on $\Om_{\eps}:=\{x\in \Om\,|\,\text{dist}(x,\p \Om)\le \eps\}$,
where $\eps>0$ and  $\rho_{\eps}$ is the cut-off function such that 
$$\rho_{\eps}\in C^{\infty}_0(\R^n),\  \rho_{\eps}=1\ \ \text{in }B_{\eps}(O),\  \text{and }\rho_{\eps}=0\text{ on }\R^n\setminus \overline{B_{2\eps}(O)}.$$
Then for every $E\subset \Om$, we define
$$\mu_{\eps}(E):=\tilde{\mu}_{\eps}(E\cap \Om_{\eps}).$$
By the classic measure theory, $\mu_{\eps}$ is absolutely continuous with respect to Lebesgue measure $\omega^n$ 
 	with bounded density functions $f_{\eps}$. We are going to check $\mu_{\eps}$ satisfies  the condition in Theorem \ref{main5} (ii) uniformly. 
 	Denote $K_y:=\{x\in\Om:x-y\in K\cap \Om_{\eps}\}$. By definition, for every compact subset $K\subset \Om$, we have
\begin{align}\label{11}
\mu_{\eps}(K)=\int_{\C^n}\int_{K_y}\rho_{\eps}(x-y)\,d\mu(x)\,\omega^n(y)  
\le  C\sup_{|y|\le 2\eps}\mu(K_y)  
\le  C\sup_{|y|\le 2\eps}\left(\ca_{k}(K_y,\Om)\right)^{p}.
\end{align}
 Let $u_y(x):=u_{k,K_y}^*(x+y)$, $u_{\eps}(x):=u_{k,\Om_{\eps}}^*(x)$ be the relative $k$-extreme functions of $K_y$, $\Om_{\eps}$ with respect to $\Om$.
 For any $0<c<\frac{1}{2}$, denote $\Om_{c}=\{u_{\eps}\le -c\}$.
Let
$$
g(x):=\begin{cases}
\max\left\{u_y(x)-c, (1+2c)u_{\eps}\right\},\ \ \ &x\in \Om_{\frac{c}{2}};  \\
(1+2c)u_{\eps},   &x\in \Om\setminus\Om_{\frac{c}{2}}.
\end{cases}
$$
Note that $g$ is a well-defined $k$-plurissubharmonic function in $\Om$. Since $K_y\subset \Om_{\eps}$ for every $|y|\le \eps$, we have $u_y-c\ge (1+2c)u_{\eps}=-1-2c$ on $\overline{\Om_{\eps}}$. Hence for any compact subset $K\subset \Om$, we can get
\begin{eqnarray}\label{22}
\ca_k(K,\Om)&\ge& (1+2c)^{-k}\int_{K\bigcap \Om_{\eps}}(dd^cg)^k\wedge\omega^{n-k}=(1+2c)^{-k}\int_{K\bigcap \Om_{\eps}}(dd^cu_y)^k\wedge\omega^{n-k}  \nonumber\\
&=&(1+2c)^{-k}\int_{K_y}(dd^cu_{k,K_y}^*)^k\wedge\omega^{n-k}=(1+2c)^{-k}\ca_k(K_y,\Om).
\end{eqnarray}
By \eqref{11} and \eqref{22}, we have shown that $\mu_{\eps}$ satisfies  the condition in Theorem \ref{main5} (ii) uniformly.

 Hence the solutions $\{u_{\eps}\}$ to \eqref{k-Dir} with $\{f_\eps\}$ are uniformly bounded and continuous. Furthermore,
 there exists a sequence $\eps_j\to 0$ such that $\lim_{j\to \infty}\|\mu_{\eps_j}-\mu\|=0$, where $\|\cdot\|$ denotes 
 the total variation of a signed measure.
 Then the proof is finished by the following well-known result.

\begin{prop}
Let $\mu_j=(dd^c\varphi_j)^k\wedge\omega^{n-k}$, $\mu=(dd^c\varphi)^k\wedge\omega^{n-k}$ be 
non-pluripolar nonnegative Radon measures with finite mass, where $\varphi_j$, $\varphi\in\pshko(\Om)$ and 
$\|\varphi_j\|_{L^\infty(\Omega,d\mu)}$, $\|\varphi\|_{L^\infty(\Omega, d\mu)}\le C$. If $\|\mu_j-\mu\|\to 0$, then
$$\varphi_j\to\varphi\text{ in }L^1(\Om,d\mu).$$
\end{prop}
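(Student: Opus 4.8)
The plan is to argue through subsequences: it suffices to show that every subsequence of $\{\varphi_j\}$ has a further subsequence converging to $\varphi$ in $L^1(\Om,d\mu)$, so fix one such subsequence, still denoted $\{\varphi_j\}$. Each $\varphi_j$ is a negative $k$-subharmonic (hence subharmonic) function with vanishing boundary values, and the hypothesis $\|\varphi_j\|_{L^\infty(\Om,d\mu)}\le C$ forces $\varphi_j\ge -C$ $\mu$-a.e., i.e.\ on a set of positive $\mu$-mass; hence the $\varphi_j$ cannot tend locally uniformly to $-\infty$, and by Hartogs' lemma the family $\{\varphi_j\}$ is relatively compact in $L^1_{\mathrm{loc}}(\Om)$. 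Passing to a further subsequence, $\varphi_j\to\psi$ in $L^1_{\mathrm{loc}}(\Om)$ and $\omega^n$-a.e., where $\psi$ is negative $k$-subharmonic; the uniform bound $\|\mu_j\|\to\|\mu\|$ on the Hessian masses then gives $\psi\in\mathcal F_k(\Om)$.

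The main step is to identify $\psi$ with $\varphi$ by passing to the limit in the Hessian operator. I would set $\psi_j:=(\sup_{l\ge j}\varphi_l)^{*}\downarrow\psi$ and use Bedford--Taylor continuity along decreasing sequences, so $(dd^c\psi_j)^k\wedge\omega^{n-k}\to(dd^c\psi)^k\wedge\omega^{n-k}$ in the weak-$*$ sense. Since $\psi_j\ge\varphi_l$ for $l\ge j$, with matching zero boundary values, the comparison principle (Lemma \ref{compar}) gives $\int_\Om(dd^c\psi_j)^k\wedge\omega^{n-k}\le\|\mu_l\|$, hence $\int_\Om(dd^c\psi)^k\wedge\omega^{n-k}\le\|\mu\|$. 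The reverse inequality and, more importantly, the weak-$*$ convergence $(dd^c\varphi_j)^k\wedge\omega^{n-k}\to(dd^c\psi)^k\wedge\omega^{n-k}$ then come from a stability argument of Xing--Cegrell type, which turns $L^1_{\mathrm{loc}}$-convergence of $k$-subharmonic functions of uniformly bounded Hessian mass — the $\mu_j$ charging no $k$-pluripolar set being essential here — into convergence in $\ca_k$. Comparing with the hypothesis $\|\mu_j-\mu\|\to0$ yields $(dd^c\psi)^k\wedge\omega^{n-k}=\mu=(dd^c\varphi)^k\wedge\omega^{n-k}$, and since $\psi,\varphi\in\mathcal F_k(\Om)$ have the same vanishing boundary values, the comparison principle forces $\psi=\varphi$; in particular the chosen subsequence converges to $\varphi$ in capacity.

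It remains to upgrade convergence in capacity to convergence in $L^1(\Om,d\mu)$. First I would note that $\mu$ is dominated by capacity: since $\varphi\ge -C$ $\mu$-a.e., the set $\{\varphi\le -C-1\}$ is $\mu$-null, and the one-sided comparison $(dd^c\max(\varphi,-C-1))^k\wedge\omega^{n-k}\ge\mathbf 1_{\{\varphi>-C-1\}}(dd^c\varphi)^k\wedge\omega^{n-k}$, together with boundedness of $\max(\varphi,-C-1)$, gives $\mu(E)\le(C+1)^k\,\ca_k(E,\Om)$ for every Borel $E\subset\Om$. Hence $\mu(\{|\varphi_j-\varphi|>\eps\})\le(C+1)^k\,\ca_k(\{|\varphi_j-\varphi|>\eps\},\Om)\to0$ for each $\eps>0$, i.e.\ $\varphi_j\to\varphi$ in $\mu$-measure; combined with the $\mu$-a.e.\ bound $|\varphi_j-\varphi|\le2C$ (again from the $L^\infty(d\mu)$ hypotheses) and $\mu(\Om)<\infty$, dominated convergence gives $\int_\Om|\varphi_j-\varphi|\,d\mu\to0$, which proves the subsequence claim.

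I expect the genuine obstacle to be the identification step in the second paragraph, and specifically the passage $(dd^c\varphi_j)^k\wedge\omega^{n-k}\to(dd^c\psi)^k\wedge\omega^{n-k}$ in the weak-$*$ sense: for a general sequence of (possibly unbounded) $k$-subharmonic functions converging only in $L^1_{\mathrm{loc}}$ the Hessian measures need not converge, and one must exploit the monotonicity/energy structure of $\mathcal F_k(\Om)$ and the non-pluripolarity of the $\mu_j$ to force convergence in capacity. A secondary technicality is the reduction of $\mu$ to the Hessian measure of a bounded potential behind the estimate $\mu\le(C+1)^k\ca_k$; once these are in hand, the remaining steps are routine and close the proof.
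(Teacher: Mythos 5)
The decisive step in your outline is the claim, at the end of the second paragraph, that one can upgrade $L^1_{\rm loc}$-convergence $\varphi_j\to\psi$ to weak-$*$ convergence of the Hessian measures $(dd^c\varphi_j)^k\wedge\omega^{n-k}\to(dd^c\psi)^k\wedge\omega^{n-k}$ (and hence to convergence in $\ca_k$) by a ``stability argument of Xing--Cegrell type.'' This does not come off the shelf, and you yourself flag it as the crux. The stability theorems in that circle of ideas require either a uniform $L^\infty(\Om)$ bound on the potentials or a uniform domination of the measures $\mu_j$ by a fixed function of capacity, so that the solutions are uniformly bounded and a $\ca_k$-convergence argument can be run. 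Here you have neither: the hypothesis only gives $L^\infty(\Om,d\mu)$-bounds (so the $\varphi_j$ can be very negative off a $\mu$-null set), and nothing a priori controls the $\mu_j$ by capacity. So the sentence ``the chosen subsequence converges to $\varphi$ in capacity'' is exactly what remains unproved. The paper's proof never needs weak-$*$ convergence of the raw measures $(dd^c\varphi_j)^k\wedge\omega^{n-k}$; it works solely with the decreasing envelopes $\psi_j=(\sup_{l\ge j}\varphi_l)^*$, for which Bedford--Taylor continuity is available, and supplies the lower bound on $(dd^c\psi)^k\wedge\omega^{n-k}$ by a device you are missing: introduce the reference measure $\nu:=2^{-1}\mu+\sum_{j\ge 2}2^{-j}\mu_j$, so that all $\mu_j,\mu$ are absolutely continuous with respect to $\nu$, write $\mu_j=f_j\,\nu$, $\mu=f\,\nu$, and observe that $\|\mu_j-\mu\|\to 0$ is precisely $f_j\to f$ in $L^1(\nu)$. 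The Demailly-type maximum inequality applied to $\psi_j$ then gives $(dd^c\psi_j)^k\wedge\omega^{n-k}\ge(\inf_{l\ge j}f_l)\,d\nu$, which together with the comparison-principle upper bound squeezes $(dd^c\psi)^k\wedge\omega^{n-k}=f\,d\nu=d\mu$. That density comparison against a common reference measure is the missing idea that replaces your black-box stability theorem.

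Aside from this gap, your outline is sound, and in one place more careful than the paper's own sketch: the domination $\mu(E)\le(C+1)^k\,\ca_k(E,\Om)$, obtained by truncating $\varphi$ at level $-C-1$ and using locality of the Hessian operator, is correct and genuinely needed in order to convert $\omega^n$-a.e.\ convergence into convergence in $\mu$-measure and then (via the $L^\infty(\mu)$-bound and dominated convergence) into $L^1(\Om,d\mu)$-convergence. The paper asserts the $L^1(\Om,d\mu)$-convergence of a subsequence directly ``by weak compactness'' without this explanation. The remaining ingredients in your plan --- the subsequence reduction, the envelopes $\psi_j$, the mass bound $\int_\Om(dd^c\psi_j)^k\wedge\omega^{n-k}\le\|\mu_l\|$ via the comparison principle, and the uniqueness step $\psi=\varphi$ --- all match the intended argument.
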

\begin{proof}
When $k=n$, this is Proposition 12.17 in \cite{GZ}. The proof also applies to $k<n$. We write a sketch of its proof here.
Note that by $\|\mu_j-\mu\|\to 0$, the measure
$$\nu:=2^{-1}\mu+\sum_{j\ge 2}2^{-j}\mu_j$$
is a well-defined non-pluripolar nonnegative Radon measure.  $\mu$, $\mu_j$ are absolutely continuous with respect to $\nu$. We may suppose that $\mu_j=f_j\nu$, $\mu=f\nu$ and $f_j\to f$ in $L^1(\Om,\nu)$. Then by  the weak compactness, there exists a subsequence $\varphi_j$ and $\psi\in\pshko(\Om)$ such that $\varphi_j\to \psi$ in $L^1(\Om,d\mu)$. Denote 
$\psi_j=\left(\sup_{l\ge j}\varphi_l\right)^*$, which converges to $\psi$ decreasingly almost everywhere with respect to $\mu$.
Then by comparison principle, we have
$$(dd^c\psi_j)^k\wedge\omega^{n-k}\le (dd^c\varphi_j)^k\wedge\omega^{n-k}=d\mu_j,$$
which implies $(dd^c\psi)^k\wedge\omega^{n-k}\le d\mu$. To get the equality, we use the absolutely continuity to obtain
$$(dd^c\psi_j)^k\wedge\omega^{n-k}\ge \inf_{l\ge j}f_l\,d\nu,$$
thereby completing the proof.
\end{proof}

The proof of (i) $\Rightarrow$ (ii) is simple. Let $\varphi_E$ be the solution to \eqref{k-Dir}. Then $\hat \varphi:=\frac{\varphi_E}{C\mu(E)^{\delta}}\in\pshk(\Om)$, and $-1\le \hat \varphi\le 0$. By definition we have
$$C^{-k}\mu(E)^{1-k\delta}=\frac{1}{C^k\mu(E)^{k\delta}}\int_E(dd^c\varphi_E)^k\wedge\omega^{n-k}=\int_{E}(dd^c\hat \varphi)^k\wedge\omega^{n-k}\le \ca_k(E,\Om).$$
In view of \eqref{cond-sobolev1},
we have completed the proof.

\subsection{H\"older continuity(Proof of Theorem \ref{main6})}
In this section, we consider the H\"older continuity of the solution. As we mentioned in the introduction, we will give a pure PDE proof for Theorem \ref{main6} as in \cite{WWZ2}.
For the proof from (i) to (ii), we just follow Proposition 2.4 in \cite{DKN}, which is a PDE approach.  It suffices to consider the other direction. Suppose \eqref{cond-DKN} holds.
In order to obtain the H\"older estimate, we need an $L^{\infty}$-estimate like \eqref{inftyest} with the measure replaced by Lebesgue measure. 
We suppose the solution $u$ and the measure $d\mu$ in \eqref{cMA} are smooth, and the general result follows by approximation. 

First, we  show there is an upper bound on $\|u\|_*$ under condition \eqref{cond-DKN}.
Note that by the classical Sobolev inequality and Theorem \ref{quotient1}, we have
\begin{align}\label{*1}
\|u\|_*\le C\|u\|_{L^1(\Om,\omega^n)}+C\left(\int_{\Om}du\wedge d^cu\wedge\omega^{n-1}\right)^{\frac{1}{2}}\le C\cE_k(u)^{\frac{1}{k+1}}<+\infty.
\end{align}
By condition \eqref{cond-DKN}, we get
\begin{align}
\frac{\cE_k(u)}{\|u\|_*}=\int_{\Om}\left(-\frac{u}{\|u\|_*}\right)\,d\mu\le C\frac{\|u\|_{L^1(\Om,\omega^n)}^{\gamma}}{\|u\|_{*}^{\gamma}}.
\end{align}
This implies
\begin{align}\label{*2}
\cE_k(u)^{1-\frac{\gamma}{k+1}}\le C\|u\|_*^{1-\gamma}.
\end{align}
Then by \eqref{*1} we get $\|u\|_*\le c_0$.

Choose $f=\hat u:=\frac{u}{c_0}$ in condition \eqref{cond-DKN}. Then we get
$$\int_{\Om}(-\hu)\,d\mu\le C\left(\int_{\Om}(-\hu)\,\omega^n\right)^{\gamma}.$$
Then in the proof of the $L^{\infty}$-estimate(Theorem \ref{GPT-infty}), we use a different iteration. 
We denote $\Om_s:=\{\hu\le -s\}$, $p< \frac{n(k+1)}{n-k}$(when $k=n$, we can choose any $p>0$)  and $\hu_s:=\hu+s$.
Note that
\begin{align}\label{energy*}
\cE_{k,\Om_s}(\hu_s):= & \int_{\Om_s}(-\hu_s)\,d\mu \le C\left(\int_{\Om_s}(-\hu_s)\,\omega^n\right)^{\gamma}  \nonumber\\
\le & C\left(\int_{\Om_s}\left(\frac{-\hu_s}{\cE_{k,\Om_s}(\hu_s)^{\frac{1}{k+1}}}\right)^p\,\omega^n\right)^{\frac{\gamma}{p}}\cE_{k,\Om_s}(\hu_s)^{\frac{\gamma}{k+1}}|\Om_s|^{(1-\frac{1}{p})\gamma}.
\end{align}
By the Sobolev inequality for the complex Hessian equation with respective to the Lebesgue measure \cite{AC20}, we get
\begin{eqnarray*}
t|\Om_{s+t}|\le  \int_{\Om_s}(-\hu_s)\,\omega^n&\le& C\left(\int_{\Om_s}(-\hu_s)^p\,\omega^n\right)^{\frac{1}{p}}|\Om_s|^{1-\frac{1}{p}}\\
&\le& C\cE_{\Om_s}(\hu_s)^{\frac{1}{k+1}} |\Om_s|^{1-\frac{1}{p}}\le C|\Om_s|^{1+\frac{\gamma p-k-1}{(k+1-\gamma)p}}.
\end{eqnarray*}
Hence,
$$\|\hu\|_{L^{\infty}(\Om,\omega^n)}\le C|\Om|^{\frac{\gamma p-k-1}{(k+1-\gamma)p}}.$$
By \eqref{*1} and \eqref{energy*}, we have $\|u\|_*\le C|\Om|^{\frac{(p-1)\gamma}{(k+1-\gamma)p}}$. 
Therefore, for $1<p\le \frac{n(k+1)}{n-k}$ when $1\le k<n$ and $p>1$ when  $k=n$,
\beq\label{maes}
\|u\|_{L^{\infty}(\Om,\omega^n)}\le C|\Om|^{\frac{\gamma p-k-1}{(k+1-\gamma)p}}\|u\|_*\le C|\Om|^{\frac{2\gamma p-\gamma-k-1}{(k+1-\gamma)p}},
\eeq
where  
\beq\label{del}
\delta:=\frac{2\gamma p-\gamma-k-1}{(k+1-\gamma)p}.
\eeq

Then by the same arguments as in \cite{WWZ2}, we can obtain the H\"older continuity.
For readers' convenience and the self-containness of this paper, we include a sketch here.

First, since we have $L^{\infty}$-estimate \eqref{maes} now, we can get the similar stability results as in Lemma \ref{stab-holder-mu} and Proposition \ref{prep-holder-mu}, in which we change the measure $d\mu$ to be the standard Lebesgue measure $\omega^n$. 

\begin{prop}\label{prep-holder}
	Let $u$, $v$ be bounded $k$-plurisubharmonic functions in $\Omega$ 
	satisfying $u\geq v$ on $\p \Omega$. 
	Assume that $(dd^cu)^k\wedge\omega^{n-k}=d\mu$ and $d\mu$ satisfies the \eqref{cond-DKN}.
	Then for $r\geq 1$ and $0\leq \gamma'<\frac{\delta r}{1+\delta r}$, it holds 
	\beq\label{vu}
	\sup\limits_{\Omega}(v-u)\leq C\|\max(v-u,0)\|_{L^r(\Omega,\omega^n)}^{\gamma}
	\eeq
	for a uniform constant $C=C(\gamma',\|v\|_{L^{\infty}(\Om,\omega^n)})>0$. Here $\delta$ is given by \eqref{del}.
\end{prop}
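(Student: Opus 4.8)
The plan is to mirror the two-step structure used earlier for Lemma~\ref{stab-holder-mu} and Proposition~\ref{prep-holder-mu}, now with $d\mu$ replaced by $\omega^n$ in the $L^r$-norm on the right, and with the key input being the new $L^\infty$-estimate \eqref{maes} (and its local version $\|\hat u\|_{L^\infty(\Om,\omega^n)}\le C|\Om|^{\frac{\gamma p-k-1}{(k+1-\gamma)p}}$) together with the Sobolev inequality for the complex Hessian operator with respect to Lebesgue measure \cite{AC20}. First I would prove the analogue of Lemma~\ref{stab-holder-mu}: given $u,v$ bounded $k$-plurisubharmonic with $u\ge v$ on $\p\Om$ and $(dd^cu)^k\wedge\omega^{n-k}=d\mu$ satisfying \eqref{cond-DKN}, set $u_\eps:=u+\eps$, $\Om_\eps:=\{v-u_\eps>0\}\Subset\Om$, and let $u_0$ solve $(dd^cu_0)^k\wedge\omega^{n-k}=\chi_{\Om_\eps}\,d\mu$ with zero boundary data. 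The comparison principle gives $u_0\le u_\eps-v\le 0$ in $\Om_\eps$, so $\sup_{\Om_\eps}|u_\eps-v|\le\sup_\Om|u_0|$, and then the $L^\infty$-estimate \eqref{maes} applied on $\Om_\eps$ yields $\sup_\Om(v-u)\le\eps+C|\Om_\eps|^\delta$ with $\delta$ as in \eqref{del}.

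Next I would convert this into the $L^r$-form exactly as in Proposition~\ref{prep-holder-mu}: for any $\eps>0$, Chebyshev's inequality gives $|\{v-u>\eps\}|\le\eps^{-r}\int_\Om[\max(v-u,0)]^r\,\omega^n$, so choosing $\eps:=\|\max(v-u,0)\|_{L^r(\Om,\omega^n)}^{\gamma'}$ and plugging into the stability inequality from the previous step produces
$$\sup_\Om(v-u)\le\|\max(v-u,0)\|_{L^r(\Om,\omega^n)}^{\gamma'}+C\|\max(v-u,0)\|_{L^r(\Om,\omega^n)}^{\delta(r-\gamma'r)}.$$
Then, as long as $\gamma'\le\frac{\delta r}{1+\delta r}$ (equivalently $\gamma'\le\delta(r-\gamma'r)$), both exponents are $\ge\gamma'$ and, the norm being bounded by a constant depending on $\|v\|_{L^\infty(\Om,\omega^n)}$ via \eqref{maes}, the right-hand side is controlled by $C\|\max(v-u,0)\|_{L^r(\Om,\omega^n)}^{\gamma'}$; renaming $\gamma'$ as $\gamma$ gives \eqref{vu}.

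The main obstacle, as in the $\mu$-version, is justifying the reduction to smooth data: one wants $d\mu$ (and the solutions) smooth so that $\Om_\eps\Subset\Om$ is a genuine sublevel set and the comparison principle and integration-by-parts identities apply cleanly, and then pass to the limit by approximation. Here this is slightly more delicate than before because the relevant $L^\infty$-bound \eqref{maes} was itself derived through the chain \eqref{*1}--\eqref{maes} that already used the smoothness assumption on $u$ and $d\mu$ in \eqref{cMA}; so I would be careful to state Proposition~\ref{prep-holder} for the approximating smooth problems first and then note that the constant $C$ depends only on $\gamma'$, $r$, $\Om$, and $\|v\|_{L^\infty(\Om,\omega^n)}$, hence survives the limit. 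Everything else — the comparison-principle step, Chebyshev, and the exponent bookkeeping around $\frac{\delta r}{1+\delta r}$ — is routine and identical in form to the proofs of Lemma~\ref{stab-holder-mu} and Proposition~\ref{prep-holder-mu}.
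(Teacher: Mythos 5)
Your proposal is substantially correct and follows the same route the paper intends: it explicitly says to repeat Lemma~\ref{stab-holder-mu} and Proposition~\ref{prep-holder-mu} with $d\mu$ replaced by $\omega^n$ and with \eqref{maes} supplying the $L^\infty$-estimate, and your comparison-principle step, Chebyshev step, and the exponent bookkeeping $\gamma'\le\delta(r-\gamma'r)\Leftrightarrow\gamma'\le\frac{\delta r}{1+\delta r}$ are exactly as in the paper.

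One place you should tighten is the stability step (the analogue of Lemma~\ref{stab-holder-mu}). As written you pose the auxiliary problem $(dd^cu_0)^k\wedge\omega^{n-k}=\chi_{\Om_\eps}d\mu$ on all of $\Om$ with $u_0|_{\p\Om}=0$ and then invoke ``\eqref{maes} applied on $\Om_\eps$'' to get $\sup_\Om|u_0|\le C|\Om_\eps|^\delta$. But \eqref{maes} as derived bounds the global solution by $C|\Om|^\delta$ — the De Giorgi iteration starts from $\Om_0=\{\hat u<0\}=\Om$, so nothing in that argument, applied verbatim to the global $u_0$, sees $|\Om_\eps|$. The clean fix is to solve the Dirichlet problem \emph{on} $\Om_\eps$: let $\tilde u_0$ satisfy $(dd^c\tilde u_0)^k\wedge\omega^{n-k}=d\mu$ in $\Om_\eps$, $\tilde u_0|_{\p\Om_\eps}=0$. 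By the mixed Monge--Amp\`ere inequality and the comparison principle, $\tilde u_0\le u_\eps-v\le 0$ on $\Om_\eps$, hence $\sup_\Om(v-u)\le\eps+\|\tilde u_0\|_{L^\infty(\Om_\eps)}$. Then rerun the derivation of \eqref{maes} on $\Om_\eps$: the test functions $f=-\hat u_s/\|\cdot\|_*$ vanish on $\p\Om_s\subset\Om_\eps$ and extend by zero to $W^*(\Om)$ with the same $\|\cdot\|_*$- and $L^1$-norms, so condition \eqref{cond-DKN} is inherited with the same constant; the classical Sobolev inequality and the Hessian--Sobolev inequality of \cite{AC20} likewise hold on the subdomain with constants controlled by those of $\Om$. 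This yields $\|\tilde u_0\|_{L^\infty(\Om_\eps)}\le C|\Om_\eps|^\delta$ with $C$ independent of $\eps$, which is precisely the ingredient you need. The remainder of your argument — Chebyshev to bound $|\Om_\eps|$, choosing $\eps=\|\max(v-u,0)\|_{L^r(\Om,\omega^n)}^{\gamma'}$, and noting both resulting exponents are $\ge\gamma'$ when $\gamma'\le\frac{\delta r}{1+\delta r}$ — is sound, as is your remark that the approximation by smooth data should be done first and the constants carried through the limit.
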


%\begin{proof} Note that for any $\eps>0$,
%	$$|\{v-u>\eps\}| 
%		 \leq  \eps^{-r}\int_{\{v-u>\eps\}}|v-u|^{r}   \leq \eps^{-r}\int_\Omega[\max(v-u,0)]^r.$$
%	Let $\eps:=\|\max(v-u,0)\|_{L^r(\Omega,\omega^n)}^{\gamma'}$, 
%	where $\gamma'$ is to be determined. 
%	{\color{blue}
%Then by changing the measure $d\mu$ in the proof of Lemma \ref{stab-holder-mu} to be the Lebesgue measure $\omega^n$}, we have
%	\begin{eqnarray}
%	\sup\limits_{\Omega}(v-u)
%	&\leq & \eps+C|\{v-u>\eps\}|^{\gamma\delta} \label{vu3} \\
%	&\leq & \|\max(v-u,0)\|_{L^r(\Omega,\omega^n)}^{\gamma'}+C\|\max(v-u,0)\|_{L^r(\Omega,\omega^n)}^{\gamma\delta(r-\gamma' r)}.  \nonumber
%	\end{eqnarray}
%Choose $\gamma'\leq \frac{\delta r}{1+\delta r}$, 
%\eqref{vu} follows from \eqref{vu3}.
%\end{proof}

\vskip 10pt

For any $\eps>0$, we denote $\Omega_\eps:=\{x\in \Omega|\, dist(x,\pom)>\eps\}$.
Let
\begin{eqnarray*}
	u_{\eps}(x)&:=&\sup\limits_{|\zeta|\leq \eps}u(x+\zeta),\ x\in\Omega_{\eps},\\
	\hat{u}_{\eps}(x)&:=&\bbint_{|\zeta-x|\leq \eps}u(\zeta)\,\omega^n,\ x\in\Omega_{\eps}.
\end{eqnarray*}
Since $u$ is plurisubharmonic in $\Omega$,  $u_\eps$ is a  plurisubharmonic function.
For the H\"older estimate,  it suffices to show there is a uniform constant $C>0$ such that $u_{\eps}-u\leq C\eps^{\alpha'}$ for some $\alpha'>0$. 
The link between $u_{\eps}$ and $\hat{u}_{\eps}$ is made by the following lemma.  
\begin{lem}(Lemma 4.2 in \cite{Ngu})\label{interchange}
	Given $\alpha\in (0,1)$, the following two conditions are equivalent. 
	
	(1) There exists $\eps_0$, $A>0$ such that for any $0<\eps\leq \eps_0$, 
	$$u_{\eps}-u\leq A\eps^\alpha\ \ \text{on\ $\Omega_\eps$}.$$
	
	(2) There exists $\eps_1$, $B>0$ such that for any $0<\eps\leq \eps_1$, 
	$$\hat{u}_{\eps}-u\leq B\eps^\alpha\ \ \text{on\ $\Omega_\eps$}.$$
\end{lem}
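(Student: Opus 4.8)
\textbf{Proof plan for Lemma \ref{interchange}.}

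The plan is to exploit the elementary comparison $u \le \hat u_\eps \le u_{\eps}$ on $\Omega_\eps$ (the first inequality by sub-mean-value property of plurisubharmonic functions, the second because the average over $B_\eps(x)$ of $u$ is bounded by the sup of $u$ over $B_\eps(x)$), together with a suitable interpolation of the averaging radius. The direction $(1)\Rightarrow(2)$ is immediate: from $u \le \hat u_\eps$ and $\hat u_\eps(x) = \bbint_{|\zeta-x|\le\eps} u(\zeta)\,\omega^n \le \sup_{|\zeta-x|\le \eps} u(\zeta) = u_\eps(x)$, condition (1) gives $\hat u_\eps - u \le u_\eps - u \le A\eps^\alpha$ on $\Omega_\eps$, so (2) holds with $B = A$, $\eps_1 = \eps_0$.

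For $(2)\Rightarrow(1)$, the standard trick is to iterate the averaging at geometrically decreasing scales. First I would record the monotonicity $\hat u_{\delta} \ge \hat u_{\delta'}$ for $\delta \le \delta'$ (again from sub-mean-value), and the fact that for plurisubharmonic $u$, the difference $\hat u_{2\delta} - \hat u_\delta$ is controlled pointwise by iterating hypothesis (2): $\hat u_{2\delta}(x) - \hat u_\delta(x) \le \bbint_{|\zeta-x|\le\delta}(\hat u_{2\delta}(\zeta) - u(\zeta))\,\omega^n \le \ldots$, which after a careful rescaling argument bounds a dyadic increment by $C\delta^\alpha$. Summing the telescoping series $\sum_{j\ge 0}(\hat u_{2^{j+1}\eps} - \hat u_{2^j\eps})$ over the finitely many scales up to a fixed $\eps_0$ then yields $\hat u_{\eps_0} - \hat u_\eps \le C\eps^\alpha$ (the geometric series in $\eps^\alpha$ converges). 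Since $\hat u_{\eps_0} - u$ is bounded on $\Omega_{\eps_0}$ by the $L^\infty$-bound on $u$ (enlarging the constant by absorbing $\eps_0^{-\alpha}$ times this bound), one gets $\hat u_\eps - u \le C\eps^\alpha$ with a uniform $C$; but more to the point one needs to pass from the average $\hat u_\eps$ to the sup $u_\eps$. Here I would use that $u_\eps(x) = \sup_{|\zeta-x|\le\eps}u(\zeta)$ and for each such $\zeta$, by sub-mean-value, $u(\zeta) \le \hat u_\eps(\zeta)$ (average over $B_\eps(\zeta)$), so $u_\eps(x) \le \sup_{|\zeta-x|\le\eps}\hat u_\eps(\zeta) \le \hat u_{2\eps}(x) + C\eps^\alpha$ by comparing $\hat u_\eps(\zeta)$ to $\hat u_{2\eps}(x)$ via a further averaging estimate; combined with $\hat u_{2\eps} - u \le C\eps^\alpha$ this closes the loop.

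The main obstacle is the quantitative control of the difference $\hat u_{2\delta} - \hat u_\delta$ (equivalently, comparing averages at nearby points and scales) purely in terms of hypothesis (2): one must be careful that the sub-mean-value inequality only gives one-sided control, so the telescoping must be arranged so that all the error terms have the correct sign, and the rescaling $\delta \mapsto 2\delta$ must be tracked so the constant does not blow up over the $\sim \log(\eps_0/\eps)$ dyadic steps — it does not, because each step contributes $C(2^j\eps)^\alpha$ and $\sum_j 2^{j\alpha}(2^{-\text{(number of steps)}})^\alpha$ telescopes to $C\eps_0^\alpha$-type bounds, but writing this cleanly requires the elementary but slightly delicate averaging lemma that for $x \in \Omega_{2\delta}$ one has $\hat u_\delta(x) - \hat u_{2\delta}(x) \le C\,\delta^{\alpha}$ whenever (2) holds, whose proof is an application of Fubini together with $u(\zeta) \le \hat u_\delta(\zeta)$ inside the integral. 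Since this is precisely Lemma 4.2 of \cite{Ngu}, I would either cite it directly or reproduce only this averaging step in detail, leaving the telescoping summation as routine.
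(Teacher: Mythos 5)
The direction $(1)\Rightarrow(2)$ is handled correctly: $u\le\hat u_\eps\le u_\eps$ on $\Omega_\eps$ by the sub-mean-value property and the trivial bound of an average by the supremum over the same ball, so $(1)$ gives $(2)$ with $B=A$, $\eps_1=\eps_0$. The converse direction, however, has a genuine gap at exactly the point where it would have to become nontrivial. The step on which you hinge the argument,
$$u_\eps(x)\le\sup_{|\zeta-x|\le\eps}\hat u_\eps(\zeta)\le\hat u_{2\eps}(x)+C\eps^\alpha,$$
is not a consequence of any elementary ``further averaging estimate.'' For a negative plurisubharmonic $u$ and $\zeta\in B_\eps(x)$ one has $B_\eps(\zeta)\subset B_{2\eps}(x)$, and since $u\le 0$ the inclusion gives $\int_{B_\eps(\zeta)}u\ge\int_{B_{2\eps}(x)}u$, hence $\hat u_\eps(\zeta)\ge 2^{2n}\hat u_{2\eps}(x)$; the unconditional comparison runs in the \emph{opposite} direction, and there is no reason for the supremum of the small-scale averages over $B_\eps(x)$ to lie below $\hat u_{2\eps}(x)$ up to $O(\eps^\alpha)$. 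If instead you try to justify the middle inequality using hypothesis $(2)$ itself, namely $\hat u_\eps(\zeta)\le u(\zeta)+B\eps^\alpha$, you only recover $\sup_{|\zeta-x|\le\eps}\hat u_\eps(\zeta)\le u_\eps(x)+B\eps^\alpha$, which is circular.

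The telescoping of dyadic increments $\hat u_{2^{j+1}\eps}-\hat u_{2^j\eps}$ does not repair this. Each increment is nonnegative because $r\mapsto\hat u_r(x)$ is increasing for subharmonic $u$, and the ``averaging step'' you isolate, the bound $\hat u_{2\delta}(x)-\hat u_{\delta}(x)\le B(2\delta)^\alpha$, is an immediate consequence of $(2)$ combined with $u\le\hat u_\delta$; it carries no new information and a telescope of such terms only returns a bound on $\hat u_{\eps_0}-\hat u_\eps$, i.e.\ on quantities already controlled by $(2)$, and of size $O(\eps_0^\alpha)$ rather than $O(\eps^\alpha)$. The actual content of $(2)\Rightarrow(1)$ is an interpolation estimate that bounds the supremum of a bounded subharmonic function over a half-radius ball in terms of a solid mean at the larger radius, via the Poisson kernel or the Riesz--Jensen representation of $u-\hat u_r$; this Harnack-type step is what makes the lemma nontrivial and it is not captured by, nor reducible to, the sub-mean-value manipulations in your sketch. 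Since the paper itself invokes this result as Lemma 4.2 of \cite{Ngu} without reproof, citing it directly (as you suggest at the end) is fine, but the contingency plan of ``reproducing only the averaging step'' would not yield a proof.
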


The following estimate is a generalization of Lemma 4.3 in \cite{Ngu}.

\begin{lem}\label{lapalace-control}
	Assume $u\in W^{2, r}(\Omega)$ with $r\geq 1$. Then
	for $\eps>0$ small enough, we have 
	\begin{equation} \label{L54}
	\left[\int_{\Omega_{\eps}}|\hat{u}_{\eps}-u|^r\,\omega^n\right]^{\frac{1}{r}}\leq C(n,r)\|\triangle u\|_{L^r(\Omega,\omega^n)}\eps^2
	\end{equation}
	where $C(n,r)>0$ is a uniform constant. 
\end{lem}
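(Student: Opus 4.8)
The plan is to prove the inequality \eqref{L54} by reducing to a pointwise representation of $\hat u_\eps - u$ in terms of the Laplacian, then applying the integral Minkowski inequality. The starting point is the classical sub-mean-value formula: for a $C^2$ function $u$, one has
$$\hat u_\eps(x) - u(x) = \bbint_{|\zeta-x|\le \eps}(u(\zeta)-u(x))\,\omega^n(\zeta),$$
and writing $u(\zeta)-u(x)$ via Taylor expansion along the segment, the first-order term integrates to zero by symmetry of the ball, leaving a second-order remainder. After a change of variables $\zeta = x+\eps y$, $|y|\le 1$, and an elementary computation (in real dimension $2n$, using the explicit Newtonian-type kernel that appears when one integrates the Hessian over a ball), one arrives at a formula of the shape
$$\hat u_\eps(x) - u(x) = \eps^2 \int_{|y|\le 1} K(y)\,\triangle u(x+\eps y)\,\omega^{2n}(y)$$
for a fixed integrable kernel $K$ depending only on $n$, possibly after first establishing it for smooth $u$ and extending by density in $W^{2,r}$.

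**Next I would** take the $L^r(\Omega_\eps)$ norm of both sides and apply Minkowski's integral inequality to pull the norm inside the $y$-integral:
$$\left[\int_{\Omega_\eps}|\hat u_\eps - u|^r\,\omega^n\right]^{1/r} \le \eps^2 \int_{|y|\le 1}|K(y)|\left[\int_{\Omega_\eps}|\triangle u(x+\eps y)|^r\,\omega^n(x)\right]^{1/r}\,\omega^{2n}(y).$$
Since $x\mapsto x+\eps y$ is a translation and $\Omega_\eps + \eps y \subset \Omega$ for $|y|\le 1$, the inner integral is bounded by $\|\triangle u\|_{L^r(\Omega,\omega^n)}^r$, uniformly in $y$. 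Pulling that out and using $\int_{|y|\le1}|K(y)|\,\omega^{2n}(y) = C(n,r) < \infty$ (finiteness of the kernel integral being the only analytic input about $K$) yields \eqref{L54}. The constant depends only on $n$ and $r$, as claimed; in fact one expects it to depend only on $n$.

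**The main obstacle** I expect is getting the second-order integral representation cleanly, i.e. verifying that averaging the Hessian of $u$ over the ball produces a kernel $K$ with $K\in L^1(B_1)$ and with the scaling $\eps^2$ correctly factored out; this is where one must be careful that the ball in $\mathbb C^n\cong\mathbb R^{2n}$ is the Euclidean ball, that the first-order term genuinely cancels, and that the argument passes from $C^2$ (or smooth) functions to general $u\in W^{2,r}$ by approximation — one smooths $u$, applies the identity, and passes to the limit using continuity of translation in $L^r$ and of the mollification. Everything after the representation is a routine application of Minkowski's inequality and a translation-invariance bound, so no further difficulty is anticipated.
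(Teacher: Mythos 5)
Your argument is correct and is essentially the standard proof; note that the paper does not actually write out a proof of this lemma but simply cites it as a generalization of Lemma~4.3 in \cite{Ngu}, so you are supplying the argument from scratch. One small tightening is worth making in the first step. The Taylor remainder $\int_0^1(1-t)D^2u(x+t(\zeta-x))(\zeta-x,\zeta-x)\,dt$ involves the full Hessian evaluated along the segment, and averaging it over the ball does not \emph{pointwise} reduce to the Laplacian. The cleaner way to get your representation, which is what the phrase ``Newtonian-type kernel'' is implicitly invoking, is the distributional identity
$$\frac{\chi_{B_\eps}}{|B_\eps|}-\delta_0=\triangle G_\eps,\qquad G_\eps(z):=\frac{1}{|B_\eps|}\int_{B_\eps}\Phi(z-w)\,dw-\Phi(z),$$
where $\Phi$ is the fundamental solution of $\triangle$ in $\R^{2n}$. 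Because $\Phi$ is subharmonic, $G_\eps\ge 0$; because averaging a harmonic function reproduces its center value, $G_\eps\equiv 0$ outside $B_\eps$; and testing against $|z|^2/(4n)$ gives $\int G_\eps=\eps^2/(2(2n+2))$. Rescaling $G_\eps(z)=\eps^{2-2n}G_1(z/\eps)$ produces exactly your kernel $K=G_1$, with
$$\hat u_\eps(x)-u(x)=\bigl(G_\eps*\triangle u\bigr)(x)=\eps^2\int_{|y|\le 1}G_1(y)\,\triangle u(x+\eps y)\,dy,$$
valid for $u\in C^\infty$ and extended to $W^{2,r}$ by mollification using continuity of translation in $L^r$, as you indicate. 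From there your application of Minkowski's integral inequality and translation invariance, using $\Omega_\eps+\eps y\subset\Omega$ for $|y|\le 1$, gives the bound with $C(n,r)=\int_{B_1}G_1$, which in fact depends on $n$ alone, as you anticipated.
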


Note that the function $u_\eps$ is not globally defined on $\Omega$. 
However, by $\varphi\in C^{2\alpha}(\partial\Omega)$, 
there exist $k$-plurisubharmonic functions $\{\tilde u_\eps\}$ which decreases to $u$
as $\eps\to 0$ and satisfies \cite{GKZ}
\begin{equation}\label{barrier}
\begin{cases}
\tilde u_\eps=u+C\eps^\alpha & \text{in}\ \Omega\setminus \Omega_\eps,\\[4pt]
\hat u_\eps\leq\tilde u_\eps\leq \hat u_\eps+C\eps^\alpha& \text{in}\ \Omega_\eps ,
\end{cases}
\end{equation}
where the constant $C$ is independent of $\eps$. 
Then if $u\in W^{2, r}(\Omega)$, by choosing $v=\tilde{u}_{\eps}$,  
$\gamma'< \frac{\delta r}{1+\delta r}$ in Proposition \ref{prep-holder}, and using Lemma \ref{lapalace-control}, we have 
\begin{eqnarray}\label{HR}
\sup_{\Omega_\eps} (\hat u_\eps-u)&\leq& \sup_\Omega(\tilde u_\eps-u)+C\eps^\alpha \nonumber\\
&\leq& C\|\tilde u_\eps-u\|_{L^r}^{\gamma'}+C\eps^\alpha\\
&\leq & C\|\triangle u\|_{L^r(\Omega,\omega^n)}^{\gamma'}\eps^{2\gamma'}+C\eps^\alpha\nonumber.
\end{eqnarray}
Hence, once we have $u\in W^{2, r}$ for $r\geq 1$, it holds $u\in C^{\gamma'}$ for $\gamma'<\frac{\delta r}{1+\delta r}$, where $\delta=\frac{2\gamma p-\gamma-k-1}{(k+1-\gamma)p}$ and $p\le \frac{n(k+1)}{n-k}$($p\ge 1$ when $k=n$).

Finally, we show that under the assumption of Theorem \ref{main6}, it holds $u\in W^{2,1}(\Omega)$, i.e., $\triangle u$ has finite mass, and hence $u\in C^{\gamma'}$ for $\gamma'<\frac{\delta r}{1+\delta r}$. 
Let $\rho$ be the smooth solution to 
\begin{align*}
\begin{cases}
(dd^c\rho)^k\wedge\omega^{n-k}=\omega^n,\ \     &\text{in}\ \Om,  \\[4pt]
\rho=0,\ \    &\text{on}\ \p \Om.
\end{cases}
\end{align*}
We can choose $A>>1$ such that $dd^c(A\rho)\ge \omega$. Then by the generalized  Cauchy-Schwartz inequality,
\begin{eqnarray*}
	\int_{\Omega}dd^cu\wedge \omega^{n-1}
	&\le& \int_{\Omega}dd^cu\wedge [dd^c(A\rho)]^{n-1}\\
	&\le& A^{k-1}\left(\int_{\Om}(dd^c\rho)^k\wedge\omega^{n-k}\right)\cdot\left(\int_{\Om}(dd^cu)^k\wedge\omega^{n-k}\right)\leq C.
\end{eqnarray*}
Hence we finish the proof.

\begin{ex}
As an example, we consider the Dirichlet problem \eqref{k-Dir} with the measure $\mu_S$ defined by $\mu_S(E)=\cH^{2n-1}(E\cap S)$ for a real hypersurface $S\subset \C^{n}$ and $E\subset \Om$, where $\cH^{2n-1}(\cdot)$ is the $2n-1$-Hausdorff measure. By Stein-Tomas Restriction Theorem,
for all $f\in C^{\infty}_0(\C^n)$, there holds \cite{BS} $$\|f|_S\|_{L^2(S, \mu_S)}\le C\|f\|_{L^p(\Om)},\ \ \ 1\le p\le \frac{4n+2}{2n+3}.$$
Therefore, we can apply Theorem \ref{main6} with $\gamma=1$. This leads to 
$u\in C^{\gamma'}(\Om)$ for $$\gamma'<\frac{n+k+2}{(n+1)(k+2)}.$$
\end{ex}

\vskip 20pt

\end{document}